\def\sD{{\mathfrak D}}      
   \def\sH{{\mathfrak H}}   
   \def\sK{{\mathfrak K}}   \def\sL{{\mathfrak L}}
\def\sM{{\mathfrak M}}   \def\sN{{\mathfrak N}}
      \def\dC{{\mathbb C}}
      \def\dR{{\mathbb R}}
\def\cA{{\mathcal A}}      \def\cC{{\mathcal C}}
\def\cG{{\mathcal G}}   \def\cH{{\mathcal H}}   
      \def\cL{{\mathcal L}}
\def\cM{{\mathcal M}}   \def\cN{{\mathcal N}}   
\def\cP{{\mathcal P}}      
\def\cS{{\mathcal S}}      \def\cU{{\mathcal U}}
\def\cY{{\mathcal Y}}   
   \def\bB{{\mathbf B}}
\def\dim{{\rm dim\,}}
\def\ran{{\rm ran\,}}
\def\cran{{\rm \overline{ran}\,}}
\def\dom{{\rm dom\,}}
\def\cdom{{\rm \overline{dom}\,}}
\def\codim{{\rm codim\,}}
\def\uphar{{\upharpoonright\,}}
\def\f{\varphi}
\def\half{{\frac{1}{2}}}
\newtheorem{theorem}{Theorem}[section]
\newtheorem{lemma}[theorem]{Lemma}
\newtheorem{proposition}[theorem]{Proposition}
\newtheorem{corollary}[theorem]{Corollary}
\newtheorem{definition}[theorem]{Definition}
\newtheorem{remark}[theorem]{Remark}
\numberwithin{equation}{section}
\def\RE{{\rm Re\,}}
\def\IM{{\rm Im\,}}
\def\wt{\widetilde}
\def\wh{\widehat}
\def\ti{\widetilde}
\begin{document}
\title
[Compressions of extensions]
{Compressions of selfadjoint and maximal dissipative extensions of non-densely defined symmetric operators}

\author[Yu.M.~Arlinski\u{\i}]{Yu.M. Arlinski\u{\i}}
\address {Volodymyr Dahl East Ukrainian National University, Kyiv,
 Ukraine}

\email{yury.arlinskii@gmail.com}

\subjclass[2020]{47A20, 47A48, 47B25, 47B44}

\keywords{symmetric operator, selfadjoint operator, maximal dissipative operator, deficiency subspace, compression}
\dedicatory {Dedicated to the memory of Heinz Langer}
\begin{abstract}
Selfadjoint and maximal dissipative extensions of a non-densely defined symmetric operator $S$ in an infinite-dimensional separable Hilbert space are considered and their compressions on the subspace ${\rm \overline{dom}\,} S$ are studied. The main focus is on the case ${\rm codim\,}{\rm \overline{dom}\,}S=\infty$. 
 New properties of the characteristic functions of non-densely defined symmetric operators are
established.

\end{abstract}
\maketitle
\thispagestyle{empty}
\section{Introduction}
{\textit{Notations}}.
We use the following notations. The Banach space of all bounded operators acting between Hilbert spaces $H_1$ and $H_2$ is denoted by $\bB(H_1,H_2)$ and $\bB(H):=\bB(H,H)$.
The symbols $\dom T$, $\ran T$, $\ker T$ denote
the domain, range and kernel of a linear operator $T$, respectively,
$\cdom T$ and $\cran T$ denote the closure of the domain and the range of $T$.
\textit{Throughout the paper the word "subspace" means a closed linear manifold}.
If $\sL$ is a subspace, the orthogonal
projection onto $\sL$ is denoted by $P_\sL.$ The identity operator in a Hilbert space $H$ is denoted by $I_H$ and sometimes by $I$.
By $\cL^\perp$ we denote the orthogonal complement to the linear manifold $\cL$.
The notation $T\uphar \cN$ means the restriction of a linear operator $T$ to the linear manifold $\cN\subset\dom T$.

The open upper/lower half-plane of the complex plane $\dC$ are denoted by $\dC_\pm \!:=\! \{ z\!\in\!\dC: \IM z \!\gtrless\! 0\}$ and
$\dR_+:=(0,+\infty).$
If $Z$ is a contraction, then $D_Z:=(I-Z^*Z)^\half$ and $\sD_Z:=\cran D_Z.$
If $B$ is a linear operator and $\cL$ is a linear manifold, then
$
B^{-1}\{\cL\}:=\{h\in\dom B: Bh\in\cL\}.
$
\vskip 0.3 cm

Let $\cH$ be an infinite-dimensional complex separable Hilbert space,
 let $\cH_0$ be a subspace (closed linear manifold) of $\cH$ and let $P_{\cH_0}$ be the orthogonal projection in $\cH$ onto $\cH_0$. If $T$ is a linear operator in $\cH$, then the operator
$$T_0:=P_{\cH_0}T\uphar(\dom T\cap\cH_0)$$
is called the compression of $T$. If $T$ is unbounded, then it is possible that $\dom T\cap\cH_0=\{0\}$ but if $\dom T$ is dense in $\cH$ and if $\codim\cH_0<\infty$, i.e.,
$\dim (\cH\ominus\cH_0)<\infty$, then (see \cite[Lemma 2.1]{GoKr}) the linear manifold $\dom T \cap\cH_0$ is dense in $\cH_0.$
Moreover, it is established by Stenger \cite{Stenger} that if the operator $T$ is selfadjoint in $\cH$ and $\codim \cH_0< \infty$, then the compression $T_0$ is a selfadjoint operator in $\cH_0$.

Later Stenger's result has been generalized by Nudelman  \cite[Theorem 2.2]{Nud} for the case of any maximal dissipative operator.
Recall that a linear operator $T$ in a Hilbert space $\cH$ is called \textit{dissipative} if $\IM (Tf,f)\ge 0$ $\forall f\in\dom T$ and \textit{maximal dissipative} if it is dissipative and has no dissipative extensions without exit from $\cH$.
In \cite{ADW, ACD}  Stenger's and Nudelman's statements have been extended for selfadjoint and maximal dissipative linear relations in Hilbert and Kre\u{\i}n spaces.

Dijksma and Langer \cite{DL2017, DL2018, DL2020, DL2020b} and  Mogilevski\u{\i} \cite{Mog2019, Mog2020} studied compressions of selfadjoint exit space extensions via the Kre\u{\i}n formula for generalized (compressed) resolvents \cite{Krein1944,Krein1946,KrL1971,DM,BHS2020}.

The main goal of the present paper is to study compressions of selfadjoint and maximal dissipative extensions of a non-densely defined symmetric operators via von-Neumann's \cite{AG}, Na\u{\i}mark's \cite{Naimark3}, Krasnosel'ski\u{\i}'s \cite{krasno} and Shtraus' \cite{Straus1968} formulas. More precisely, let
$S$ be a non-densely defined closed symmetric operator
 in an infinite-dimensional complex separable Hilbert space $\cH$ and let $\wt S$ be a maximal dissipative or a selfadjoint (if the deficiency indices are equal) extension of $S$ in $\cH$. Set $\cH_0:=\cdom S$. We are interested in the compressions
$$\wt S_0:=P_{\cH_0}\wt S\uphar(\dom \wt S\cap\cH_0),\; \wt S_{*0}:=P_{\cH_0}\wt S^*\uphar(\dom \wt S\cap\cH_0).$$
If  $S_0:=P_{\cH_0}S$, then $S_0$ is a symmetric and densely defined operator in $\cH_0$, $\wt S_0\supseteq S_0$ and $\wt S_{*0}\supseteq S_0$.
Clearly, if $\wt S$ is a selfadjoint extension of $S$, then $\wt S_0=\wt S_{*0}$ is a symmetric (selfadjoint when $\codim \cH_0<\infty$) in $\cH_0$  and if $\wt S$ is maximal dissipative, then $\wt S_0$ and $-\wt S_{*0}$ are dissipative (maximal dissipative when $\codim \cH_0<\infty$) in $\cH_0$ and, moreover, $\left<\wt S_0,\wt S_{*0}\right>$  is a \textit{dual pair}:
$$(\wt S_0 f, g)=(f,\wt S_{*0}g)\;\;\forall f\in\dom\wt S_0,\;\forall g\in\dom\wt S_{*0}.$$
We additionally suppose that \textit{the operator $S_0$ is closed} (such a symmetric operator $S$ is called \textit{regular} \cite{Shmulyan70, ArlBelTsek2011}). In the case $\codim \cH_0<\infty$ it is established in \cite{Shmulyan70} that $S$ is regular, for abstract examples of regular symmetric operators  in the case $\codim \cH_0<\infty$ see e.g. \cite[Proposition 2.2]{Arl_arxiv2024}. When the selfadjoint (maximal dissipative) extension $\wt S$ of a regular symmetric operator possesses the property that \textit{$P_{\cH_0}\wt S$ is closed} (such extensions are called \textit{regular} \cite{ArlBelTsek2011}), it is established in \cite[Theorem 2.5.7, Theorem 4.1.10]{ArlBelTsek2011} that also in case $\codim \cH_0=\infty$ the compression $\wt S_0$ is a selfadjoint (maximal dissipative) extension of $S_0$ in $\cH_0$.

Let  $\sN_\lambda$ ($\lambda\in\dC\setminus\dR)$ be the deficiency subspace of a symmetric $S$. Set $\sL_\lambda:=P_{\sN_\lambda}\cH_0^\perp.$ Then the operator $V_{\lambda}:P_{\sN_{\lambda}}f\mapsto
P_{\sN_{\bar \lambda}}f, $ $f\in\cH_0^\perp $ is isometric \cite{krasno}. It is proved in \cite{Shmulyan70} that $S$ is regular iff $\sL_\lambda$ is closed at least for one non-real $\lambda$.

There is a one-to-one correspondence between symmetric (dissipative) extensions of a closed non-densely defined symmetric operators $S$ and admissible isometric (contractive if $\IM \lambda>0$) operators  $\sN_{\lambda}\supseteq\dom U\stackrel{U}\longrightarrow\sN_{\bar\lambda}$ given by the formulas
\begin{equation}\label{feb7}
\left\{\begin{array}{l}
\dom \wt S=\dom S\dot+(I-U)\dom U,\\
\wt S(f_S+\f_\lambda-U\f_\lambda)=Sf_S+\lambda\f_\lambda-\bar\lambda U\f_\lambda,\; \; f_S\in\dom S,\;\f_\lambda\in\dom U.
\end{array}\right.
\end{equation}
The admissibility means (see \cite[Theorem 1]{Naimark3}, \cite{krasno, Straus1968}) that
$(U-V_\lambda)f\ne 0 \;\;\forall f\in\dom U\cap(\sL_\lambda\setminus\{0\}).$
Moreover, the operator $\wt S$ in \eqref{feb7} is selfadjoint (maximal dissipative) if and only if $U$ is an admissible unitary acting from $\sN_\lambda$ onto $\sN_{\bar\lambda}$
(admissible contraction with $\dom U=\sN_\lambda$).

Set $\sN_\lambda':=\sN_\lambda\ominus\sL_\lambda$. Then $\sN'_\lambda\subset\cH_0$ and $\sN'_\lambda=\cH_0\ominus(S_0-\bar\lambda I_{\cH_0})\dom S_0$ \cite{krasno}, i.e., $\sN'_\lambda$ is the deficiency subspace of densely defined in $\cH_0$ symmetric operator $S_0$. The subspaces $\sN'_\lambda$ are called the \textit{semi-deficiency subspaces} of $S$ \cite{krasno}.

Further for concreteness we will use $\lambda=i$. It is shown that if
a contractive admissible operator $U\in\bB(\sN_i,\sN_{-i})$ is represented as the block-operator matrix
\[
U=
\begin{bmatrix} U_{11} &U_{12} \cr
U_{21}& U_{22}\end{bmatrix}:\begin{array}{l}\sL_{i}\\ \oplus\\
\sN'_i\end{array}\to \begin{array}{l}\sL_{-i}\\ \oplus\\
\sN'_{-i}\end{array},
\]
then $\dom\wt S_0=\dom S_0\dot +(I-U_0)\dom U_0$, $\dom\wt S_{*0}=\dom S_0\dot +(I-U_{*0})\dom U_{*0}$, where $U_0$ and $U_{*0}$ are the following contractions
\[
\begin{array}{l}
 \dom U_0=\left\{\f_i'\in\sN_i':U_{12}\f_{i}'\in\ran (U_{11}-V_i)\right\},\;U_0\f_i'=(U_{22}-U_{21}(U_{11}-V_i)^{-1}U_{12})\f_i',\\[2mm]
\dom U_{*0}=\left\{\f_{-i}'\in\sN_{-i}':U_{21}^*\f_{-i}'\in\ran (U_{11}^*-V_{-i})\right\},\;U_{*0}\f_{-i}'=U_{22}^*-U_{12}^*(U_{11}^*-V_{-i})^{-1}U_{21}^*\f_{-i}'.
\end{array}
\]
Moreover,
\begin{itemize}
\item the pair $\left< U_0, U_{*0}\right>$ forms a dual pair, i.e., the equality
\[
(U_0f,g)=(f, U_{*0}g)\;\;\forall f\in\dom U_,\;\forall g\in\dom U_{*0}
\]
holds, 
\item if $U$ is unitary, then $U_0$ and $U_{*0}$ are isometries and $U_{*0}=U^{-1}_0$ and the selfadjointness of $\wt S_0$ is equivalent to the inclusions
\begin{equation}\label{apr14a}
\ran U_{12}\subset\ran (U_{11}-V_i)\;\&\;\ran U_{21}^*\subset\ran (U_{11}^*-V_{-i});
\end{equation}
\item the following are equivalent:
\begin{enumerate}
\def\labelenumi{\rm (\roman{enumi})}
\item both operators $\wt S_0$ and $-\wt S_{*0}$ are maximal dissipative in $\cH_0$,
\item $\wt S_{*0}=(\wt S_0)^*$,
\item $\dom U_0=\sN_i'$,\; $\dom U_{*0}=\sN_{-i}'$
\item the inclusions in \eqref{apr14a}
hold;
\end{enumerate}
\item if $\dim(\dom S)^\perp<\infty$, then $\dom U_0=\sN'_i$ and $\dom U_{*0}=\sN'_{-i}$, i.e., $\wt S_0$ and $-\wt S_{*0}$ are maximal dissipative operators in $\cH_0$ and $(\wt S_0)^*=\wt S_{*0}$ in $\cH_0$.
\end{itemize}
In the case $\dim(\dom S)^\perp=\infty$ we prove that
\begin{enumerate}
\item
for any closed symmetric $\wt S_0$ in $\cH_0$ such that $\wt S_0\supseteq S_0$ there exist selfadjoint extensions $\wt S$ of $S$ in $\cH$ whose compressions on $\cH_0$  coincide with $\wt S_0$,
\item if a dual pair $\left<\wt S_0,\wt S_{*0}\right>$ of closed extensions of $S_0$ in $\cH_0$ is such that $\wt S_0$ and $-\wt S_{*0}$ are dissipative operators and the equalities
\[
\begin{array}{l}
\{f\in\dom \wt S_0:\IM(\wt S_0f, f)=0\} 
=
\{g\in\dom\wt S_{*0}:\IM(\wt S_{*0}g,g)=0\}\\[2mm]
=\dom\wt S_0\cap\dom \wt S_{*0} 
\end{array}
\]
 hold, then
there exist maximal dissipative extensions $\wt S$ of $S$ such that their compressions $P_{\cH_0}\wt S\uphar(\dom \wt S\cap\cH_0)$ and $P_{\cH_0}\wt S^*\uphar(\dom \wt S^*\cap\cH_0)$ coincide with $\wt S_0$ and $\wt S_{*0}$, respectively.
\end{enumerate}
Observe that the above equalities are valid if both $\wt S_0$ and $-\wt S_{*0}$ are maximal dissipative
(see \cite{Kuzhel},
\cite[Lemma 3.3]{ArlCAOT2023}).

Our proof depends mainly (see Proposition \ref{mar14a}, Theorem \ref{bef25a}, Theorem \ref{bef29ab}, Theorem \ref{mar11aaa}, Theorem \ref{juni18a}) on the properties of the contraction
$$Y:=P_{\sL_{-i}}UV_{-i}\uphar\sL_{-i}.$$
In terms of $Y$ the admissibility of a contraction $U\in\bB(\sN_i,\sN_{-i})$ is equivalent to the equality $\ker (I-Y)=\{0\}$, for admissible \textit{unitary} $U$ the selfadjointness of $\wt S_0$ (or the validity of inclusions in \eqref{apr14a}) is equivalent to the  equalities
 \begin{equation}\label{apr16a}
\ran (I-Y)=\ran (I-Y^*)=\ran\left(I-\half(Y+Y^*)\right)^\half,
\end{equation}
and the regularity of $\wt S$ holds if and only if
\begin{equation}\label{juni30aa}
\ran (I-Y)=\sL_{-i}.
\end{equation}
If $U\in\bB(\sN_i,\sN_{-i})$ is an admissible contraction, then 
$\wt S$ is a regular $\Longleftrightarrow$
 $\wt S^*$ is a regular $\Longleftrightarrow $
the equality in \eqref{juni30aa} holds.
Besides, the equalities in \eqref{apr16a} are sufficient for the equality $(\wt S_0)^*=\wt S_{*0}$.
Clearly, if  $\dim\sL_{-i}(=\dim (\dom S)^\perp)<\infty$, then \eqref{juni30aa} is valid.

 In our study we essentially use  parameterizations of $2\times 2$ block operator contractive and unitary matrices, see \cite{AG1982,DKW,FoFra1984,ShYa}.

We establish some new properties of the characteristic functions of a closed symmetric operator $S$ defined by A.V.~Shtraus \cite{Straus1968} in the form

\begin{equation}\label{kbdinh}
\dC_+\ni z\stackrel{C_\lambda^{S}(z)}\longrightarrow \bB(\sN_{\lambda},\sN_{\bar\lambda}),\; C_\lambda^{S}(z):=(\wt S_z-\lambda I)(\wt S_z-\bar\lambda I)^{-1}\uphar\sN_{\lambda},
\end{equation}
where the operator $\wt S_z$ is the maximal dissipative extension:
 \begin{equation}\label{slamb}
\dom \wt S_z=\dom S+\sN_z,\; \wt S_z(f_S+\f_z)=Sf_S+z\f_z,\; f_S\in\dom S,\;\f_z\in\sN_z.
\end{equation}
We call such operators the \textit{Shtraus extension}. From \eqref{kbdinh} and \eqref{slamb} it follows that
\[
\dom \wt S_z=\dom S\dot+(I-C_\lambda^{S}(z))\sN_\lambda.
\]
We show that for a non-densely defined symmetric operator the equality  $(\wt S_z)_0=(\wt S_0)_z $ is valid and the limit (non-tangential to the real axis)
\[
\lim\limits_{z\to\infty,z\in\dC_+}C_\lambda^{S}(z)f=V_{\lambda}f\;\;\forall f\in\sL_{\lambda}
\]
holds.
Moreover, using the results related to the compressions of maximal dissipative extensions (Section \ref{apr11c}), we show (see Proposition \ref{apr10bb}) that the characteristic function $C_\lambda^{S_0}(z):\sN_\lambda'\to\sN_{\bar\lambda}'$ of the operator $S_0$ is the Schur complement of the operator matrix
$$C_\lambda^{S}(z)-V_\lambda P_{\sL_{\lambda}} : \begin{array}{l}\sL_{\lambda}\\ \oplus\\
\sN'_\lambda\end{array}\to \begin{array}{l}\sL_{\bar\lambda}\\ \oplus\\
\sN'_{\bar\lambda}\end{array}.$$

Let $S$ be a densely defined closed symmetric operator in the Hilbert space $\cH$.  Assume that $\cH$ is a subspace of a Hilbert space $\sH$, $\cH_1:=\sH\ominus\cH,$
$ \dim\cH_1=\infty$. Then in $\sH$ the operator $S$ is non-densely defined and regular symmetric, the deficiency subspace $\sN_\lambda$ of $S$ in $\cH$ is the semi-deficiency subspace of $S$ in $\sH$, $\sL_\lambda= \cH_1$ and $V_\lambda=I_{\cH_1}$ for all $\lambda\in \dC\setminus\dR$. Therefore, results of Sections \ref{mar16}, \ref{apr11b}, \ref{apr11c} can be applied for selfadjoint and maximal dissipative extensions of $S$ in $\sH$ and their compressions on $\cH$.
 In particular, relying on Theorem \ref{bef29ab} and Theorem \ref{mar11aaa}, in Theorem \ref{aug06a} it is shown for symmetric $S$ with infinite deficiency indices the existence of a selfadjoint/ maximal dissipative extensions $\wt S$ in $\sH$ of the second kind \cite{Naimark3}, i.e., $\dom \wt S\cap\cH=\dom S$ (respectively, $\dom \wt S\cap\cH=\dom\wt S^*\cap\cH=\dom S$) and having additional property $\dom \wt S\cap\cH_1=\{0\}$ (respectively $\dom \wt S\cap\cH_1=\dom\wt S^*\cap\cH_1=\{0\}$).

\section{Symmetric and maximal dissipative operators}
\subsection{Symmetric operators}

Recall that a complex number $\lambda$ is a point of regular type of the linear operator $T$ \cite{AG} if there exists a positive number $k(\lambda)$ such that
\[
||(T-\lambda I)f||\ge k(\lambda)||f||\;\forall f\in\dom T.
\]
If $T$ is a closed linear operator and $\ker (T-\lambda I)=\{0\}$, then the linear manifold $\ran (T-\lambda I)$ is a subspace in $\sH$ if and only if the number $\lambda$ is a point of regular type of $T$. We denote by $\wh\rho(T)$ the set of all points of regular type (the field of regularity) of the operator $T$.

Recall that a linear operator $S$ in a Hilbert space $\cH$ is called symmetric (or Hermitian) if $\IM (Sf,f)=0$ for all $f\in\dom S$. If $\dom S$ is dense, then $S$ is symmetric iff $S\subseteq S^*$.

Let $S$ be a closed symmetric operator in $\cH$. Then the set $\dC\setminus\dR\subseteq\wh\rho(S)$.  
 Set
$$\sM_\lambda:=\ran (S-\lambda I),\;\sN_\lambda:=\sM_{\bar \lambda}^\perp.$$
The numbers $n_\pm=\dim \sN_\lambda,\; \lambda\in\dC_\pm $
are called the deficiency indices (defect numbers) of $S$ \cite{AG,krasno}.

If $S$ is densely defined, then due to J.~von Neumann's results \cite{AG}
\begin{enumerate}
\item
the domain of the adjoint operator $S^*$ for each $\lambda\in\dC\setminus\dR$ admits the direct decomposition
$\dom S^*=\dom S\dot+\sN_\lambda\dot+\sN_{\bar\lambda};$
\item $S$ admits selfadjoint extensions in $\cH$ if and only if the deficiency indices of $S$ are equal; moreover, fix $\lambda\in\dC\setminus\dR$, then the formulas in
\eqref{feb7}
give a one-to-one
correspondence between all unitary operators $U$, acting from $\sN_\lambda$ onto $\sN_{\bar\lambda}$ and all selfadjoint extensions of $S$.
\end{enumerate}
\subsubsection{Non-densely defined symmetric operators}
If $\cdom S\ne \cH$, then $\dom S\cap\sN_\lambda=\{0\}$.  
 Set
\begin{equation}\label{jjpyfx}
\cH_0:=\cdom S,\;\sL:=\cH\ominus\cH_0,\; S_0:=P_{\cH_0}S.
\end{equation}
It is proved in (\cite[Lemma 2]{krasno}, \cite[Theorem 7]{Naimark3}) that
\begin{equation}\label{KRASn}
\sL\cap\sM_{\lambda}=\{0\}\;\;\forall \lambda\in\dC\setminus\dR
\end{equation}
and (see \cite[Lemma 2, Theorem 8]{krasno})
\begin{equation}\label{xthdjy}
(S-\bar\lambda I)(S-\lambda I)^{-1}P_{\sM_\lambda}h=P_{\sM_{\bar\lambda}}h\quad\mbox{for all}\quad
h\in\sL,\; \lambda\in\dC\setminus\dR.
\end{equation}
The adjoint of $S$ is the linear relation (multi-valued operator). But if
the operator $S$ we consider as a closed operator acting from $\cH_0$ into $\cH$, then since $\dom S$ is dense in $\cH_0$, there exists the adjoint operator $S^*$ acting from $\cH$ into $\cH_0$, $S^*$ is closed and densely defined in $\cH$.
Clearly, $S^*_0=S^*\uphar(\cH_0\cap\dom S^*)$ and (see \cite{ArlBelTsek2011})
\begin{equation}\label{bef15a}
S^*\f_\lambda=P_{\cH_0}\f_\lambda,\; \f_\lambda\in\sN_\lambda.
\end{equation}
The domain $\dom S^*$ admits the decomposition $\dom S^*=\dom S+\sN_\lambda+\sN_{\bar\lambda}$ \cite[Theorem 1]{Shmulyan70} but $\dom S, $ $\sN_\lambda$, $\sN_{\bar\lambda}$ are not linearly independent \cite{krasno, Naimark3}.
Moreover,
\begin{equation}\label{gthtctx1}
\begin{array}{l}
\left\{\begin{array}{l}f_S+\f_\lambda+\f_{\bar\lambda}=0\\f_S\in\dom S,\;\f_\lambda\in\sN_\lambda,\;\f_{\bar\lambda}\in\sN_{\bar\lambda}
\end{array}\right.
\Longleftrightarrow
\left\{\begin{array}{l}\f_\lambda=P_{\sN_\lambda}h,\;
\f_{\bar\lambda}=-P_{\sN_{\bar\lambda}}h,\\
f_S=2i\IM\lambda(S-\lambda I)^{-1} P_{\sM_\lambda}h,\;
h\in\sL\end{array}\right..
\end{array}
\end{equation}
 Therefore
\begin{equation}\label{rhfc11}
\sN_{\bar\lambda}\cap(\dom S\dot+\sN_\lambda)=P_{\sN_{\bar\lambda}}\sL.
\end{equation}
Set
\begin{equation}\label{gjlghjcn}
\sL_\lambda:=P_{\sN_\lambda}\sL,\; \sN_\lambda':=\sN_\lambda\ominus\sL_\lambda
\end{equation}
 Then (see \cite{krasno}) the operator $V_\lambda:\sL_\lambda\mapsto\sL_{\bar\lambda}$ defined by the equality
\begin{equation}\label{forbis}
V_\lambda P_{\sN_\lambda}h=P_{\sN_{\bar \lambda}} h,\; h\in\sL,\;\lambda \in\dC\setminus\dR
\end{equation}
is an isometry. Clearly,
$ V_{\bar\lambda}=V^{-1}_\lambda=V^*_{\lambda}\uphar\sL_\lambda$ and it follows from \eqref{xthdjy} and \eqref{forbis} that
\[
\begin{array}{l}
(I-V_\lambda)P_{\sN_\lambda}h=P_{\sN_\lambda}h-P_{\sN_{\bar\lambda}}h=P_{\sM_\lambda}h-P_{\sM_{\bar\lambda}}h \\[2mm]
=P_{\sM_\lambda}h-(S-\bar\lambda I)(S-\lambda I)^{-1}P_{\sM_\lambda}h
=2i\IM\lambda (S-\lambda I)^{-1}P_{\sM_\lambda}h\in\dom S\;\;\forall h\in\sL.
\end{array}
\]
The subspace $\sN_\lambda'$ lies in $\cH_0$ and  is called the semi-deficiency subspace of $S$. It is the deficiency subspace of the densely defined in $\cH_0$ symmetric operator $S_0$ (see \eqref{jjpyfx}), i.e.,  $\ker (S^*-\lambda I)\cap \cH_0=\sN_\lambda'.$ and the numbers
$
n_{\pm}'=\dim\sN'_\lambda,\;\lambda\in\dC_\pm
$
are called the semi-deficiency indices of $S$.
It is proved in \cite[Corollary to Theorem 2]{krasno} that the linear manifolds $\sL_{\lambda}$ for $\lambda\in\dC\setminus\dR$ are either all closed (i.e. are subspaces) or none of them is closed. The theorem below is established in \cite[Theorem 3]{Shmulyan70} (see also \cite[Theorem 2.4.1]{ArlBelTsek2011}).
\begin{theorem}\label{ivekmzy}
 The following are equivalent:
 \def\labelenumi{\rm (\roman{enumi})}
\begin{enumerate}
\def\labelenumi{\rm (\roman{enumi})}
\item the operator $S_0$ is closed;
\item the linear manifold $\sL_\lambda$ is closed (i.e., $\sL_\lambda$ is a subspace) for at least one (then for all) non-real $\lambda$;
\end{enumerate}
\end{theorem}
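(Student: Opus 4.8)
The plan is to reduce the closedness of the densely defined symmetric operator $S_0$ in $\cH_0$ to a statement about the range of $S_0-\lambda I$, and then to translate that range condition into the subspace geometry governing $\sL_\lambda=P_{\sN_\lambda}\sL$. Two elementary facts come first. Since $S_0$ is densely defined and symmetric in $\cH_0$, every non-real $\lambda$ is a point of regular type: from $\IM((S_0-\lambda I)f,f)=-\IM\lambda\,\|f\|^2$ and Cauchy--Schwarz one gets $\|(S_0-\lambda I)f\|\ge|\IM\lambda|\,\|f\|$, so $(S_0-\lambda I)^{-1}$ is a bounded operator on $\ran(S_0-\lambda I)$. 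As a bounded operator is closed precisely when its domain is, and as $S_0$ is closed iff $S_0-\lambda I$ is closed iff $(S_0-\lambda I)^{-1}$ is closed, I obtain for each non-real $\lambda$ that $S_0$ is closed iff $\ran(S_0-\lambda I)$ is a subspace. Second, for $f\in\dom S_0=\dom S\subset\cH_0$ one has $(S_0-\lambda I)f=P_{\cH_0}Sf-\lambda P_{\cH_0}f=P_{\cH_0}(S-\lambda I)f$, whence
\[
\ran(S_0-\lambda I)=P_{\cH_0}\sM_\lambda .
\]
(Here $\sM_\lambda$ is a subspace because $\lambda\in\wh\rho(S)$ and $\ker(S-\lambda I)=\{0\}$, by the regular-type criterion recalled above.)

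The geometric core is the following lemma, which I would prove from scratch: for closed subspaces $M,L$ of a Hilbert space, $P_{L^\perp}M$ is closed iff $M+L$ is closed. Both directions are short. If $M+L$ is closed and $P_{L^\perp}m_n\to z\in L^\perp$, then $P_{L^\perp}m_n=m_n-P_Lm_n\in M+L$, so $z\in M+L$; writing $z=m+\ell$ and applying $P_{L^\perp}$ gives $z=P_{L^\perp}m\in P_{L^\perp}M$. Conversely, if $P_{L^\perp}M$ is closed and $m_n+\ell_n\to w$, then $P_{L^\perp}m_n=P_{L^\perp}(m_n+\ell_n)\to P_{L^\perp}w$, so $P_{L^\perp}w=P_{L^\perp}m$ for some $m\in M$, and then $w-m\in L$, i.e. $w\in M+L$.

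To finish, I would apply the lemma twice. With $L=\sL$ (so $L^\perp=\cH_0$) it gives that $\ran(S_0-\mu I)=P_{\cH_0}\sM_\mu$ is a subspace iff $\sM_\mu+\sL$ is closed, for any non-real $\mu$. With $L^\perp=\sN_\lambda$, hence $L=\sN_\lambda^\perp=\sM_{\bar\lambda}$, and $M=\sL$, it gives that $\sL_\lambda=P_{\sN_\lambda}\sL$ is closed iff $\sL+\sM_{\bar\lambda}$ is closed. Taking $\mu=\bar\lambda$ in the first statement and comparing yields, for each fixed non-real $\lambda$, the chain
\[
S_0 \text{ closed}\iff \ran(S_0-\bar\lambda I)\text{ closed}\iff \sM_{\bar\lambda}+\sL\text{ closed}\iff \sL_\lambda\text{ closed}.
\]
Since every right-hand condition is equivalent to the single, $\lambda$-free statement that $S_0$ is closed, it follows that $\sL_\lambda$ is closed for one non-real $\lambda$ iff for all, which is exactly the equivalence (i)$\iff$(ii) (and recovers the Corollary-to-Theorem-2 ``all-or-none'' assertion for free).

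The only genuinely non-formal ingredient is the geometric lemma, combined with the observation that the closedness of $S_0$ is encoded in the closedness of the range $\ran(S_0-\lambda I)=P_{\cH_0}\sM_\lambda$. I do not expect a serious obstacle beyond bookkeeping the identification $\sN_\lambda^\perp=\sM_{\bar\lambda}$ and keeping straight which subspace is being projected onto which; the two halves of the lemma are each a two-line argument, so the apparent subtlety of ``sum of closed subspaces need not be closed'' never has to be confronted directly.
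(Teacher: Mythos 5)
Your proof is correct. Note that the paper itself does not prove Theorem \ref{ivekmzy} at all --- it is quoted from Shmul'yan \cite[Theorem 3]{Shmulyan70} (see also \cite[Theorem 2.4.1]{ArlBelTsek2011}) --- so there is no in-paper argument to compare against; your proposal supplies a complete, self-contained proof. Each step checks out: $S_0$ is densely defined and symmetric in $\cH_0$, so $\|(S_0-\lambda I)f\|\ge|\IM\lambda|\,\|f\|$ and hence $S_0$ is closed iff $\ran(S_0-\lambda I)=P_{\cH_0}\sM_\lambda$ is closed; the geometric lemma (for closed subspaces $M,L$, the manifold $P_{L^\perp}M$ is closed iff $M+L$ is closed) is proved correctly in both directions; and the two applications of it, with $(M,L)=(\sM_{\bar\lambda},\sL)$ and $(M,L)=(\sL,\sM_{\bar\lambda})$, both reduce to the single symmetric condition ``$\sM_{\bar\lambda}+\sL$ closed,'' which yields the equivalence with the $\lambda$-free statement ``$S_0$ closed'' and hence the all-or-none claim. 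The only hypotheses you use --- that $\sM_{\bar\lambda}$ is a subspace (since non-real points are of regular type for the closed symmetric $S$) and that $\sL^\perp=\cH_0$, $\sN_\lambda^\perp=\sM_{\bar\lambda}$ --- are all available, so the argument stands as written.
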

Clearly, (1) if $\dim\sL<\infty$, then $S_0$ is closed, (2) the operator $S_0$ is selfadjoint operator in $\cH_0$ if and only if
$\sN_\lambda=\sL_\lambda=P_{\sN_\lambda}\sL\;\;\forall\lambda\in\dC\setminus\dR$
and
$
\dom S^*=\dom S\dot+\sN_\lambda.$

It is established in \cite[Lemma 3]{Naimark3} (see also \cite[Theorem 5]{krasno}) that von Neumann's result related to the existence of selfadjoint extensions of a non-densely defined symmetric operator $S$ holds true, i.e., $S$ admits selfadjoint extensions in $\cH$ if and only if the deficiency indices of $S$ are equal and the description of all selfadjoint extensions is of the form \eqref{feb7} with such unitary operator $U$ from $\sN_\lambda$ onto $\sN_{\bar\lambda}$ that
\[
(U-V_\lambda)\f_\lambda\ne 0\;\;\forall \f_\lambda\in\sL_\lambda\setminus\{0\},
\]
where $\sL_\lambda$ and $V_\lambda$ are defined in \eqref{gjlghjcn} and \eqref{forbis}, respectively.
\begin{definition} \label{bef13a} \cite{ArlBelTsek2011}
\begin{enumerate}
\item The operator $S$ is called regular if $S_0$ is closed.
\item An extension $\wt S$ of $S$ is called regular if the operator $P_{\cH_0}\wt S$ is closed.
\end{enumerate}
\end{definition}
From the equality $S^*\uphar\dom\wt S= P_{\cH_0}\wt S$ it follows that
a selfadjoint extension $\wt S$ of $S$ is regular if and only if the domain $\dom \wt S$ is a subspace in the Hilbert space
$\cH^+_{S^*}:$
\begin{equation}\label{aug12a}
\cH^+_{S^*}:=\dom S^*, \; (f,g)_{\cH^+_{S^*}}=(f,g)+(S^*f,S^*g),\; f,g\in\dom S^*.
\end{equation}

\subsection{Maximal dissipative extensions of symmetric operators and their Cayley transforms}
It is well known that  (see e.g. \cite{Ka,Kuzhel, {Phillips1959}, Straus1968})
\begin{itemize}
\item a densely defined dissipative operator $T$ is maximal dissipative  if and only if $-T^*$ is dissipative; 
\item the resolvent set $\rho(T)$ of a maximal dissipative operator $T$ contains the open lower half-plane and 
the resolvent admits the estimate
$||(T-\lambda I)^{-1}||\le(|\IM \lambda|)^{-1}\;\; \forall\lambda\in\dC_-$ ($\IM \lambda<0$).
\end{itemize}
If $T$ is a maximal dissipative operator in $\cH$, then its  Cayley transform
\begin{equation}\label{rtkbnh}
Y_\lambda=(T-\lambda I)(T-\bar \lambda I )^{-1}\Longleftrightarrow \left\{\begin{array}{l} \psi=(T-\bar \lambda I )f\\
Y_\lambda \psi=(T-\lambda I)f\end{array}\right. f\in\dom T,\; \lambda\in\dC_+
\end{equation}
is a contraction defined on $\cH$ (see \cite{AG}, \cite{Straus1968}). It follows from \eqref{rtkbnh} that
\[
I-Y_\lambda =2i\IM \lambda (T-\bar\lambda I)^{-1},\;\ran (I-Y_\lambda)=\dom T,
\]
\[
||D_{Y_\lambda}\psi||^2=4\IM \lambda\,\IM (Tf,f),\; \psi=(T-\bar\lambda I)f,\;f\in\dom T,
\]
\begin{equation}\label{cghzo}
Y_\lambda^*=(T^*-\bar\lambda I)(T^*-\lambda I)^{-1}\Longleftrightarrow
\left\{\begin{array}{l} \phi=(T^*-\lambda I )g\\
Y_\lambda^* \phi=(T^*-\bar \lambda I)g\end{array}\right., \; g\in\dom T^*.
\end{equation}
From \eqref{rtkbnh} and \eqref{cghzo} one gets
\begin{equation}\label{jhfnyjt}
\left\{\begin{array}{l}f=(I-Y_\lambda)h\\
Tf=(\lambda I-\bar\lambda Y_\lambda)h\end{array}\right.,\;  \left\{\begin{array}{l}g=(I-Y_\lambda^*)\psi\\
T^*g=(\bar \lambda I-\lambda Y_\lambda^*)\psi\end{array}\right.,\; h, \psi\in\cH.
\end{equation}
According to \cite[Theorem 1.1, Theorem 1.2]{Straus1968}
if
$S$ is a closed symmetric operator and $\lambda\in\dC_+$, then the formulas
\begin{equation}\label{ljghfci}
\left\{ \begin{array}{l}\dom T=\dom S\dot+(I-M)\sN_\lambda\\
T(f_S+(I-M)\f_\lambda)=Sf_S+\lambda\f_\lambda-\bar\lambda M\f_\lambda\qquad (f_S\in\dom S,\;\f_\lambda\in\sN_\lambda)
\end{array}\right.
\end{equation}
establish a one-to-one correspondence between all maximal dissipative extensions $T$ of $S$ and all contractions
$M\in\bB(\sN_\lambda,\sN_{\bar\lambda})$ such that
\[
(M-V_\lambda)\psi_\lambda\ne 0\;\; \forall \psi_\lambda\in\sL_\lambda\setminus\{0\},
\]
where $V_\lambda$ is defined by \eqref{forbis}.
The adjoint operator $T^*$ can be described as follows
\begin{equation}\label{ljghfci2}
\left\{ \begin{array}{l}\dom T^*=\dom S\dot+(I-M^*)\sN_{\bar\lambda}\\
T^*(f_S+(I-M ^*)\f_{\bar \lambda})=Sf_S+\bar\lambda\f_{\bar\lambda}-\lambda M ^*\f_{\bar\lambda}\qquad (f_S\in\dom S,\;\f_{\bar\lambda}\in\sN_{\bar\lambda})
\end{array}\right..
\end{equation}
Observe that if $T$ is a maximal dissipative extension of $S$ of the form \eqref{ljghfci}, then the Cayley transform \eqref{rtkbnh}
is the orthogonal sum
\[
Y_\lambda=U_\lambda \oplus M,\;U_\lambda=(S-\lambda I)(S-\bar\lambda I)^{-1}:\sM_{\bar\lambda}\to\sM,\; M:=Y_\lambda\uphar\sN_\lambda:\sN_\lambda\to\sN_{\bar\lambda}.
\]
Since $U_\lambda$ is an isometry, we get that
\begin{equation}\label{defoperl1}
D^2_{Y_\lambda}=D^2_{M}P_{\sN_\lambda},\;D^2_{Y^*_\lambda}=D^2_{M^*}P_{\sN_{\bar\lambda}}.
\end{equation}
If $T$ is a maximal dissipative extension of a densely defined symmetri8c operator $S$, then von Neumann's formula for $\dom S^*$, \eqref{ljghfci}, \eqref{ljghfci2},  and \eqref{defoperl1} yield for the Cayley transform \eqref{rtkbnh} of $T$ that
\[
\dom (I-Y_\lambda)\cap\sD_{Y^*_\lambda}=\dom (I-Y^*_\lambda)\cap\sD_{Y_\lambda}=\{0\}.
\]
\begin{remark}\label{mar9bb} Let $T$ be an extension of a regular non densely defined symmetric operator $S$ in $\cH$. Suppose that
$\rho(T)\ne\emptyset$ of $T$ and $T^*$ is an extension of $S$ as well.
Then for each $\lambda\in\dC\setminus\dR$ such that $\bar\lambda\in\rho(T)$ the operators $T$ and $T^*$ admit the representations \eqref{ljghfci} and \eqref{ljghfci2}, respectively, with $M=(T-\lambda I)(T-\bar\lambda I)^{-1}\uphar\sN_\lambda\in\bB(\sN_\lambda,\sN_{\bar\lambda}).$  Moreover, the following are equivalent:
(see \cite[Theorem 2.5.2, Theorem 4.1.5, Remark 4.1.6]{ArlBelTsek2011})
\begin{enumerate}
\def\labelenumi{\rm (\roman{enumi})}
\item the operator $P_{\cH_0}T$ is closed,
\item the operator $P_{\cH_0}T^*$ is closed,
\item the linear manifold $(M-V_\lambda)\sL_\lambda$ is a subspace,
\item the linear manifold $(M^*-V_{\bar\lambda})\sL_{\bar\lambda}$ is a subspace.
\end{enumerate}
\end{remark}
\section{The Shtraus extensions and characteristic functions of symmetric operators}\label{jul11}
Let $z\in \wh\rho(S)$. Consider the  Shtraus extension $\wt S_z$ of $S$ \cite{Straus1968} (see \eqref{slamb}):
\[
\dom \wt S_z=\dom S\dot+\sN_z,\; \wt S_z(f_S+\f_z)=Sf_S+z\f_z,\; f_S\in\dom S,\;\f_z\in\sN_z.
\]
The operator $\wt S_z$ is maximal dissipative for $ z\in\dC_+$, maximal accumulative for $z\in\dC_-$, and selfadjoint for $z\in\wh\rho(S)\cap\dR$ \cite{Straus1968}.
Moreover,
$\wt S^*_z=\wt S_{\bar z}.$
\subsection{The Cayley tranform of the Shtraus extensions}
\begin{proposition}\label{apr7a}Let $S$ be a closed symmetric operator.
Let $z\in\dC\setminus\dR$ and let
$$ Y_z=(\wt S_z-z I)(\wt S_z-\bar z I)^{-1}$$
be the Cayley transform of $\wt S_z$. Then
\begin{enumerate}
\item
$ Y_z$ is the partial isometry of the form
\[
 Y_z=U_zP_{\sM_{\bar z}},\; U_z=(S-z I)(S-\bar z I)^{-1},\;Y_z\uphar\sN_z=0,\; Y^*_z=Y_{\bar z},
\]
and
\begin{equation}\label{defoperl11}
\begin{array}{l}
D_{Y_z}=P_{\sN_z},\;\sD_{Y_z}=\sN_z,\; \dim \sD_{Y_z}=n_+\\[2mm]
D_{Y_z^*}=P_{\sN_{\bar z}},\;\sD_{ Y_z^*}=\sN_{\bar z},\; \dim\sD_{Y_z^*}=n_-.
\end{array}
\end{equation}
\item For densely defined  $S$ the relations
\begin{equation}\label{30jul}
\ran (I-Y_z)\cap\sD_{Y^*_z}=\ran (I-Y_z^*)\cap\sD_{Y_z}=\{0\}
\end{equation}
 hold.
\item If $S$ is non-densely defined, then
\begin{equation}\label{mar16bb}
\begin{array}{l}
\dom\wt S_z\cap\dom \wt S^*_z=\dom S\dot+\sL_z=\dom S\dot+\sL_{\bar z},\\[2mm]
\dom \wt S_z\cap\ran D_{Y_z^*}=\ran (I-Y_z)\cap\ran D_{Y^*_z}=\sL_{\bar z},\\[2mm]
 \dom \wt S^*_z\cap\ran D_{Y_z}=\ran (I- Y^*_z)\cap\ran D_{ Y_z}=\sL_{z}.
\end{array}
\end{equation}
Moreover,
\begin{itemize}
\item if $S_0$ is selfadjoint ($n_\pm '=0$), then 
$$\sD_{ Y^*_z}\subset\ran (I- Y_z)\quad \mbox{and}\quad \sD_{ Y_z}\subset\ran (I- Y^*_z);$$
\item if $n'_+=0$, $n'_->0$, then
 $$\ran D_{Y_z}=\sD_{ Y_z}\subset\ran (I-Y^*_z),\;\ran D_{Y_z^*} \cap\ran (I- Y_z)=\sL_{\bar z};  $$
 \item if $n'_-=0$, $n'_+>0$, then
 $$\ran D_{Y^*_z}=\sD_{ Y^*_z}\subset\ran (I- Y_z),\; \ran D_{ Y_z}\cap\ran (I-Y^*_z)=\sL_{z}.  $$
\end{itemize}
In addition
\begin{equation}\label{apr8a}
(\wt S_z-\bar z I)^{-1}h=-\cfrac{1}{2i\IM z}\, P_{\sN_{\bar z}}h,
(Y_z-I)h=-P_{\sN_{\bar z}}h\;\;\forall h\in\sL
\end{equation}
and
\begin{equation}\label{apr9a}
(I-Y_z)^{-1}P_{\sN_{\bar z}}h=h,\; P_{\sN_z}(I-Y_z)^{-1}P_{\sN_{\bar z}}h=P_{\sN_z}h=V_{\bar z}P_{\sN_{\bar z}}h\;\;\forall h\in\sL.
\end{equation}
\end{enumerate}
\end{proposition}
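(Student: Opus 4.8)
The plan is to reduce every assertion to the explicit action of $\wt S_z$ on the decomposition $\dom\wt S_z=\dom S\dot+\sN_z$ together with the spectral geometry of $S$ recorded in \eqref{xthdjy}, \eqref{forbis}, \eqref{gthtctx1} and \eqref{rhfc11}. For part (1) I start from the defining relation \eqref{rtkbnh}: for $f=f_S+\f_z\in\dom\wt S_z$ and $\psi=(\wt S_z-\bar z I)f$ one computes $(\wt S_z-\bar z I)f=(S-\bar z I)f_S+(z-\bar z)\f_z$ and $(\wt S_z-z I)f=(S-z I)f_S$. Since $(S-\bar z I)f_S\in\sM_{\bar z}$ while $\f_z\in\sN_z=\sM_{\bar z}^\perp$, these are the orthogonal components of $\psi$, so $P_{\sM_{\bar z}}\psi=(S-\bar z I)f_S$ and $Y_z\psi=(S-z I)f_S=U_zP_{\sM_{\bar z}}\psi$; as $\bar z\in\rho(\wt S_z)$ this extends to all $\psi\in\cH$, giving $Y_z=U_zP_{\sM_{\bar z}}$ with $U_z$ isometric from $\sM_{\bar z}$ onto $\sM_z$. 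Hence $Y_z\uphar\sN_z=0$, $Y_z$ is a partial isometry with $Y_z^*Y_z=P_{\sM_{\bar z}}$ and $Y_zY_z^*=P_{\sM_z}$, so $D_{Y_z}^2=P_{\sN_z}$ forces $D_{Y_z}=P_{\sN_z}$, $\sD_{Y_z}=\sN_z$, and symmetrically for $Y_z^*$; this is \eqref{defoperl11}. The identity $Y_z^*=Y_{\bar z}$ comes from inserting $\wt S_z^*=\wt S_{\bar z}$ into \eqref{cghzo}.

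For part (2) I use $\ran(I-Y_z)=\dom\wt S_z=\dom S\dot+\sN_z$ and $\sD_{Y_z^*}=\sN_{\bar z}$. When $S$ is densely defined, von Neumann's decomposition $\dom S^*=\dom S\dot+\sN_z\dot+\sN_{\bar z}$ is a genuine direct sum, so $(\dom S\dot+\sN_z)\cap\sN_{\bar z}=\{0\}$, which is the first equality in \eqref{30jul}; the second is the same statement with $z$ and $\bar z$ interchanged.

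For part (3) the second and third lines of \eqref{mar16bb} are immediate: since $\dom\wt S_z=\ran(I-Y_z)$, $\ran D_{Y_z^*}=\sN_{\bar z}$ and $\ran D_{Y_z}=\sN_z$, they read $\dom\wt S_z\cap\sN_{\bar z}=\sL_{\bar z}$ and $\dom\wt S_z^*\cap\sN_z=\sL_z$, which are exactly \eqref{rhfc11} (for $\lambda=z$ and $\lambda=\bar z$). For the first line I intersect $\dom\wt S_z=\dom S+\sN_z$ with $\dom\wt S_z^*=\dom\wt S_{\bar z}=\dom S+\sN_{\bar z}$: if $x$ lies in both, writing $x=g_S+\f_{\bar z}$ and noting $\f_{\bar z}=x-g_S\in\sN_{\bar z}\cap(\dom S+\sN_z)=\sL_{\bar z}$ gives $x\in\dom S\dot+\sL_{\bar z}$, while the reverse inclusion holds because $\sL_{\bar z}\subseteq\dom\wt S_z$ (just shown) and $\sL_{\bar z}\subseteq\sN_{\bar z}\subseteq\dom\wt S_z^*$; the symmetric argument gives $\dom S\dot+\sL_z$. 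The three bulleted cases then follow for $z\in\dC_+$: $n'_+=0$ means $\sN_z=\sL_z$ and $n'_-=0$ means $\sN_{\bar z}=\sL_{\bar z}$, so the inclusions $\sL_{\bar z}\subseteq\dom\wt S_z=\ran(I-Y_z)$ and $\sL_z\subseteq\dom\wt S_z^*=\ran(I-Y_z^*)$ yield the stated $\sD_{Y_z^*}\subseteq\ran(I-Y_z)$ and $\sD_{Y_z}\subseteq\ran(I-Y_z^*)$; the case $z\in\dC_-$ is obtained by interchanging $z$ and $\bar z$.

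Finally, \eqref{apr8a} and \eqref{apr9a} are read off the computation already in hand: for $h\in\sL$ one has $Y_zh=U_zP_{\sM_{\bar z}}h=P_{\sM_z}h$ by \eqref{xthdjy}, hence $(I-Y_z)h=P_{\sN_{\bar z}}h$, and combining this with $I-Y_z=2i\IM z\,(\wt S_z-\bar z I)^{-1}$ gives the resolvent formula in \eqref{apr8a}. Since $I-Y_z$ is injective and $\sL_{\bar z}\subseteq\dom\wt S_z=\ran(I-Y_z)$, inverting $(I-Y_z)h=P_{\sN_{\bar z}}h$ yields $(I-Y_z)^{-1}P_{\sN_{\bar z}}h=h$, and $P_{\sN_z}h=V_{\bar z}P_{\sN_{\bar z}}h$ is the definition \eqref{forbis} of $V_{\bar z}$, proving \eqref{apr9a}. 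I expect the main care-point to be part (3): because $S$ is non-densely defined the manifolds $\dom S$, $\sN_z$, $\sN_{\bar z}$ are linearly dependent, so one cannot argue by direct sums as in part (2) and must instead use \eqref{gthtctx1}/\eqref{rhfc11} throughout, keeping careful track of which semi-deficiency subspace ($\sL_z$ or $\sL_{\bar z}$) each intersection produces and of the restriction $z\in\dC_+$ needed for the three special cases.
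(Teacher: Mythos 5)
Your proposal is correct and follows essentially the same route as the paper: part (1) via the orthogonal splitting $Y_z=U_zP_{\sM_{\bar z}}$ (the paper invokes \eqref{defoperl1} for this), part (2) via von Neumann's direct decomposition, part (3) via \eqref{gthtctx1}/\eqref{rhfc11}, and \eqref{apr8a}--\eqref{apr9a} by an explicit computation on $\sL$. One remark: your computation gives $(\wt S_z-\bar z I)^{-1}h=\tfrac{1}{2i\IM z}P_{\sN_{\bar z}}h$, whereas the printed first formula in \eqref{apr8a} carries a minus sign; your sign is the internally consistent one, since the printed second formula $(Y_z-I)h=-P_{\sN_{\bar z}}h$ combined with $I-Y_z=2i\IM z\,(\wt S_z-\bar z I)^{-1}$ forces the plus sign, and \eqref{apr9a} also requires it, so the discrepancy is a typo in the paper rather than an error on your part.
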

\begin{proof}
The equalities in \eqref{defoperl11} follow from \eqref{defoperl1}. Relations in \eqref{mar16bb}  are consequences of \eqref{gthtctx1} and \eqref{defoperl11}.
The von Neumann formula $\dom S^*=\dom S\dot +\sN_z\dot+\sN_{\bar z}$, equalities $\dom \wt S_z=\ran (I-Y_z)$, $\dom \wt S_{\bar z}=\ran (I-Y^*_z)$ and \eqref{defoperl11} imply \eqref{30jul}.

Let us proof \eqref{apr8a}. From the equality $\dom\wt S_z\cap\dom \wt S^*_z=\dom S\dot+\sL_{\bar z}$ it follows that for each $h\in \sL$ the vector
$P_{\sN_{\bar z}}h\in\sL_{\bar z}$ admits the unique  decomposition
\[
P_{\sN_{\bar z}}h=f_S+\f_z,\; f_S\in\dom S,\;\f_z\in\sN_{z}.
\]
Hence, from \eqref{gthtctx1} for $\lambda=\bar z$ we obtain that
\[
\f_z=-P_{\sN_z} h, \; f_S=-2i\IM z(S-\bar zI)^{-1}P_{\sM_{\bar z}}h.
\]
Therefore,
\[
(\wt S_z-\bar z I)P_{\sN_{\bar z}}h=(S-\bar zI)f_S+(z-\bar z)\f_z=-2i\IM z P_{\sM_{\bar z}}h-2i\IM zP_{\sN_z} h=-(2i\IM z) h.
\]
This and \eqref{rtkbnh} yield \eqref{apr8a}. Then \eqref{apr9a} follows from \eqref{apr8a}.
\end{proof}
\subsection{The characteristic function of a symmetric operator}

 Let $C^S_{\lambda}(z)$, $z,\lambda\in\dC_+$ be the characteristic functions  of the operator $S$ (see \eqref{kbdinh})
\[
\dC_+\ni z\to \bB(\sN_{\lambda},\sN_{\bar\lambda}):C_\lambda^{S}(z):=(\wt S_z-\lambda I)(\wt S_z-\bar\lambda I)^{-1}\uphar\sN_{\lambda},
\]
i.e.,
\begin{equation} \label{kbdinh12}
\dom\wt S_z=\dom S\dot+\sN_z=\dom S\dot+(I-C^S_{\lambda}(z))\sN_{\lambda}.
\end{equation}
Since $\sN_z\subset\dom S\dot+(I-C^S_{\lambda}(z))\sN_{\lambda}$, we have from \eqref{kbdinh12} that for each $\f\in\sN_z$ there exist unique $f_S\in\dom S$ and $\psi\in\sN_\lambda$ such that the equalities
\[
\left\{\begin{array}{l}\f=f_S+(I-\cC^S_\lambda(z))\psi\\
z\f=Sf_S+\lambda f-\bar\lambda\, \cC^S_\lambda (z) \psi
\end{array}\right.
\] 
hold. Consequently
\begin{equation}\label{pes11a}
\cC^S_\lambda (z)P_{\sN_\lambda}\f=\cfrac{z-\lambda}{z-\bar{\lambda}}\,P_{\sN_{\bar\lambda}}\f\;\;\forall \f\in\sN_z,\;\:z,\lambda\in\dC_+.
\end{equation}

It is proved in \cite{Straus1968} that
\begin{equation}\label{charfunc2}
\begin{array}{l}
C_\lambda^{S}(z)=\cfrac{z-\lambda}{2i\IM \lambda}P_{\sN_{\bar\lambda}}(\wt S_{\bar\lambda}-\lambda I)(\wt S_{\bar\lambda}-z I)^{-1}\uphar\sN_{\lambda}
=\cfrac{z-\lambda}{z-\bar\lambda}\,\cP_{z,\bar\lambda}\uphar\sN_{\lambda},\; z\in\dC_+,
\end{array}\end{equation}
where $\cP_{z,\bar\lambda}$ is a skew projector in $\cH$ onto $\sN_{\bar\lambda}$ corresponding to the direct decomposition $\cH=\sM_{z}\dot+\sN_{\bar\lambda},$
and
\begin{equation}\label{31jul}
||C_\lambda^{S}(z)||\le\left|\cfrac{z-\lambda}{z-\bar{\lambda}}\right|\;\; \forall z\in\dC_+.
\end{equation}

\begin{proposition}\label{charfunc11}
Let $\lambda\in\dC_+$. Then the characteristic function $C^{S}_{\lambda}(z)$ of the symmetric operator $S$ takes the form
\begin{equation} \label{snhf}
C^{S}_{\lambda}(z)=\frac{z-\lambda}{z-\bar\lambda}\,P_{\sN_{\bar\lambda}}\big (I- \frac{z-\lambda}{z-\bar\lambda}\, Y^*_{\lambda}\big)^{-1}\uphar\sN_{\lambda},
\;\forall z\in\dC_+
\end{equation}
where
$ Y_{\lambda}=(\wt S_{\lambda}-\lambda I)(\wt S_{\lambda}-\bar \lambda I)^{-1} =( S_{\lambda}-\lambda I)( S_{\lambda}-\bar \lambda I)^{-1}P_{\sM_{\bar\lambda}}
$
is the Cayley transform of $\wt S_{\lambda}.$

If $S$ is a non-densely defined closed symmetric operator, then
\begin{equation}\label{apr9bb}
\lim\limits_{z\to\infty,z\in\dC_+}C_\lambda^{S}(z)f=V_{\lambda}f\;\;\forall f\in\sL_{\lambda},
\end{equation}
where $\sL_{\lambda}$ and $V_{\lambda}$ are defined by \eqref{gjlghjcn} and \eqref{forbis}, respectively, and the limit is non-tangential to the real axis.
\end{proposition}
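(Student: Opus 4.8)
The statement has two parts: the closed formula \eqref{snhf}, which is a purely algebraic rewriting of Shtraus' representation \eqref{charfunc2}, and the boundary limit \eqref{apr9bb}, which is where the real work lies; I would treat them in that order. For \eqref{snhf}, set $A:=\wt S_{\bar\lambda}$ and $\mu:=\dfrac{z-\lambda}{z-\bar\lambda}$. By Proposition \ref{apr7a} one has $Y_\lambda^*=Y_{\bar\lambda}=(A-\bar\lambda I)(A-\lambda I)^{-1}$, and since $A$ is maximal accumulative, $\lambda,z\in\dC_+\subseteq\rho(A)$, so all resolvents below are bounded; moreover $|z-\bar\lambda|^2-|z-\lambda|^2=4\IM\lambda\,\IM z>0$ gives $|\mu|<1$, so $I-\mu Y_\lambda^*$ is boundedly invertible. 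The one computation needed is the elementary identity
\[
I-\mu Y_\lambda^*=\frac{2i\IM\lambda}{z-\bar\lambda}\,(A-zI)(A-\lambda I)^{-1},
\]
obtained from $1-\mu=\frac{2i\IM\lambda}{z-\bar\lambda}$ and $\mu\bar\lambda-\lambda=-\frac{2i\IM\lambda\,z}{z-\bar\lambda}$. Inverting and multiplying by $\mu$ collapses the scalar prefactors to $\frac{z-\lambda}{2i\IM\lambda}$, so that $\mu(I-\mu Y_\lambda^*)^{-1}=\frac{z-\lambda}{2i\IM\lambda}(A-\lambda I)(A-zI)^{-1}$; compressing by $P_{\sN_{\bar\lambda}}$ and restricting to $\sN_\lambda$ reproduces exactly \eqref{charfunc2}, which proves \eqref{snhf}.

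For the limit I would fix $f\in\sL_\lambda$ and pick $h\in\sL$ with $P_{\sN_\lambda}h=f$; then $V_\lambda f=P_{\sN_{\bar\lambda}}h$ by \eqref{forbis}, and $f=(I-Y_\lambda^*)h$ by \eqref{apr8a} taken at $\bar\lambda$, so in particular $f\in\ran(I-Y_\lambda^*)$. Substituting into \eqref{snhf} and using $(I-\mu Y_\lambda^*)^{-1}(I-Y_\lambda^*)=I-(1-\mu)(I-\mu Y_\lambda^*)^{-1}Y_\lambda^*$ gives
\[
C_\lambda^S(z)f=\mu\,V_\lambda f-\mu(1-\mu)\,P_{\sN_{\bar\lambda}}(I-\mu Y_\lambda^*)^{-1}Y_\lambda^* h .
\]
Since $\mu\to1$ as $z\to\infty$, the first term tends to $V_\lambda f$, and the whole problem reduces to showing that the second term vanishes.

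The crux, and the main obstacle, is the Abel/mean-ergodic statement $(1-\mu)(I-\mu Y_\lambda^*)^{-1}g\to0$ for a fixed vector $g$ (here $g=Y_\lambda^* h$) as $z\to\infty$ non-tangentially. I would exploit that $Y_\lambda^*$ is a contraction with $\ker(I-Y_\lambda^*)=\{0\}$ — indeed $I-Y_\lambda^*=-2i\IM\lambda\,(A-\lambda I)^{-1}$ is injective. For a contraction $\ker(I-Y_\lambda^*)=\ker(I-Y_\lambda)=(\ran(I-Y_\lambda^*))^\perp$, hence $\ran(I-Y_\lambda^*)$ is dense in $\cH$. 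The Möbius change of variable $z\mapsto\mu$ sends the non-tangential sector $\{z:\IM z\ge\delta|z|\}$ to a Stolz angle at $\mu=1$, i.e. $|1-\mu|\le C(1-|\mu|)$; combined with $\|(I-\mu Y_\lambda^*)^{-1}\|\le(1-|\mu|)^{-1}$ this yields the uniform bound $|1-\mu|\,\|(I-\mu Y_\lambda^*)^{-1}\|\le C$. On the dense set $\ran(I-Y_\lambda^*)$, for $g=(I-Y_\lambda^*)w$ one computes $(1-\mu)(I-\mu Y_\lambda^*)^{-1}(I-Y_\lambda^*)w=(1-\mu)w-(1-\mu)^2(I-\mu Y_\lambda^*)^{-1}Y_\lambda^* w\to0$, the last term being $O(1-|\mu|)$ by the non-tangential bound; a standard $\varepsilon/3$ approximation using the uniform bound extends the limit to all $g\in\cH$. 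Applying this with $g=Y_\lambda^* h$ and using $\|P_{\sN_{\bar\lambda}}\|\le1$ kills the error term, leaving $\lim_{z\to\infty}C_\lambda^S(z)f=V_\lambda f$. I expect the only genuinely delicate point to be this last paragraph — checking that the ergodic limit survives the complex, non-tangential approach $\mu\to1$ rather than merely a real approach $\mu\uparrow1$; the rest is bookkeeping.
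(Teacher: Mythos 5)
Your proof is correct and follows essentially the same route as the paper: \eqref{snhf} is obtained by rewriting the resolvents in \eqref{charfunc2} through the Cayley transform $Y_\lambda^*=Y_{\bar\lambda}$, and the limit \eqref{apr9bb} rests on the identity $f=(I-Y_\lambda^*)h$ for $h\in\sL$ together with the non-tangential convergence of $(I-\mu Y_\lambda^*)^{-1}$ applied to vectors of $\ran(I-Y_\lambda^*)$. The only difference is that you supply a complete Abel--ergodic justification of that convergence (the Stolz-angle bound $|1-\mu|\,\|(I-\mu Y_\lambda^*)^{-1}\|\le C$ combined with density of $\ran(I-Y_\lambda^*)$, which follows from $\ker(I-Y_\lambda)=\{0\}$), a step the paper dispatches with the bare assertion that $Y_\lambda^*$ is a contraction.
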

\begin{proof}
Since $\wt S_{\bar\lambda}=\wt S^*_{\bar\lambda}$ from \eqref{jhfnyjt} we get
\[\begin{array}{l}
(\wt S_{\bar{\lambda}}-\lambda I)=-2i\IM \lambda(I-Y^*_{\lambda}),\;
(\wt S_{\bar{\lambda}}-z I)=-(z-\bar\lambda)\left(I-\frac{z-\lambda}{z-\bar\lambda}Y^*_{\lambda}\right)\left(I-Y^*_{\lambda}\right)^{-1}.
\end{array}
\]
Now from \eqref{charfunc2} follows \eqref{snhf}.

Assume $S$ is a non-densely defined closed symmetric operator.
Because $Y_{\lambda}^*=Y_{\bar\lambda}$,
the equalities in \eqref{apr8a} and \eqref{apr9a} imply that $\sL_{\lambda}=P_{\sN_{\lambda}}(\dom S)^\perp\subset\ran (I-Y^*_{\lambda})$
and
$$(I-Y^*_{\lambda})^{-1}P_{\sN_{\lambda}}h=h\;\;\forall h\in  h\in\sL=(\dom S)^\perp .$$
Since $Y_{\lambda}^*$ is a contraction it follows that
\[
\lim\limits_{\xi\to 1,|\xi|<1}(I-\xi Y_{\lambda}^*)^{-1}P_{\sN_{\lambda}}h=(I-Y^*_{\lambda})^{-1}P_{\sN_{\lambda}}h=h\;\; \forall h\in\sL,
\]
where the limit is non-tangential to the unit circle. Hence and from \eqref{snhf} we conclude that \eqref{apr9bb} holds true.

\end{proof}
\begin{remark}\label{may31a}
Since $D_{Y_\lambda}= P_{\sN_\lambda}$, $D_{Y_\lambda^*}=P_{\sN_{\bar\lambda}},$ and $Y_\lambda\uphar\sD_{Y_\lambda}=0$, the function
\[
\Theta_{ Y_{\lambda}}(\zeta)=\zeta\,P_{\sN_{\bar\lambda}}\big (I- \zeta\, Y^*_{\lambda}\big)^{-1}\uphar\sN_{\lambda},\;|\zeta|<1
\]
is the  Sz.-Nagy--Foias characteristic function of $Y_{\lambda}$ (see \cite[Chapter VI, (1.1)]{SF}).

A connection between Sz.-Nagy -- Foias characteristic functions and the characteristic function of a densely defined closed symmetric operator with equal deficiency indices has been established in \cite{Kochubei} in the framework of a boundary value spaces approach.
\end{remark}
\begin{remark}\label{apr10a}
If $S$ is a maximal symmetric operator with $n_+>0,$ $ n_-=0$, then $\wt S_z=S^*$ and $\wt S^*_z=S$ for all $z\in\dC_+$. Hence for each $\lambda\in\dC_+$ one has
$$C^{S}_{\lambda}(z)=0:\sN_{\lambda}\to\{0\}\;\;\forall z\in\dC_+.$$
If $S$ is a maximal symmetric operator with $n_+=0,$ $ n_->0$, then $\wt S_z=S$ and $\wt S^*_z=S^*$ for all $z\in\dC_+$. It follows that for each $\lambda\in\dC_+$
$$C^{S}_{\lambda}(z)=0:\{0\}\to\sN_{\bar{\lambda}}\;\;\forall z\in\dC_+.$$
\end{remark}
\begin{remark}\label{domenik}
The concept of the characteristic function of a linear operator in a Hilbert space was first introduced by M.S.~Liv\v{s}ic in \cite{livsic} (see \cite{AG}).
For a densely defined symmetric operator with deficiency indices $\left<1,1\right>$ the Liv\v{s}ic characteristic function takes the form
\[
w(z)=\cfrac{z-i}{z+i}\,\,\cfrac{(\f_z, e_-)}{(\f_z,e_{+})},\; z\in\dC_+,
\]
where $e_{\pm}\in\sN_{\pm i}$, $||e_+||=||e_-||=1$, $\f_z\in\sN_z$.
Taking into account \eqref{pes11a} we conclude that $w(z)$ coincides with $C^{S}_{i}(z)$
($\lambda=i$). 

Further investigations of the  Liv\v{s}ic characteristic functions and their applications can be founded in \cite{maktsek, maktsek2022}.
\end{remark}
\begin{remark}\label{pes12a}
Let $\lambda=a+ib\in\dC_+$. Set
$
S_{a,b}:=b^{-1}(S-aI).
$ 
Then for deficiency subspaces of $S$ and $S_{a,b}$ one gets the equality
$ 
\sN_{\xi}(S_{a,b})=\sN_{a+b\xi}(S).
$ 
It follows from \eqref{pes11a} that
\[
C_\lambda^S(z)=C_i^{S_{a,b}}(b^{-1}(z-a)).
\]
\end{remark}

\section{Contractions and their defect operators} \label{apr2}
Here we consider properties of contractions  related to mutual location of certain linear manifolds connected with them. We apply results established here in our study of compressions in Sections \ref{apr11b} and \ref{apr11c}.
\begin{theorem}\label{mar14a}
Let $Y$ be a contraction in the Hilbert space $\sH$. Assume $\ker (I-Y)=\{0\}$.
Let $Y_R:=\half(Y+Y^*)$ be the real part of $Y$.
Then
\begin{enumerate}
\item the following equalities
\begin{equation}\label{mar15a}
\ran D_{Y^*}+\ran (I-Y)=\ran D_{Y}+\ran (I-Y^*)=\ran (I-Y_R)^\half
\end{equation}
hold, therefore, the inclusions
\[
\ran(I-Y), \;\ran (I-Y^*),\;\ran D_Y,\;\ran D_{Y^*}\subseteq\ran (I-Y_R)^\half
\]
are valid;
\item the following are equivalent:
\begin{enumerate}
\def\labelenumi{\rm (\roman{enumi})}
\item $\ran (I-Y)\cap\ran D_{Y^*}=\{0\}$,
\item $\ran (I-Y^*)\cap\ran D_{Y}=\{0\};$
\end{enumerate}
\item the equality
\[
\ran(I-Y^*)\cap\ran D_Y=(I-Y^*)(I-Y)^{-1}\left(\ran(I-Y)\cap\ran D_{Y^*}\right)
\]
 holds;
\item if
 $D_{Y^*}h\in \ran (I-Y)$, then
\[
\left\|-Y^*h+D_{Y}(I-Y)^{-1}D_{Y^*}h\right\|=||h||
\]
and
\[
D_{Y}\left(-Y^*h+D_{Y}(I-Y)^{-1}D_{Y^*}h\right)\in\ran (I-Y^*);
\]
\item the following are equivalent:
\begin{enumerate}
\def\labelenumi{\rm (\roman{enumi})}
\item 
$\ran D_{Y^*}\subseteq\ran (I-Y),$ 
\item 
$\ran(I-Y)=\ran (I-Y_R)^\half$; 
\end{enumerate}
\item the following are equivalent:
\begin{enumerate}
\def\labelenumi{\rm (\roman{enumi})}
\item
$\ran D_{Y}\subseteq\ran (I-Y^*),$ 
\item 
$\ran(I-Y^*)=\ran (I-Y_R)^\half;$ 
\end{enumerate}
\item the following are equivalent:
\begin{enumerate}
\def\labelenumi{\rm (\roman{enumi})}
\item
$\left\{\begin{array}{l}
\ran D_{Y^*}\subseteq\ran (I-Y)\\
\ran D_{Y}\subseteq\ran (I-Y^*)
\end{array}\right.,$
\item $\ran (I-Y^*)=\ran(I-Y)= \ran (I-Y_R)^\half;$
\end{enumerate}
\item the following are equivalent:
\begin{enumerate}
\item $\ran D^2_{Y}\subset\ran (I-Y^*)$,
\item $\ran (I-Y)\subseteq\ran (I-Y^*).$
\end{enumerate}
\end{enumerate}

\end{theorem}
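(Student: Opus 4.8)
The plan is to reduce all eight items to two algebraic identities, the standard intertwining relations for a contraction, and a single operator-range tool. Multiplying out and using $D_Y^2=I-Y^*Y$, $D_{Y^*}^2=I-YY^*$, one checks
\[
2(I-Y_R)=(I-Y^*)(I-Y)+D_Y^2=(I-Y)(I-Y^*)+D_{Y^*}^2;
\]
since $Y$ is a contraction, $Y_R\le I$, so $I-Y_R\ge 0$ and $(I-Y_R)^\half$ is well defined. I will also use $YD_Y=D_{Y^*}Y$ and its adjoint $D_YY^*=Y^*D_{Y^*}$, and the fact that for a contraction $\ker(I-Y)=\ker(I-Y^*)$; thus the hypothesis gives $\ker(I-Y^*)=\{0\}$, so $(I-Y)^{-1}$ and $(I-Y^*)^{-1}$ are well-defined (possibly unbounded) operators on the respective ranges. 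The tool is the operator-range identity $\ran A+\ran B=\ran(AA^*+BB^*)^\half$ (apply it to the operator $R\colon(x,y)\mapsto Ax+By$ and use $\ran R=\ran(RR^*)^\half$). Taking $(A,B)=(D_{Y^*},I-Y)$ and the second displayed identity gives $\ran D_{Y^*}+\ran(I-Y)=\ran(2(I-Y_R))^\half=\ran(I-Y_R)^\half$, while $(A,B)=(D_Y,I-Y^*)$ with the first identity gives $\ran D_Y+\ran(I-Y^*)=\ran(I-Y_R)^\half$; this is \eqref{mar15a}, and the four inclusions follow since each listed range is one of the summands. Items (5) and (6) drop out at once: if $\ran D_{Y^*}\subseteq\ran(I-Y)$ the first sum collapses to $\ran(I-Y)=\ran(I-Y_R)^\half$, and conversely this equality together with $\ran D_{Y^*}\subseteq\ran(I-Y_R)^\half$ from (1) gives the inclusion; (6) is the mirror case and (7) is their conjunction.

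The engine is item (4), which I would prove by direct computation. Assuming $D_{Y^*}h\in\ran(I-Y)$, set $\xi:=(I-Y)^{-1}D_{Y^*}h$, so $(I-Y)\xi=D_{Y^*}h$ and $Y\xi=\xi-D_{Y^*}h$, and let $z:=-Y^*h+D_Y\xi$. Expanding $\|z\|^2$ with $\|Y^*h\|^2=\|h\|^2-\|D_{Y^*}h\|^2$, $\|D_Y\xi\|^2=\|\xi\|^2-\|Y\xi\|^2$, and $\langle Y^*h,D_Y\xi\rangle=\langle D_{Y^*}h,Y\xi\rangle$ (from $YD_Y=D_{Y^*}Y$), all $\xi$-dependent terms cancel and leave $\|z\|^2=\|h\|^2$. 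For the range claim, $D_Yz=-Y^*D_{Y^*}h+(I-Y^*Y)\xi=(I-Y^*)\xi$, using $D_YY^*=Y^*D_{Y^*}$ and $Y\xi=\xi-D_{Y^*}h$; hence $D_Yz=(I-Y^*)(I-Y)^{-1}D_{Y^*}h\in\ran(I-Y^*)$. The same computation shows that the map appearing in item (3) acts by $(I-Y^*)(I-Y)^{-1}D_{Y^*}h=D_Yz\in\ran D_Y$. Interchanging $Y\leftrightarrow Y^*$ gives the dual: if $D_Yh'\in\ran(I-Y^*)$, then for the corresponding $z'$ one has $(I-Y)(I-Y^*)^{-1}D_Yh'=D_{Y^*}z'\in\ran(I-Y)\cap\ran D_{Y^*}$.

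For item (3), the inclusion $\supseteq$ is precisely the first half of (4): every $w=D_{Y^*}h\in\ran(I-Y)\cap\ran D_{Y^*}$ is carried by $(I-Y^*)(I-Y)^{-1}$ into $\ran(I-Y^*)\cap\ran D_Y$. For $\subseteq$, take $v\in\ran(I-Y^*)\cap\ran D_Y$ and write $v=D_Yh'$; then $D_Yh'\in\ran(I-Y^*)$, so the dual statement yields $w:=(I-Y)(I-Y^*)^{-1}v\in\ran(I-Y)\cap\ran D_{Y^*}$, and $(I-Y^*)(I-Y)^{-1}w=(I-Y^*)(I-Y^*)^{-1}v=v$, so $v$ lies in the asserted image. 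Item (2) is then immediate: $(I-Y^*)(I-Y)^{-1}$ is injective on $\ran(I-Y)$ because $\ker(I-Y^*)=\{0\}$, so by (3) its image $\ran(I-Y^*)\cap\ran D_Y$ equals $\{0\}$ exactly when $\ran(I-Y)\cap\ran D_{Y^*}$ does.

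Item (8) I would treat separately and purely algebraically, via the identity $D_Y^2=(I-Y)+(I-Y^*)Y$. Since $\ran\big((I-Y^*)Y\big)\subseteq\ran(I-Y^*)$, this gives $\ran D_Y^2\subseteq\ran(I-Y)+\ran(I-Y^*)$ and, rewriting $I-Y=D_Y^2-(I-Y^*)Y$, also $\ran(I-Y)\subseteq\ran D_Y^2+\ran(I-Y^*)$; feeding in (ii) collapses the first right-hand side to $\ran(I-Y^*)$ and yields (i), while feeding in (i) collapses the second and yields (ii). The hard part, where care is needed, is item (4)/(3): one must produce the norm-preserving vector $z$ and compute $D_Yz$ exactly, and then recognize that the reverse inclusion in (3) comes not from inverting the same construction but from its $Y\leftrightarrow Y^*$ dual, all while keeping in mind that $(I-Y)^{-1}$ and $(I-Y^*)^{-1}$ are unbounded and defined only on the relevant ranges.
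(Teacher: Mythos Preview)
Your proof is correct and follows essentially the same route as the paper: the Fillmore--Williams range identity for (1), the direct norm computation and the intertwining $D_YY^*=Y^*D_{Y^*}$ for (4), the algebraic identity $D_Y^2=(I-Y)+(I-Y^*)Y$ for (8), and symmetry $Y\leftrightarrow Y^*$ throughout. Two small deviations are worth noting: you derive (2) as a corollary of (3) via injectivity of $(I-Y^*)(I-Y)^{-1}$, whereas the paper proves (2) directly first; and for the converse in (5)/(6) you simply observe that $\ran D_{Y^*}\subseteq\ran(I-Y_R)^\half$ (from (1)) combined with (b) gives (a), which is in fact cleaner than the paper's argument, where the converse is obtained via the Douglas lemma and a factorization $I-Y=(I-Y_R)^\half X$.
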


\begin{proof}
(1) (see \cite[Proposition 3.1]{Arl_arxiv2024}). We use the following equality for bounded operators $F,G\in\bB(\cH)$ (see \cite[Theorem 2.2]{FW}):
\[
\ran F+\ran G=\ran \left(FF^*+GG^*\right)^\half.
\]
Hence
\[
\begin{array}{l}
\ran (I-Y^*)+\ran D_{Y}=\ran\left((I-Y^*)(I-Y)+D^2_{Y}\right)^\half\\
=\ran\left((I-Y^*)(I-Y)+I-Y^* Y\right)^\half=\ran\left(2I- (Y^*+ Y))\right)^\half.
\end{array}
\]
Similarly
\[
\ran (I-Y)+\ran D_{Y^*}=\ran\left(2I- (Y^*+ Y))\right)^\half.
\]

(2) (see \cite[Proposition 3.5]{ArlCAOT2023}). We give a proof for completeness. Suppose that $ D_{Y^*} h=(I-Y)g.$
Then due to the equality $Y^* D_{Y^*}=D_{Y} Y^*$
we get
\begin{multline*}
Y^*D_{Y^*} h= D_{Y}  Y^*h=Y^*(I-Y)g\\
= D^2_{Y} g-(I-Y^*)g\Longleftrightarrow   D_{Y}\left( D_{Y}g-Y^* h\right)=(I-Y^*)g.
\end{multline*}
It follows that
$$\ran  D_{Y^* }\cap\ran (I-Y)\ne\{0\}\Longrightarrow \ran D_{Y}\cap\ran (I-Y^*)\ne\{0\}.$$
Similarly
$$\ran D_{Y}\cap\ran (I-Y^*)\ne\{0\}\Longrightarrow \ran  D_{Y^*}\cap\ran (I-Y)\ne\{0\}.$$
(3) Suppose $h=(Y-I)z=D_{Y^*}x$. Then $(Y-I)^{-1}h=z$, $ (Y^*-I)(Y-I)^{-1}h=(Y^*-I)z$.
\[
\begin{array}{l}
(Y-I)z=D_{Y^*}x\Longrightarrow Y^*(Y-I)z=Y^*D_{Y^*}x=D_Y Y^*x\Longrightarrow (Y^*Y-Y^*)z=D_Y Y^*x\\
\Longrightarrow -D^2_Yz-(Y^*-I)z=D_Y Y^*x\Longrightarrow (Y^*-I)z=-D_Y(D_Yz+Y^*x)\\
\Longrightarrow (Y^*-I)(Y-I)^{-1}h\in\ran D_Y\cap\ran (Y^*-I).
\end{array}
\]
Thus, $(I-Y^*)(I-Y)^{-1}\left(\ran (I-Y)\cap\ran D_{Y^*}\right)\subseteq\left(\ran (I-Y^*)\cap\ran D_{Y}\right)$.
The converse inclusion follows from the replacing $Y\mapsto Y^*.$

(4) The equality $D_{Y^*}h=0$ is equivalent to the equality $||Y^*h||=||h||$ and in this case $-Y^*h+D_{Y}(I-Y)^{-1}D_{Y^*}h=-Y^*h$. Suppose $D_{Y^*}h\ne 0$ and $D_{Y^*}h\in \ran (I-Y)$. Using the equalities $D_Y Y^*=Y^*D_{Y^*}$ \cite{SF} and $||D_Yg||^2=||g||^2-||Yg||^2$, we get
\[\begin{array}{l}
\left\|-Y^*h+D_{Y}(I-Y)^{-1}D_{Y^*}h\right\|^2=||Y^*h||^2+\left\|D_{Y}(I-Y)^{-1}D_{Y^*}h\right\|^2\\[2mm]
-2\RE(Y^*h, D_{Y}(I-Y)^{-1}D_{Y^*}h)=||Y^*h||^2+\left\|(I-Y)^{-1}D_{Y^*}h\right\|^2\\[2mm]
-\left\|{Y}(I-Y)^{-1}D_{Y^*}h\right\|^2-2\RE(D_{Y^*}h, Y(I-Y)^{-1}D_{Y^*}h)\\[2mm]
=||h||^2-||D_{Y^*}h||^2 -\left\|{Y}(I-Y)^{-1}D_{Y^*}h\right\|^2-2\RE(D_{Y^*}h, Y(I-Y)^{-1}D_{Y^*}h)\\[2mm]
+\left\|(I-Y)^{-1}D_{Y^*}h\right\|^2
=||h||^2-||D_{Y^*}h+{Y}(I-Y)^{-1}D_{Y^*}h||^2+\left\|(I-Y)^{-1}D_{Y^*}h\right\|^2\\[2mm]
=||h||^2-||D_{Y^*}h+(Y-I)(I-Y)^{-1}D_{Y^*}h+(I-Y)^{-1}D_{Y^*}h||^2\\[2mm]
+\left\|(I-Y)^{-1}D_{Y^*}h\right\|^2
=||h||^2-\left\|(I-Y)^{-1}D_{Y^*}h\right\|^2+\left\|(I-Y)^{-1}D_{Y^*}h\right\|^2=||h||^2.
\end{array}
\]
Further
\[
\begin{array}{l}
D_{Y}\left(-Y^*h+D_{Y}(I-Y)^{-1}D_{Y^*}h\right)=-Y^*D_{Y^*}h+D^2_{Y}(I-Y)^{-1}D_{Y^*}h\\[2mm]
=\left(-Y^*+(I-Y^*Y)(I-Y)^{-1}\right)D_{Y^*} h=\left(-Y^*+(Y^*(I-Y)-Y^*+I)(I-Y)^{-1}\right)D_{Y^*} h\\[2mm]
=(I-Y^*)(I-Y)^{-1}D_{Y^*}h\in\ran (I-Y^*).
\end{array}
\]

(5), (6), (7). Since equalities in \eqref{mar15a} yield inclusions
$$\ran (I-Y),\ran (I-Y^*)\subseteq\ran (I-Y_R)^\half,$$
the inclusion $\ran D_{Y^*}\subseteq\ran (I-Y)$ (respectively, $\ran D_{Y}\subseteq\ran (I-Y^*)$) together with \eqref{mar15a} imply that $\ran (I-Y)=\ran (I-Y_R)^\half$ (respectively,  $\ran (I-Y^*)=\ran (I-Y_R)^\half$).

Conversely, assume that $\ran (I-Y)=\ran (I-Y_R)^\half$. Then the Douglas lemma \cite{Doug} yields that
$I-Y=(I-Y_R)^\half X$ with bounded operator $X$ and therefore
\[
(I-Y)(I-Y^*)=(I-Y_R)^\half XX^*(I-Y_R)^\half \Longleftrightarrow D^2_{Y^*}=(I-Y_R)^\half (2- XX^*)(I-Y_R)^\half.
\]
It follows that
$$\ran D_{Y^*}=(I-Y_R)^\half\ran(2-XX^*)^\half\subset\ran(I-Y_R)^\half=\ran (I-Y).$$
Similarly the inclusion $\ran (I-Y^*)=\ran (I-Y_R)^\half$ implies $\ran D_{Y}\subset \ran (I-Y_R)^\half.$

(8) If for each $f\in\sH$ one can find $h\in \sH$ such that $(I-Y)f=(I-Y^*)h$, then the equality
\[
I-Y+(I-Y^*)Y=D^2_{Y}
\]
implies
$
D^2_{Y}f=(I-Y^*)Yf+(I-Y^*)h\in\ran (I-Y^*).
$
Conversely, if  $D^2_{Y}f\in\ran (I-Y^*)$ for each $f\in\sH$,   then
\[
(I-Y)f=D^2_{Y}f-(I-Y^*)Y f\in\ran (I-Y^*) .
\]
Thus
\[
\ran D^2_{Y^*}\subseteq\ran (I-Y)\\
\Longleftrightarrow \ran (I-Y)=\ran (I-Y^*).
\]
\end{proof}
\begin{remark}\label{apr26aa}
Since $\ran F=\ran(FF^*)^\half$, the following are valid:
\[
\begin{array}{l}
(1)\; D_Y=0\Longleftrightarrow \; Y \;\mbox{is an isometry}\\[2mm]
\Longrightarrow\ran (I-Y^*)=\ran ((I-Y^*)(I-Y))^\half=\ran (I-Y_R)^\half, \\[2mm]
(2)\; D_{Y^*}=0 \Longleftrightarrow\; Y^* \;\mbox{is an isometry\;(i.e.,\; Y\; is a co-isometry)}\\
\Longrightarrow\ran (I-Y^*)=\ran(I-Y_R)^\half,\\[2mm]
(3)\; D_{Y}=0\;\mbox{and}\;D_{Y^*}=0\\[2mm]
\Longleftrightarrow \; Y \;\mbox{is a unitary}\Longrightarrow \ran(I-Y)=\ran (I-Y^*)=\ran(I-Y_R)^\half.
\end{array}
\]
\end{remark}
\begin{remark}\label{juni22a}
Due to  Theorem \ref{ivekmzy}, Proposition \ref{apr7a} and equalities in \eqref{30jul} and \eqref{mar16bb}
 all cases related to contractions in Theorem \ref{mar14a} can be realized by means of the Cayley transforms of the Shtraus extensions of symmetric operators in an infinite-dimensional separable complex Hilbert space.

 Besides it is shown in \cite[Corollary 6.2]{Arl_arxiv2024} that there exist contractions $Y$ such that
\[
\begin{array}{l}
\ker D_Y=\{0\},\;\; \ran (I-Y)\cap \ran D_{Y^*}=\{0\}(\Longleftrightarrow \ran (I-Y^*)\cap\ran D_Y=\{0\}),\\[2mm]
\ran (I-Y)\cap\ran (I-Y^*)=\{0\}.
\end{array}
\]
\end{remark}
\section{Compressions of selfadjoint extensions}\label{mar16}
\begin{proposition}
\label{bef15b}
Let $S$,
 be a non-densely defined regular symmetric operator $S$ with equal deficiency indices, let $U\in\bB(\sN_i,\sN_{-i})$ be an admissible unitary operator,
 and let $\wt S$ be the corresponding selfadjoint extensions of $S$, i.e.,
\begin{equation}\label{30bjul}
\begin{array}{l}
\dom \wt S=\dom S\dot+(I-U)\sN_i,\; \ker(U-V_i)\uphar\sL_i=\{0\},\\[2mm]
 \wt S(f_S+\f_i-U\f_i)=Sf_S+i\f_i+iU\f_i,\; f_S\in\dom S,\;\f_i\in\sN_i.
 \end{array}
\end{equation}
Then
\begin{enumerate}
\item the equalities
\begin{equation}\label{bef8}
\left\{\begin{array}{l}
\dom \wt S\cap \cH_0=\dom S\dot+(I-U)\ker(P_{\sL_{-i}}U- V_iP_{\sL_i})\\[2mm]
=\dom S\dot + (I-U^{-1})\ker (P_{\sL_{i}}U^{-1}-V_{-i}P_{\sL_{-i}})\\[2mm]
\qquad\qquad =\dom S\dot+ (I-P_{\sN'_{-i}}U)P_{\sN'_i}\ker(P_{\sL_{-i}}U- V_iP_{\sL_i})\\[2mm]
= \dom S\dot+ (I-P_{\sN'_{i}}U^{-1})P_{\sN'_{-i}}\ker(P_{\sL_{i}}U^{-1}-V_{-i} P_{\sL_{-i}})
\end{array}\right.
\end{equation}
 hold,
where $\sL_{\pm i}$ and $V_{\pm i}$ are defined in \eqref{gjlghjcn} and \eqref{forbis}, respectively;
\item the subspaces $\sN'_i\ominus P_{\sN'_i}\ker(P_{\sL_{-i}}U- V_iP_{\sL_i})$ and\\
$\sN'_{-i}\ominus P_{\sN'_{-i}}\ker(P_{\sL_{i}}U^{-1}-V_{-i} P_{\sL_{-i}})$
are the deficiency subspaces of $\wt S_0:=P_{\cH_0}\wt S\uphar{\dom \wt S\cap \cH_0}$;
\item the following are equivalent:
\begin{enumerate}
\def\labelenumi{\rm (\roman{enumi})}
\item the operator $\wt S_0$ is closed,
\item the linear manifold $P_{\sN'_i}\ker(P_{\sL_{-i}}U- V_iP_{\sL_i})$ is a subspace,
\item the linear manifold $P_{\sN'_{-i}}\ker(P_{\sL_{i}}U^{-1}-V_{-i}P_{\sL_{-i}} )$ is a subspace;
\end{enumerate}
\item the following are equivalent:
\begin{enumerate}
\def\labelenumi{\rm (\roman{enumi})}
\item $\dom\wt S\cap\cH_0=\dom S,$
\item $\ker(P_{\sL_{-i}}U- V_iP_{\sL_i})=\{0\},$
\item $P_{\sN'_i}\ker(P_{\sL_{-i}}U- V_iP_{\sL_i})=\{0\}$,
\item $\ker (P_{\sL_{i}}U^{-1}-V_{-i}P_{\sL_{-i}})=\{0\}$,
\item $P_{\sN'_{-i}}\ker(P_{\sL_{i}}U^{-1}-V_{-i}P_{\sL_{-i}})=\{0\};$
\end{enumerate}
in particular,
if one of the semi-deficiency subspaces of $S$ is trivial, then $\dom \wt S\cap \cH_0=\dom S$;
\item
in the case $n_+'=n_-'\ne 0$ the following are equivalent:
\begin{enumerate}
\def\labelenumi{\rm (\roman{enumi})}
\item operator $\wt S_0$ is selfadjoint in $\cH_0,$
\item $P_{\sN'_i}\ker(P_{\sL_{-i}}U- V_iP_{\sL_i})=\sN'_i$ and $P_{\sN'_{-i}}U\ker(P_{\sL_{-i}}U- V_iP_{\sL_i})=\sN'_{-i},$
\item $P_{\sN'_{-i}}\ker(P_{\sL_{i}}U^{-1}-V_{-i}P_{\sL_{-i}})=\sN'_{-i}$ and $P_{\sN'_{i}}U^{-1}P_{\sN'_{-i}}\ker(P_{\sL_{i}}U^{-1}-V_{-i}P_{\sL_{-i}})=\sN'_i$,
\item $P_{\sN'_i}\ker(P_{\sL_{-i}}U- V_iP_{\sL_i})=\sN'_i$ and $P_{\sN'_{-i}}\ker(P_{\sL_{i}}U^{-1}-V_{-i}P_{\sL_{-i}})=\sN'_{-i}$.
\end{enumerate}
\end{enumerate}
\end{proposition}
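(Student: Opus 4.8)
The plan is to compute the domain $\dom\wt S\cap\cH_0$ explicitly and then read off every remaining assertion from von Neumann's theory applied to the densely defined symmetric operator $S_0$ in $\cH_0$. For part (1): every $f\in\dom\wt S$ is $f=f_S+(I-U)\f_i$ with $f_S\in\dom S\subseteq\cH_0$ and $\f_i\in\sN_i$, so $f\in\cH_0$ iff $((I-U)\f_i,h)=0$ for all $h\in\sL$. I would expand this pairing as $(\f_i,P_{\sN_i}h)-(U\f_i,P_{\sN_{-i}}h)$, insert $P_{\sN_{-i}}h=V_iP_{\sN_i}h$ from \eqref{forbis}, and use that $\{P_{\sN_i}h:h\in\sL\}=\sL_i$; after applying the isometry $V_i$ the condition collapses to $\f_i\in\sK:=\ker(P_{\sL_{-i}}U-V_iP_{\sL_i})$, which is the first equality of \eqref{bef8}. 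The second equality is the mirror image: since $(I-U)\f_i=-(I-U^{-1})(U\f_i)$ one has $\dom\wt S=\dom S\dot+(I-U^{-1})\sN_{-i}$, and repeating the computation with $i\leftrightarrow-i$, $U\leftrightarrow U^{-1}$ gives $\sK':=\ker(P_{\sL_i}U^{-1}-V_{-i}P_{\sL_{-i}})$. For the two semi-deficiency forms I would reduce $(I-U)\f_i$ modulo $\dom S$: writing $\f_i=a+b$ with $a=P_{\sL_i}\f_i$, $b=P_{\sN'_i}\f_i$, the identity $(I-V_i)a\in\dom S$ (valid because $S$ is regular, so $\sL_i$ is a subspace) together with the kernel relation $P_{\sL_{-i}}U\f_i=V_ia$ yields $(I-U)\f_i\equiv P_{\sN'_i}\f_i-P_{\sN'_{-i}}U\f_i\pmod{\dom S}$. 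Using the block form of $U$ and the admissibility criterion ($\ker(I-Y)=\{0\}$ with $Y=U_{11}V_{-i}$, hence $U_{11}-V_i$ injective on $\sL_i$) to solve $a=-(U_{11}-V_i)^{-1}U_{12}b$, this reduction becomes $(I-U_0)P_{\sN'_i}\f_i$ and identifies $P_{\sN'_i}\sK$ with $\dom U_0$, giving the third and fourth equalities.

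\emph{Parts (2)--(3).} The compression $\wt S_0$ is symmetric, since $\IM(\wt S_0f,f)=\IM(\wt Sf,f)=0$ for $f\in\dom\wt S\cap\cH_0$. Part (1) shows $\dom\wt S_0=\dom S_0\dot+(I-U_0)\dom U_0$ with $\dom U_0=P_{\sN'_i}\sK$ and (from the mirror form) $\ran U_0=P_{\sN'_{-i}}\sK'$, which is exactly von Neumann's parametrization of a symmetric extension of $S_0$. Hence the deficiency subspaces of $\wt S_0$ are $\sN'_i\ominus\dom U_0$ and $\sN'_{-i}\ominus\ran U_0$; since orthogonal complements do not see closures, these are the asserted $\sN'_i\ominus P_{\sN'_i}\sK$ and $\sN'_{-i}\ominus P_{\sN'_{-i}}\sK'$. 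For (3), $I-U_0$ is bounded below on $\sN'_i$ (the angle between $\sN'_i$ and $\sN'_{-i}$ is positive because $\sN'_i\cap\sN'_{-i}=\{0\}$), so $\wt S_0$ is closed iff $\dom U_0=P_{\sN'_i}\sK$ is closed, and, by the mirror representation, iff $\ran U_0=P_{\sN'_{-i}}\sK'$ is closed; this is the specialization of Remark \ref{mar9bb}.

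\emph{Parts (4)--(5).} Because $I-U$ is injective on $\sN_i$ and $I-U_0$ on $\sN'_i$, the first equality gives (i)$\Leftrightarrow\sK=\{0\}$ (that is (ii)) and the third gives (i)$\Leftrightarrow P_{\sN'_i}\sK=\{0\}$ (that is (iii)); the mirror statements yield (iv) and (v), so all five are equivalent, and if a semi-deficiency subspace is trivial then (iii) or (v) holds automatically. For (5), with $n'_+=n'_-\neq0$, $\wt S_0$ is selfadjoint iff it is closed with both deficiency subspaces trivial, equivalently (by (2)--(3)) iff $P_{\sN'_i}\sK=\sN'_i$ and $P_{\sN'_{-i}}\sK'=\sN'_{-i}$, which is (iv). Condition (ii) is the same statement read as $\dom U_0=\sN'_i$, $\ran U_0=\sN'_{-i}$, using the bijection $\dom U_0\to\sK$, $b\mapsto a(b)+b$, which gives $\ran U_0=P_{\sN'_{-i}}U\sK$; condition (iii) is its $i\leftrightarrow-i$ mirror.

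The main obstacle is the modulo-$\dom S$ reduction in part (1): this is where regularity (closedness of $\sL_i$, hence $(I-V_i)a\in\dom S$), admissibility (invertibility of $U_{11}-V_i$ on its range), and the $2\times2$ block structure of $U$ must be combined to produce the compressed isometry $U_0$. Everything in parts (2)--(5) is then a formal consequence of von Neumann's extension theory for the densely defined operator $S_0$.
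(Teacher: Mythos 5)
Your argument is correct in substance and reaches every assertion of the proposition, but the route through part (1) is genuinely different from the paper's. The paper identifies $\ti\sN_i=\{\f_i\in\sN_i:(I-U)\f_i\in\cH_0\}$ with $\ker(P_{\sL_{-i}}U-V_iP_{\sL_i})$ in two separate steps: the forward direction by decomposing $(I-U)\f_i$ in $\dom S\dot+\sN_i'\dot+\sN_{-i}'$ and invoking the linear-dependence relation \eqref{gthtctx1}, and the converse by an explicit computation with \eqref{xthdjy}. You instead test $(I-U)\f_i\perp\sL$ directly, push the pairing through $P_{\sN_{-i}}h=V_iP_{\sN_i}h$, and get both inclusions at once from the single identity $P_{\sL_i}\f_i=V_{-i}P_{\sL_{-i}}U\f_i$; this is shorter and makes the role of \eqref{forbis} more transparent. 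Your reduction of $(I-U)\f_i$ modulo $\dom S$ to $(I-U_0)P_{\sN'_i}\f_i$ via the block form of $U$ and the inverse $(U_{11}-V_i)^{-1}$ is the content of the paper's Remark \ref{mar23a} combined with the abstract definition \eqref{bef8b} of $U_0$; the paper verifies directly that \eqref{bef8b} is a well-defined isometry, whereas you get well-definedness for free from solving $a=-(U_{11}-V_i)^{-1}U_{12}b$, at the price of having to note that admissibility of the unitary $U$ forces $\ker(U_{11}-V_i)=\{0\}$ (which does hold: $(U_{11}-V_i)f=0$ with $f\in\sL_i$ and $U$ isometric forces $U_{21}f=0$, hence $f\in\ker(U-V_i)\uphar\sL_i=\{0\}$). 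Parts (2), (4), (5) then follow exactly as in the paper, namely by reading off von Neumann's parametrization for the densely defined closed operator $S_0$.

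The one step you should repair is the justification of part (3). You claim $I-U_0$ is bounded below on $\dom U_0$ "because the angle between $\sN'_i$ and $\sN'_{-i}$ is positive, since $\sN'_i\cap\sN'_{-i}=\{0\}$". Trivial intersection of two infinite-dimensional subspaces does not imply a positive angle, and in general the deficiency subspaces of a densely defined closed symmetric operator do not have positive angle in the norm of $\cH_0$, so $I-U_0$ need not be bounded below there. The correct (and standard) argument runs in the graph norm of $S_0^*$ on $\dom S_0^*$ (the space $\cH^+_{S_0^*}$ as in \eqref{aug12a}): there $\dom S_0$, $\sN'_i$, $\sN'_{-i}$ are mutually orthogonal, and for $b\in\dom U_0$ one computes $\|(I-U_0)b\|^2+\|\wt S_0(I-U_0)b\|^2=\|(I-U_0)b\|^2+\|(I+U_0)b\|^2=4\|b\|^2$ using that $U_0$ is isometric; hence $(I-U_0)$ is bounded below from $\dom U_0$ into $\cH^+_{S_0^*}$, the sum $\dom S_0\dot+(I-U_0)\dom U_0$ is closed in the graph norm precisely when $\dom U_0$ is closed, and likewise for the mirror representation. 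With this substitution the proof is complete.
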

\begin{proof}
Note that if $\dim\sL< \infty$, then $S$ is regular and the semi-deficiency indices are equal.

Set
\begin{equation}
\label{ee3-34} \ti\sN_i:=\{\varphi_i\in\sN_i,
(U-I)\varphi_i\in\cH_0\},\;\ti\sN_{-i}:=\{\phi_{-i}\in\sN_{-i}:(U^{-1}-I)\phi_{-i}\in\cH_0\}.
\end{equation}
Clearly
\begin{equation}\label{mar5c}
\begin{array}{l}
\ti\sN_{-i}=U\ti\sN_{i},\\
\dom\wt S\cap\cH_0=\dom S\dot+(I-U)\ti\sN_i=\dom S \dot +(I-U^{-1})\ti\sN_{-i}.
\end{array}
\end{equation}
Suppose that $(I-U)\f_i\in\dom \wt S\cap \cH_0$, i.e, $\f_i\in\ti \sN_i.$
Then
\[
\begin{array}{l}
(I-U)\f_i\in\dom S^*\cap\cH_0=\dom S\dot +\sN'_i\dot +\sN'_{-i}\\
\Longleftrightarrow (I-U)\f_i=f_S+\f_i'+\f_{-i}',\; f_S\in\dom S,\f_i'\in\sN'_i,\; \f_{i}'\in\sN_{-i}'.
\end{array}
\]
From \eqref{gthtctx1} we get that
\[
\f_i'-\f_i=P_{\sN_i}h,\; \f_{-i}'+U\f_i=-P_{\sN_{-i}}h,\; h\in\sL.
\]
Hence
\[
P_{\sL_i}\f_i=-P_{\sN_{i}}h,\; P_{\sL_{-i}}U\f_i=-P_{\sN_{-i}}h=-V_iP_{\sN_i}h,\; h\in\sL.
\]
Thus $P_{\sL_{-i}}U\f_i=-V_iP_{\sN_{-i}}h=V_i P_{\sL_i}\f_i$, i.e., $\f_i\in \ker(P_{\sL_{-i}}U- V_iP_{\sL_i}).$

Conversely, if $\f_i\in \ker(P_{\sL_{-i}}U- V_{i}P_{\sL_i})$, then $P_{\sL_{-i}}U\f_i=V_iP_{\sL_i}\f_i$ and from \eqref{gjlghjcn}, \eqref{forbis} there is $h\in\sL$ such that $P_{\sL_i}\f_i=P_{\sN_{i}}h$ and
\[
\f_i=P_{\sN'_i}\f_i+P_{\sN_{i}}h, \; U\f_i= P_{\sN'_{-i}}U\f_i+P_{\sN_{-i}}h.
\]
Hence and from \eqref{xthdjy}
\[
\begin{array}{l}
\f_i-U\f_i=P_{\sN'_i}\f_i-P_{\sN'_{-i}}U\f_i+P_{\sN_{i}}h-P_{\sN_{-i}}h=P_{\sN'_i}\f_i-P_{\sN'_{-i}}U\f_i\\
+(I_\cH-P_{\sM_{i}})h-(I_\cH-P_{\sM_{-i}})h\\
=P_{\sN'_i}\f_i-P_{\sN'_{-i}}U\f_i -P_{\sM_{i}}h+(S+i I)(S-i I)^{-1}P_{\sM_i}h\\
=P_{\sN'_i}\f_i-P_{\sN'_{-i}}U\f_i+2i(S-i I)^{-1}P_{\sM_i}h\in \cH_0.
\end{array}
\]
So, it is proved that
\begin{equation}\label{mar5}
\ti\sN_i=\ker(P_{\sL_{-i}}U- V_iP_{\sL_i}).
\end{equation}
Arguing similarly we get (recall that $V_{-i}=V^{-1}_i=V^*_i$, see \eqref{forbis}) that
\begin{equation}\label{march17}
\ti\sN_{-i}=\ker (P_{\sL_{i}}U^{-1}-V_{-i}P_{\sL_{-i}}),
\end{equation}
\begin{multline*}
\dom\wt S\cap \cH_0=\dom S\dot + (I-U^{-1})\ker (P_{\sL_{i}}U^{-1}-V_{-i}P_{\sL_{-i}})\\
=\dom S\dot+ (I-P_{\sN'_{i}}U^{-1})P_{\sN'_{-i}}\ker(P_{\sL_{i}}U^{-1}-V_{-i}P_{\sL_{-i}}),
\end{multline*}
Thus, \eqref{bef8} is proved.
Observe that the mapping
\begin{equation}\label{bef8b}\left\{\begin{array}{l}
\dom U_0=P_{\sN'_i}\ker(P_{\sL_{-i}}U- V_iP_{\sL_i}),\\[2mm]
U_0:P_{\sN'_i}\f_i\mapsto P_{\sN'_{-i}}U\f_i,\; \f_i\in \ker(P_{\sL_{-i}}U- V_iP_{\sL_i})
\end{array}\right.
\end{equation}
is an isometry. Actually, if $P_{\sL_{-i}}U\f_i= V_iP_{\sL_i}\f_i$, $\f_i\in\sN_i$, then
\[
\begin{array}{l}
||P_{\sN'_{-i}}U\f_i||^2=||U\f_i||^2-||P_{\sL_{-i}}U\f_i||^2=||\f_i||^2-||V_iP_{\sL_i}\f_i||^2\\[2mm]
=||\f_i||^2-||P_{\sL_i}\f_i||^2=||P_{\sN'_i}\f_i||^2.
\end{array}
\]
Besides, it has been shown above that
\begin{equation}\label{bef8c}
\dom \wt S\cap \cH_0=\dom S\dot+ (I-U_0)\dom U_0.
\end{equation}
Therefore
$$
\cH_0\ominus(\wt S_0+iI)(\dom\wt S_0)=\sN'_i\ominus P_{\sN'_i}\ker(P_{\sL_{-i}}U- V_iP_{\sL_i}).
$$
It is evident that
\[
\f_i\in \ker(P_{\sL_{-i}}U- V_iP_{\sL_i})\Longleftrightarrow P_{\sL_{-i}}UP_{\sN_i}\f_i=(V_i- P_{\sL_{-i}}U) P_{\sL_{i}}\f_i.
\]
Therefore
\begin{equation}\label{bef19a}
\ker(P_{\sL_{-i}}U- V_iP_{\sL_i})=\left \{\f'_i+f_i:\f'_i\in\sN'_i,\; f_i\in\sL_i,\;P_{\sL_{-i}}U\f'_i=(V_i- P_{\sL_{-i}}U) f_i\right\}.
\end{equation}
It follows that along with \eqref{bef8b} the following is valid:
\[
\dom U_0 =\left\{\f'_i\in\sN'_i:P_{\sL_{-i}}U\f'_i\in (V_i- P_{\sL_{-i}}U)\sL_i\right\}
\]
and hence
\[
\dom U_0=\sN'_i\Longleftrightarrow P_{\sL_{-i}}U\sN'_i\subset (V_i- P_{\sL_{-i}}U)\sL_i.
\]

The inverse $U_0^{-1}$ can be described as follows
\begin{equation}\label{bef8bb}
\left\{\begin{array}{l}
\dom U_0^{-1}=P_{\sN'_{-i}}\ker(P_{\sL_{i}}U^{-1}- V_{-i}P_{\sL_{-i}}),\\
U_0^{-1}:P_{\sN'_{-i}}\f_{-i}\mapsto P_{\sN'_{i}}U^{-1}\f_{-i},\; \f_{-i}\in \ker(P_{\sL_{i}}U^{-1}- V_{-i}P_{\sL_{-i}}).
\end{array}\right.
\end{equation}
Consequently
\[
\cH_0\ominus(\wt S_0-iI)(\dom\wt S_0)=\sN'_{-i}\ominus P_{\sN'_{-i}}\ker(P_{\sL_{i}}U^{-1}-V_{-i} P_{\sL_{-i}}).
\]
Equivalences in (3), (4), (5) follow from \eqref{bef8}, \eqref{bef8b}, \eqref{bef8c}, \eqref{bef8bb}.

Suppose that $\sN'_i=\{0\}$. Then due to $\ker (U-V_i)=\{0\},$ we get from \eqref{bef8} that $\dom \wt S \cap \cH_0=\dom S.$
Similarly the equality $\sN'_{-i}=\{0\}$ implies $\dom \wt S \cap \cH_0=\dom S.$

\end{proof}

\begin{remark}\label{mar23a}
If $n_+=n_-$ and $n_\pm'\ne 0$, then a unitary operator $U\in\bB(\sN_i,\sN_{-i})$ can be written in the block-operator matrix form
\[
U=
\begin{bmatrix} U_{11} &U_{12} \cr
U_{21}& U_{22}\end{bmatrix}:\begin{array}{l}\sL_{i}\\ \oplus\\
\sN'_i\end{array}\to \begin{array}{l}\sL_{-i}\\ \oplus\\
\sN'_{-i}\end{array}\Longleftrightarrow U^{-1}=U^*=
\begin{bmatrix} U_{11}^* &U_{21}^* \cr
U_{12}^*& U_{22}^*\end{bmatrix}:\begin{array}{l}\sL_{-i}\\ \oplus\\
\sN'_{-i}\end{array}\to \begin{array}{l}\sL_{i}\\ \oplus\\
\sN'_{i}\end{array}.
\]
It follows from \eqref{bef8b} and \eqref{bef8bb} that the operator $U_0$ and $U^{-1}_0$ are the lower right corners of the Schur complements of the operator matrices $U-V_iP_{\sL_i}$ and $U^{*}-V_{-i}P_{\sL_{-i}}$, i.e.,
\[
\begin{array}{l}
U_0=U_{22}-U_{21}(U_{11}-V_i)^{-1}U_{12},\; \dom U_0=\left\{\f_i'\in\sN_i':U_{12}\f_{i}'\in\ran (U_{11}-V_i)\right\},\\[2mm]
U^{-1}_0=U_{22}^*-U_{12}^*(U_{11}^*-V_{-i})^{-1}U_{21}^*,\; \dom U^{-1}_0=\left\{\f_{-i}'\in\sN_{-i}':U_{21}^*\f_{-i}'\in\ran (U_{11}^*-V_{-i})\right\}.
\end{array}
\]

\end{remark}
\begin{corollary}\label{bef7}
Let $S$ be a non-densely defined closed symmetric operator with equal deficiency indices. Let $\wt S$ be a selfadjoint extension of $S$ and let $U\in\bB(\sN_i,\sN_{-i})$ be an admisssible unitary operator determining $\wt S$ by \eqref{feb7} ($\lambda=i$).
Assume  $\dim\sL<\infty$.
Then
the compression $\wt S_0=P_{\cH_0}\wt S\uphar(\dom \wt S\cap \cH_0)$ is selfadjoint in $\cH_0$.
\end{corollary}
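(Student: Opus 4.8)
The plan is to reduce the selfadjointness of $\wt S_0$ to the two range inclusions of Remark \ref{mar23a} and then to obtain them for free once $\dim\sL<\infty$. First I would record the easy structural consequences of finiteness: since $V_i$ maps $\sL_i$ isometrically onto $\sL_{-i}$, we have $\dim\sL_i=\dim\sL_{-i}=:m\le\dim\sL<\infty$, by Theorem \ref{ivekmzy} the operator $S$ is regular, and the semi-deficiency indices satisfy $n_+'=n_-'$. If $n_\pm'=0$ then $\sN_i'=\{0\}$ and Proposition \ref{bef15b}(4) gives $\dom\wt S\cap\cH_0=\dom S$, so $\wt S_0=S_0$ is selfadjoint; hence I may assume $n_+'=n_-'\ne0$ and apply Proposition \ref{bef15b}(5). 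By the equivalence (i)$\Leftrightarrow$(iv) there (equivalently, by part (2), which identifies the deficiency subspaces of $\wt S_0$ with $\sN_i'\ominus\dom U_0$ and $\sN_{-i}'\ominus\dom U_0^{-1}$), it suffices to prove $\dom U_0=\sN_i'$ and $\dom U_0^{-1}=\sN_{-i}'$. By the Schur-complement description in Remark \ref{mar23a} this is exactly $\ran U_{12}\subseteq\ran(U_{11}-V_i)$ and $\ran U_{21}^*\subseteq\ran(U_{11}^*-V_{-i})$.

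The key step, and the only real obstacle, is to show that the corner operator $U_{11}-V_i:\sL_i\to\sL_{-i}$ is a bijection; then both inclusions hold trivially because their right-hand sides become all of $\sL_{-i}$, resp.\ $\sL_i$. The admissibility of $U$ yields only the \emph{a priori} weaker statement that $\ker(U_{11}-V_i)\cap\ker U_{21}=\{0\}$, since $(U-V_i)\f_i=0$ for $\f_i\in\sL_i$ means simultaneously $(U_{11}-V_i)\f_i=0$ and $U_{21}\f_i=0$. The point is that for a \emph{unitary} $U$ this intersection is no smaller than $\ker(U_{11}-V_i)$ itself: if $(U_{11}-V_i)\f_i=0$ then $\|U_{11}\f_i\|=\|V_i\f_i\|=\|\f_i\|$, while $\|\f_i\|^2=\|U\f_i\|^2=\|U_{11}\f_i\|^2+\|U_{21}\f_i\|^2$ forces $U_{21}\f_i=0$. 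Hence $\ker(U_{11}-V_i)\subseteq\ker U_{21}$, and combined with admissibility this gives $\ker(U_{11}-V_i)=\{0\}$.

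Finally I would invoke finite-dimensionality: an injective linear map $U_{11}-V_i$ between the spaces $\sL_i$ and $\sL_{-i}$ of equal finite dimension $m$ is automatically surjective, so $\ran(U_{11}-V_i)=\sL_{-i}$; passing to adjoints, $(U_{11}-V_i)^*=U_{11}^*-V_{-i}$ is likewise bijective and $\ran(U_{11}^*-V_{-i})=\sL_i$. The two inclusions of Remark \ref{mar23a} therefore hold, giving $\dom U_0=\sN_i'$ and $\dom U_0^{-1}=\sN_{-i}'$, whence the deficiency subspaces of $\wt S_0$ vanish and $\wt S_0$ is selfadjoint in $\cH_0$. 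As a consistency check, this reproves Stenger's theorem in the present situation, since a selfadjoint $\wt S$ is densely defined in $\cH$ and $\codim\cH_0=\dim\sL<\infty$.
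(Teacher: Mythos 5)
Your proof is correct and follows essentially the same route as the paper: both reduce the claim to the bijectivity of the corner $(P_{\sL_{-i}}U-V_i)\uphar\sL_i=U_{11}-V_i$, obtained from injectivity together with $\dim\sL_i=\dim\sL_{-i}=\dim\sL<\infty$, and then conclude $\dom U_0=\sN_i'$ and $\dom U_0^{-1}=\sN_{-i}'$, hence selfadjointness of $\wt S_0$ via Proposition \ref{bef15b}(5). Your unitarity argument showing $\ker(U_{11}-V_i)\subseteq\ker U_{21}$, so that admissibility forces $\ker(U_{11}-V_i)=\{0\}$, usefully makes explicit a step the paper's proof only asserts.
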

\begin{proof}
Because $\sL$ is finite-dimensional, by Theorem \ref{ivekmzy}
 the operator $S_0$ is closed.
 Since $V_if\ne Uf$ for all $f\in\sL_i\setminus\{0\}$, the operators $V_i$ and $U$ are isometries and due to \eqref{KRASn}
$$\dim\sL_i=\dim P_{\sN_i}\sL=\dim \sL<\infty,$$
we get that the operator $(V_i- P_{\sL_{-i}}U)\uphar\sL_i$ has bounded inverse defined on $\sL_{-i}$ and
\[
P_{\sL_{i}}\f_i=(V_i- P_{\sL_{-i}}U)^{-1}P_{\sL_{-i}}UP_{\sN_i}\f_i.
\]
Hence
\[
\begin{array}{l}
\ker(P_{\sL_{-i}}U- V_iP_{\sL_i})=\left\{\f'_i + (V_i- P_{\sL_{-i}}U)^{-1}P_{\sL_{-i}}U\f'_i:\;\f'_i\in \sN'_i\right\}\\
\qquad\qquad=\left(I+(V_i- P_{\sL_{-i}}U)^{-1}P_{\sL_{-i}}U\right)\sN'_i.
\end{array}
\]
It follows that the domain of the operator $U_0$ defined in \eqref{bef8b} coincides with $\sN'_i$.

Hence
\[
P_{\sN'_{-i}}\ker(V_{-i} P_{\sL_{-i}}-P_{\sL_{i}}U^{-1})=\sN'_{-i}
\]
and
\begin{equation}\label{bef9a}
\dom \wt S\cap \cH_0=\dom S\dot+ (I-P_{\sN'_{i}}U^{-1})P_{\sN'_{-i}}\ker(V_{-i} P_{\sL_{-i}}-P_{\sL_{i}}U^{-1}).
\end{equation}
 The equalities \eqref{bef8c} and \eqref{bef9a} show that the operator $U_0$ defined in \eqref{bef8b} unitarily maps $\sN'_i$ onto $\sN'_{-i}$, its inverse $U^{-1}_0$ is the operator defined in \eqref{bef8bb}.
Hence, by Proposition \ref{bef15b} (5), the compression $\wt S_0=
P_{\cH_0}\wt S\uphar(\dom \wt S\cap H_0)$ is a selfadjoint extension of the operator $S_0$ in $\cH_0$.

\end{proof}
\begin{corollary}(Stenger's lemma \cite[Lemma 1]{Stenger}).
Let $\wt S$ be a selfadjoint operator in $\cH$ and let $\cH_0$ be a finite-co-dimensional subspace of $\cH$. Then the operator $P_{\cH_0}\wt S\uphar(\dom \wt S\cap \cH_0)$ is selfadjoint in $\cH_0$.
\end{corollary}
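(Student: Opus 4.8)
The plan is to realize the given selfadjoint $\wt S$ as a selfadjoint extension of a suitable non-densely defined closed symmetric operator and then simply invoke Corollary~\ref{bef7}, which already contains all the analytic content. If $\codim \cH_0 = 0$, then $\cH_0 = \cH$, $P_{\cH_0} = I$, and the compression coincides with $\wt S$ itself, so there is nothing to prove; hence I may assume $\codim \cH_0 \ge 1$.

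First I would set $\sL := \cH \ominus \cH_0$, so that $\dim \sL = \codim \cH_0 < \infty$, and define
$$
S := \wt S \uphar \dom S, \qquad \dom S := \dom \wt S \cap \cH_0 .
$$
I claim $S$ satisfies all hypotheses of Corollary~\ref{bef7}. It is symmetric, being a restriction of the selfadjoint (hence symmetric) operator $\wt S$. It is closed: if $f_n \in \dom S$ with $f_n \to f$ and $Sf_n = \wt S f_n \to g$, then closedness of $\wt S$ gives $f \in \dom \wt S$ and $\wt S f = g$, while $f \in \cH_0$ because $\cH_0$ is closed, so $f \in \dom S$ and $Sf = g$. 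Finally $S$ is non-densely defined with $\cdom S = \cH_0$: the density of $\dom \wt S \cap \cH_0$ in $\cH_0$ follows from $\dom \wt S$ being dense in $\cH$ together with $\codim \cH_0 < \infty$ by \cite[Lemma 2.1]{GoKr}, while $\cH_0 \ne \cH$ by the assumption $\codim \cH_0 \ge 1$.

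Next I would check the two remaining structural hypotheses. Since $\wt S \supseteq S$ is a selfadjoint extension of the closed symmetric operator $S$, the non-densely defined analogue of von Neumann's existence theorem (see \cite[Lemma 3]{Naimark3}, \cite[Theorem 5]{krasno}) forces the deficiency indices of $S$ to be equal. Regularity of $S$ is immediate from $\dim \sL < \infty$: by Theorem~\ref{ivekmzy} the compression $S_0 = P_{\cH_0} S$ is then closed. Thus $S$ is a non-densely defined closed regular symmetric operator with equal deficiency indices and $\dim \sL < \infty$, and $\wt S$, being one of its selfadjoint extensions, is determined by \eqref{feb7} for some admissible unitary $U \in \bB(\sN_i, \sN_{-i})$.

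Finally, since $\dom S = \dom \wt S \cap \cH_0$, the operator in the statement is precisely the compression $\wt S_0 = P_{\cH_0}\wt S \uphar (\dom \wt S \cap \cH_0)$ associated with $\wt S$ and $S$, so Corollary~\ref{bef7} yields directly that $\wt S_0$ is selfadjoint in $\cH_0$, which is the assertion. The essential step is purely the reduction: once $\wt S$ is exhibited as a selfadjoint extension of a symmetric operator $S$ with $\cdom S = \cH_0$ and $\dim \sL < \infty$, the work is already done in Corollary~\ref{bef7}. The only mild subtlety is ensuring that $\dom \wt S \cap \cH_0$ is dense in $\cH_0$, and this is exactly where the finite codimension enters, through \cite[Lemma 2.1]{GoKr}.
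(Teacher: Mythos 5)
Your proposal is correct and follows exactly the paper's own route: restrict $\wt S$ to $\dom\wt S\cap\cH_0$ to obtain a non-densely defined closed symmetric operator $S$ with $\cdom S=\cH_0$ and $\dim\sL<\infty$, then invoke Corollary \ref{bef7}. You merely spell out the verifications (density via \cite[Lemma 2.1]{GoKr}, closedness, regularity via Theorem \ref{ivekmzy}, equality of deficiency indices) that the paper leaves implicit, plus the trivial case $\cH_0=\cH$.
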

\begin{proof}
Define the linear operator $S$ in $\cH$:
\[
S:=\wt S\uphar(\dom \wt S\cap\cH_0). 
\]
Then $\dom S$ is dense in $\cH_0$ $S$ is a closed regular  symmetric operator in $\cH$ and $\wt S$ is a selfadjoint extension of $S$.
By Theorem \ref{bef7} the operator $\wt S_0=P_{\cH_0}\wt S\uphar(\dom \wt S\cap \cH_0)$ is selfadjoint in $\cH_0$.
\end{proof}
\begin{theorem}\label{t3-15} cf. \cite[Theorem 2.5.7]{ArlBelTsek2011}
Let $\wt S$ be a regular self-adjoint
 extension of a non-densely defined regular symmetric operator $S$. Then the operator $P_{\cH_0}\wt S\uphar{(\dom \wt S\cap\cH_0)}$ is a self-adjoint extension of the operator $S_0$ in
$\cH_0$ and, therefore, the semi-deficiency indices of $S$ are equal.
 \end{theorem}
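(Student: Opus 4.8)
The plan is to reduce the selfadjointness of $\wt S_0$ to the unitarity of the isometry $U_0$ produced in Proposition~\ref{bef15b}, and then to read off that unitarity from the regularity of $\wt S$ through the contraction $Y:=P_{\sL_{-i}}UV_{-i}\uphar\sL_{-i}$. First I would fix the admissible unitary $U\in\bB(\sN_i,\sN_{-i})$ determining $\wt S$ by \eqref{feb7} with $\lambda=i$, written in the block form of Remark~\ref{mar23a}. Since $S$ is regular, $S_0$ is densely defined and closed in $\cH_0$ (Theorem~\ref{ivekmzy}), with deficiency subspaces $\sN'_{\pm i}$, and Proposition~\ref{bef15b} gives $\dom\wt S_0=\dom S_0\dot+(I-U_0)\dom U_0$ and describes the deficiency subspaces of $\wt S_0$ as $\sN'_i\ominus\dom U_0$ and $\sN'_{-i}\ominus\ran U_0$. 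Hence $\wt S_0$ is a selfadjoint extension of $S_0$ iff $\dom U_0=\sN'_i$ and $\ran U_0=\sN'_{-i}$, i.e. (Remark~\ref{mar23a}) iff $\ran U_{12}\subseteq\ran(U_{11}-V_i)$ and $\ran U_{21}^*\subseteq\ran(U_{11}^*-V_{-i})$; once these hold $U_0$ maps $\sN'_i$ isometrically onto $\sN'_{-i}$, so automatically $n'_+=\dim\sN'_i=\dim\sN'_{-i}=n'_-$. The whole theorem thus reduces to these two range inclusions.

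Next I would recast the inclusions in terms of $Y$. As $S$ is regular, $V_i:\sL_i\to\sL_{-i}$ is unitary with $V_{-i}=V_i^{-1}$, and $(I-Y)V_i=-(U_{11}-V_i)$, $V_{-i}(I-Y^*)=-(U_{11}^*-V_{-i})$, so $\ran(I-Y)=\ran(U_{11}-V_i)$ and $\ran(U_{11}^*-V_{-i})=V_{-i}\,\ran(I-Y^*)$. The unitarity of $U$ yields $U_{12}U_{12}^*=D_{U_{11}^*}^2=D_{Y^*}^2$ and $U_{21}^*U_{21}=D_{U_{11}}^2$, whence by $\ran F=\ran(FF^*)^\half$ one gets $\ran U_{12}=\ran D_{Y^*}$ and $\ran U_{21}^*=\ran D_{U_{11}}=V_{-i}\,\ran D_Y$. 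Consequently the two inclusions are exactly $\ran D_{Y^*}\subseteq\ran(I-Y)$ and $\ran D_Y\subseteq\ran(I-Y^*)$, which by Theorem~\ref{mar14a}(5),(6),(7) hold precisely when $\ran(I-Y)=\ran(I-Y^*)=\ran(I-Y_R)^\half$, where $Y_R:=\half(Y+Y^*)$. I also record that admissibility of the unitary $U$ forces $\ker(I-Y)=\{0\}$, and since for a contraction the equality $(Yx,x)=\|x\|^2$ forces $Yx=x$, also $\ker(I-Y_R)=\{0\}$.

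The decisive step is to exploit regularity of $\wt S$. Since $\wt S=\wt S^*\supseteq S$ and $\dC_-\subseteq\rho(\wt S)$, Remark~\ref{mar9bb} applies with $M=U$, $\lambda=i$, so $\wt S$ is regular iff $\ran W$ is a subspace, where $W:=(U-V_iP_{\sL_i})\uphar\sL_i$. A direct computation using $U_{21}^*U_{21}=I-U_{11}^*U_{11}$ gives $W^*W=2I-U_{11}^*V_i-V_i^*U_{11}$, and conjugation by the unitary $V_i$ turns this into $2(I-Y_R)$; hence $\ran W$ is closed iff $\ran(I-Y_R)$ is closed. For the nonnegative $I-Y_R$, closed range together with $\ker(I-Y_R)=\{0\}$ forces $I-Y_R\ge cI$ for some $c>0$. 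The estimate $\|(I-Y)x\|\,\|x\|\ge\RE\,((I-Y)x,x)=((I-Y_R)x,x)\ge c\|x\|^2$, and its analogue for $I-Y^*$, then show that $I-Y$ and $I-Y^*$ are bounded below, hence boundedly invertible, so $\ran(I-Y)=\ran(I-Y^*)=\sL_{-i}=\ran(I-Y_R)^\half$. By the previous paragraph this gives $\dom U_0=\sN'_i$ and $\ran U_0=\sN'_{-i}$, so $\wt S_0$ is selfadjoint and the semi-deficiency indices coincide.

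The step I expect to be the main obstacle is this last extraction: passing from the qualitative regularity condition of Remark~\ref{mar9bb} to the invertibility of $I-Y$. Its substance is the identity $W^*W\cong 2(I-Y_R)$ together with the role of admissibility: it is $\ker(I-Y)=\{0\}$ that upgrades mere closedness of $\ran(I-Y_R)$ to a strict positive lower bound, and hence to surjectivity of $I-Y$ and $I-Y^*$. Without admissibility, closed range of $I-Y_R$ alone would not force $\ran(I-Y)=\sL_{-i}$, which is exactly the phenomenon separating general regular dissipative compressions from selfadjoint ones.
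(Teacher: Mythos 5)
Your argument is correct, but it follows a genuinely different route from the paper's own proof of Theorem \ref{t3-15}. The paper argues directly: it introduces the subspace $\sN_i^0:=(\wt S-iI)^{-1}\sL$, uses regularity of $\wt S$ (closedness of $\dom\wt S$ in $\cH^+_{S^*}$) to show $\sN_i^0$ is closed, establishes the orthogonal decomposition $\sN_i=\sN_i^0\oplus\ti\sN_i$, and then shows that any $\psi\in\sN_i'$ orthogonal to $P_{\cH_0}(\wt S+iI)(\dom\wt S\cap\cH_0)$ lies in $\sN_i'\cap\sN_i^0\subset\dom\wt S$ and satisfies $P_{\cH_0}\wt S\psi=i\psi$, forcing $\psi=0$ by symmetry; essential selfadjointness plus closedness (again from regularity) then yields selfadjointness. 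You instead route everything through the block parametrization and the contraction $Y=U_{11}V_{-i}$: the identities $\ran U_{12}=\ran D_{Y^*}$, $\ran U_{21}^*=V_{-i}\ran D_Y$, $\ran(U_{11}-V_i)=\ran(I-Y)$ are all correct, Theorem \ref{mar14a}(7) converts the two range inclusions into $\ran(I-Y)=\ran(I-Y^*)=\ran(I-Y_R)^{1/2}$, and the computation $W^*W=2V_{-i}(I-Y_R)V_i$ combined with admissibility ($\ker(I-Y)=\{0\}$, hence $\ker(I-Y_R)=\{0\}$) correctly upgrades closedness of $\ran W$ to bounded invertibility of $I-Y$ and $I-Y^*$. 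This is in substance the argument the paper gives later for Theorem \ref{bef25a}(IV) (where the equivalence of regularity of $\wt S$ with $\ran(I-Y)=\sL_{-i}$ is proved by exactly your estimate $\|(I-Y)h\|\,\|h\|\ge((I-Y_R)h,h)\ge c\|h\|^2$), so your proof effectively anticipates the Section 6 machinery; what it buys is a quantitative statement ($I-Y$ boundedly invertible) and a uniform treatment of the dissipative case, while the paper's proof of Theorem \ref{t3-15} is more geometric, avoids the $2\times 2$ parametrization entirely, and works directly with deficiency subspaces. One small point to make explicit: the direction of Proposition \ref{bef15b}(5) you actually use (from $\dom U_0=\sN_i'$, $\ran U_0=\sN_{-i}'$ to selfadjointness of $\wt S_0$) does not presuppose $n_+'=n_-'$ but rather delivers it, so there is no circularity; and the degenerate case $n_+'n_-'=0$ is absorbed since your surjectivity of $I-Y$ and $I-Y^*$ forces both semi-deficiency indices to vanish simultaneously.
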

\begin{proof}
Let $U$ be  an admissible unitary operator $U\in\bB(\sN_i,\sN_{-i}) $ that
determines $\wt S$ by von Neumann' formulas \eqref{30bjul}  and let $\ti\sN_i$ be defined in \eqref{ee3-34}.
Let us define the following linear manifold
\[
\sN^0_i:=(\wt S-iI)^{-1}\sL.
\]
Then $\sN^0_i\subset\sN_i$. Indeed, if  $f\in\sL$ and $g\in\sM_{-i}=
(S+iI)\dom S$, then
$$
(g,(\wt S-iI)^{-1}f)=((\wt S+iI)^{-1}g,f)=((S+i I)^{-1}g,f)=0,
$$
Since $\wt S$ is a regular self-adjoint extension of $S$, the linear manifold $\dom\wt S$ is a subspace in $\cH^+_{S^*}$ and the operator
 and therefore $(\wt S-iI)^{-1}$ has the estimate
\[
c_1||f||\le||(\wt S-i I)^{-1}f||_{\cH^+_{S^*}}\le c_2||f||\quad\mbox{for all}\quad
f\in\cH,
\]
with $c_1>0$. Hence $\sN^0_i$ is a subspace in $\cH^+_{S^*}$ (and
simultaneously in $\cH).$

Let $\ti\sN_i$ be defined in \eqref{ee3-34}. Let us show that
\begin{equation}
\label{ee3-35} \sN_i^0\oplus\ti\sN_i=\sN_i,
\end{equation}
where the sum is orthogonal in $\cH.$ We have from \eqref{30bjul} 
$$
(\wt S+iI)^{-1}\varphi=-\frac{1}{2i}(U-I)\varphi,\; \varphi\in\sN_i.
$$
This yields the equivalences
\[
(U-I)\varphi\in\cH_0\iff(\wt S+i I)^{-1}\varphi\perp\sL\iff
\varphi\perp\sN^0_i,
\]
and the decomposition \eqref{ee3-35} holds true.

Suppose the vector $\psi\in\cH_0$ is orthogonal in $\cH_0$ to
$P_{\cH_0}(\wt S+iI)(\dom\wt S\cap\cH_0)$. Then $\psi\in\sN_i\cap\cH_0=\sN_{i}'$. Because
\[
(\wt S+iI)(\dom \wt S\cap \cH_0)\supset(\wt S+i I)(I-U)\ti\sN_i=\ti\sN_i
\]
and $\psi\in\cH_0$, we get from \eqref{ee3-35} that $\psi\in \sN_i^0.$
Thus $\psi\in\sN'_i\cap\sN^0_i.$
Since
$\sN^0_i\subset\dom\wt S$, we get that $\psi\in\dom\wt S$. Hence (see
\eqref{bef15a}) $P_{\cH_0}\wt S\psi=S^*\psi=i\psi$. Because
$P_{\cH_0}\wt S\uphar(\dom\wt S)\cap\cH_0$ is symmetric, we conclude that $\psi=0$. This
means $(P_{\cH_0}\wt S+iI)(\dom\wt S\cap\cH_0)$ is dense in $\cH_0$. Similarly it
can be proved that $(P_{\cH_0}\wt S-iI)(\dom \wt S\cap\cH_0)$ is dense in $\cH_0$.
Thus, the operator $P_{\cH_0}\wt S\uphar(\dom\wt S\cap\cH_0)$ is essentially
self-adjoint in $\cH_0$. On the other hand, since $\wt S$ is a regular
extension, the linear manifold
$\dom\wt S\cap\cH_0$ is a subspace in $\cH^+_{S^*}$ (see \eqref{aug12a}) and hence $P_{\cH_0}\wt S\uphar(\dom S\cap\cH_0)$ is
self-adjoint in $\cH_0$.
\end{proof}
So, if $\wt S$ is a regular selfadjoint extension of a regular non-densely defined symmetric operator $S$ and a unitary $U\in\bB(\sN_i,\sN_{-i})$ determines $\wt S$, then Theorem \ref{t3-15} and \eqref{ee3-34}, \eqref{bef8b} and \eqref{bef19a} show that
\[
\begin{array}{l}
\wt\sN_{i}=\{\f_i\in\sN_i:(U-I)\f_i\in\cH_0\}=\ker(P_{\sL_{-i}}U-V_iP_{\sL_i})=\left(I+(V_i-P_{\sL_{-i}}U)^{-1}P_{\sL_{-i}}U\right)\sN_i',\\[2mm]
P_{\sN'_{i}}\ker(P_{\sL_{-i}}U-V_iP_{\sL_i})=\sN'_{i},\;   P_{\sN'_{-i}}U\ker(P_{\sL_{-i}}U-V_iP_{\sL_i})=\sN'_{-i}.
\end{array}
\]

\section{Parametrization of contractive and unitary $2\times 2$ block operator matrices and compressions of selfadjoint extensions}
\label{apr11b}
\begin{lemma}\label{L1} cf. \cite[Lemma 3.1]{AHS_CAOT}
Let $\sH$, $\sK$, $\sM$, and $\sN$ be Hilbert spaces, and let
$F$ be a linear operator, $ \dom F\subset \sH$, $\ran F\subset\sK$. 
Let the operators
$M\in\bB(\sM,\sH)$ and $K\in\bB(\sK,\sN)$ be contractions,
and let the operator $X\in\bB(\sD_{M},\sD_{K^*})$ be a contraction.

Suppose $M\f\in\dom F$ and $||FM\f||\le ||M\f||.$ Then for
\begin{equation}\label{gee}
W\f:=KFM\f+D_{K^*}XD_{M}\f
\end{equation}
the equality
\begin{equation}
\label{CONTR}
\|\f\|^2- \|W\f\|^2 =||M\f||^2-||{F}M\f||^2+\|D_{X}D_{M}\f\|^2
+ \left\|\left(D_{K}FM-K^*XD_{M} \right)\f\right\|^2
\end{equation}
 holds.
Moreover, if $||FM\f||=||M\f||$ and if $K$, $M^*$, and $X$ are isometries, then
\[
||W\f||=||\f||.
\]
\end{lemma}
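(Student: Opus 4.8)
The plan is to establish \eqref{CONTR} as a purely algebraic identity, obtained by expanding $\|W\f\|^2$ from \eqref{gee} and repeatedly invoking two elementary facts valid for any contraction $Z$: the norm identity $\|D_Z g\|^2=\|g\|^2-\|Zg\|^2$ (just $D_Z^2=I-Z^*Z$), and the Sz.-Nagy--Foias intertwining relation $Z^*D_{Z^*}=D_Z Z^*$, which follows from $Z(I-Z^*Z)=(I-ZZ^*)Z$ and the functional calculus applied to the square root. No spectral theory beyond this is needed.

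First I would write $\|W\f\|^2=\|KFM\f\|^2+2\RE(KFM\f,D_{K^*}XD_M\f)+\|D_{K^*}XD_M\f\|^2$, which makes sense since $FM\f\in\ran F\subseteq\sK$ lies in the domain of $K$. The first summand is $\|FM\f\|^2-\|D_KFM\f\|^2$ and the last is $\|XD_M\f\|^2-\|K^*XD_M\f\|^2$, using $D_{K^*}^2=I-KK^*$. The crux is the cross term: rewriting $(KFM\f,D_{K^*}XD_M\f)=(FM\f,K^*D_{K^*}XD_M\f)$ and applying $K^*D_{K^*}=D_KK^*$ turns it into $(D_KFM\f,K^*XD_M\f)$. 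Collecting the three pieces and completing the square then gives
\[
\|W\f\|^2=\|FM\f\|^2+\|XD_M\f\|^2-\|(D_KFM-K^*XD_M)\f\|^2 .
\]

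It remains to convert $\|\f\|^2-\|W\f\|^2$ into the stated form. Because $D_M\f\in\sD_M=\cran D_M$ lies in the domain of the contraction $X$, I substitute $\|XD_M\f\|^2=\|D_M\f\|^2-\|D_XD_M\f\|^2$, and then use $\|\f\|^2-\|D_M\f\|^2=\|M\f\|^2$ from $D_M^2=I-M^*M$. Combining these yields exactly \eqref{CONTR}. I would note that the hypothesis $\|FM\f\|\le\|M\f\|$ plays no role in the identity itself; it only ensures that each of the four terms on the right-hand side is nonnegative, whence $\|W\f\|\le\|\f\|$.

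For the final assertion, assume $\|FM\f\|=\|M\f\|$ and that $K$ and $X$ are isometries. Then the first term of \eqref{CONTR} vanishes, and $X^*X=I$ on $\sD_M$ gives $D_XD_M\f=0$, killing the second. For the third, $K^*K=I$ forces $D_K=0$, while $D_{K^*}^2=I-KK^*$ is the orthogonal projection onto $\ker K^*$, so $\sD_{K^*}=\cran D_{K^*}=\ker K^*$; since $XD_M\f\in\sD_{K^*}$ this gives $K^*XD_M\f=0$, and the third term vanishes as well, yielding $\|W\f\|=\|\f\|$ (here only the isometry of $K$ and $X$ is actually used). The one genuinely non-routine point is the cross-term manipulation via $K^*D_{K^*}=D_KK^*$, together with the identification $\sD_{K^*}=\ker K^*$ in the isometric case; the rest is bookkeeping.
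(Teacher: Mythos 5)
Your proof is correct and follows essentially the same route as the paper's: the same cross-term manipulation via $K^*D_{K^*}=D_KK^*$, the same defect-operator norm identities, and the same completion of the square, merely organized as an expansion of $\|W\f\|^2$ rather than of $\|\f\|^2-\|W\f\|^2$. Your side observations (that $\|FM\f\|\le\|M\f\|$ is irrelevant to the identity itself, and that only the isometry of $K$ and $X$ is needed for the final norm equality) are also accurate.
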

\begin{proof}
From \eqref{gee} one gets
\begin{equation}\label{un}
\begin{array}{l}
\|\f\|^2-\|W\f\|^2=\|\f\|^2-\left\|\left(KFM+
D_{K^*}XD_{M}\right)\f\right\|^2  \\
 =\|\f\|^2-\|KFM \f\|^2-\|D_{K^*}XD_{M}\f\|^2
- 2\RE\left(KFM\f, D_{K^*}XD_{M}\f\right).
\end{array}
\end{equation}
The equality $K^*D_{K^*}=D_KK^*$ gives
\[
 \left(KFM\f, D_{K^*}XD_{M}\f\right)=\left(D_{K}FM\f,
 K^*XD_{M}\f\right).
\]
The definition of $D_{K^*}$ and $D_K$ shows that
\[
-\|KFM \f\|^2=\|D_{K}FM \f\|^2-\|FM\f\|^2,\;-\|D_{K^*}XD_{M}f\|^2=-\|XD_{M}f\|^2  +\|{K^*}XD_{M}\f\|^2
\]
Now the righthand side of \eqref{un} becomes
\[
\begin{split}
 &\|\f\|^2-\|FM\f\|^2-\|XD_{M}\f\|^2 \\
 & \hspace{1.5cm}+\|D_{K}FM \f\|^2  +\|{K^*}XD_{M}\f\|^2
-2\RE\left(D_{K}FM\f, K^*XD_{M}\f\right)\\
&=\|\f\|^2
 -\|FM\f\|^2  -\|XD_M \f\|^2
 +\left\|\left(D_{K}FM-K^*XD_{M} \right)\f\right\|^2.
\end{split}
\]
Finally, observe that
\[
\|D_FM\f\|^2=\|M\f\|^2-\|FM\f\|^2,
\quad \|D_XD_M\f\|^2=\|\f\|^2-\|M\f\|^2-\|XD_M\f\|^2.
\]
Hence the proof of \eqref{CONTR} is complete.

If $||FM\f||=||M\f||$ and if $K$, $M^*$, and $X$ are isometric operators, then, using that $D_K=0$, $D_X=0$, $\sD_{K^*}=\ker K^*$ and $\ran X\subseteq\sD_{K^*}$
we get that $||W\f||=||\f||.$

\end{proof}

We will use further the following parametrization of $2\times 2$ block-operator contractive and unitary matrices \cite{AG1982, DKW,FoFra1984,ShYa}:

the operator
$U\in\bB(\sH\oplus\sM,\sK\oplus\sN)$
is a contraction if and only if $U$ is of the form
\begin{equation}\label{twee}
U=
\begin{bmatrix} A &D_{A^*}M \cr
KD_{A}&-KA^*M+D_{K^*}XD_{M}\end{bmatrix}: \begin{array}{l}\sH\\\oplus\\\sM\end{array}\to
\begin{array}{l}\sK\\\oplus\\\sN\end{array},
\end{equation}
where
\begin{enumerate}
\item [\rm (a)]
$A \in \bB(\sH,\sK)$, $M\in\bB(\sM,\sD_{A^*})$,
$K\in\bB(\sD_{A},\sN)$ are contractions,
\item [\rm (b)]  $X\in\bB(\sD_{M},\sD_{K^*})$ is a contraction.
\end{enumerate}
Note that
\begin{itemize}
\item if $D_A=0$, then $M=0:\sM\to\{0\}$, and $D_M=I_{\sM},$
\item if $D_{A^*}=0,$ then $K=0:\{0\}\to \sN$, $K^*=0:\sN\to\{0\}$, $D_{K^*}=I_\sN.$
\end{itemize}

The operator $U$
in \eqref{twee} is unitary if and only if (see \cite[Corollary 4.2]{AHS_CAOT})
\begin{equation}
\label{bef26a}
D_XD_M=0,\; D_KD_A=0,\;D_{X^*}D_{K^*}=0,\;D_{M^*}D_{A^*}=0.
\end{equation}
Observe that if a block-operator matrix $U$ of the form \eqref{twee} is a contraction, then due to Lemma \ref{L1} and Theorem \ref{mar14a}(4) one has
\[
D_{A^*}Mf\in\ran (I-A)\Longrightarrow||\left(K[-A^*+D_A(I-A)^{-1}D_{A^*}]M+D_{K^*}XD_M\right)f||\le ||f||
\]
and the equality is valid if $K$, $M^*$, and $X$ are isometries.
Similarly
\[
D_{A}K^*g\in\ran (I-A^*)\Longrightarrow||\left(M^*[-A+D_{A^*}(I-A^*)^{-1}D_{A}]K^*+D_{M}X^*D_{K^*}\right)g||\le ||g||
\]
and the equality is valid if $K$, $M^*$, and $X^*$ are isometries.

The next theorem describes compressions of selfadjoint extensions in terms of blocks of a unitary $2\times 2$ operator matrices acting between deficiency subspaces $\sN_i$ and $\sN_{-i}$.

\begin{theorem}\label{bef25a}
Let $S$ be a non-densely defined closed symmetric operator in $\cH$. Assume
\begin{enumerate}
\item
$\dim (\dom S)^\perp=\infty,$
\item $n_\pm'\ne 0$,
\item the operator $S$ is regular.
\end{enumerate}
Then each unitary operator $U\in\bB(\sN_i,\sN_{-i})$ such that $(U-V_i)f\ne 0$ $f\in\sL_{i}\setminus\{0\}$ takes the block $2\times 2$ operator matrix
\begin{equation}\label{bef25b}
U=
\begin{bmatrix} YV_i &D_{Y^*}M \cr
GD_{Y}V_i&-GY^*M+D_{G^*}XD_{M}\end{bmatrix}:\begin{array}{l}\sL_{i}\\ \oplus\\
\sN'_i\end{array}\to \begin{array}{l}\sL_{-i}\\ \oplus\\
\sN'_{-i}\end{array}.
\end{equation}
where
\begin{itemize}
\item $Y$ is a contraction in $\sL_{-i}$ such that $\ker (I-Y)=\{0\}$, $\dim\sD_{Y}\le n_-'$, $\dim\sD_{Y^*}\le n_+'$,
\item the operators $M^*\in\bB(\sD_{Y^*},\sN_{i}')$,
$G\in\bB(\sD_{Y},\sN_{-i}')$, are isometries and $\dim\sD_M=\dim \sD_{G^*}$,
\item $X\in\bB(\sD_{M},\sD_{G^*})$ is unitary when $\dim\sD_M=\dim \sD_{G^*}\ne 0$.
\end{itemize}
 Define the linear manifolds
\begin{equation}\label{bef26c}
\Omega_Y:=\ran (I-Y)\cap\ran D_{Y^*},\;\Omega_{Y^*}:=\ran (I-Y^*)\cap\ran D_{Y}.
\end{equation}
Then for the selfadjoint extension of $S$
\[
\left\{\begin{array}{l}
\dom \wt S=\dom S\dot+(I-U)\sN_i\\
\wt S(f_S+(I-U)\f_i)=Sf_S+i(I+U)\f_i,\;\;f_S\in\dom S,\; \f_i\in\sN_i
\end{array}\right.
\]
 the following statements are true:
\begin{enumerate}
\item the subspaces  $\ti\sN_i:=\{\varphi\in\sN_i,$ $(U-I)\varphi\in\cH_0\},$ and \\
 $\ti\sN_{-i}:=U\ti\sN_i=\{\phi\in\sN_{-i},$ $(U^{-1}-I)\phi\in\cH_0\},$ (see \eqref{ee3-34} and \eqref{mar5c}) take the form
\begin{equation}
\label{mar5b}
\begin{array}{l}
\ti\sN_i=\ker(P_{\sL_{-i}}U- V_iP_{\sL_i})=\left\{\begin{bmatrix}V_{-i}(I-Y)^{-1}h\cr\psi'_i +M^*D_{Y^*}^{[-1]}h\end{bmatrix}:h\in\Omega_Y,\; \psi'_{i}\in \ker M\right\},\\
\ti\sN_{-i}=\ker(P_{\sL_{i}}U^{-1}- V_{-i}P_{\sL_{-i}})=\left\{\begin{bmatrix} (I-Y^*)^{-1}g\cr\psi'_{-i} +GD_{Y}^{[-1]}g\end{bmatrix}:g\in\Omega_{Y^*},\; \psi'_{-i}\in \ker G^*\right\},
\end{array}
\end{equation}
where $D_{Y^*}^{[-1]}$ ($D_{Y}^{[-1]}$) is the Moore-Penrose pseudoinverse (in particular, $D_{Y^*}^{[-1]}(0)=D_{Y}^{[-1]}(0)=0$);
\item for the domain of $\wt S_0$:
\[\begin{array}{l}
\dom \wt S_0=\dom S_0\dot+(I-U_0)\dom U_0=\dom S\dot +(I-U^{-1}_0)\dom U^{-1}_0,\\[2mm]
\quad\qquad\dom U_0\subseteq\sN'_i,\; \dom U^{-1}_0\subseteq\sN'_{-i}
\end{array}
\]
the isometries $U_0$ and $U_0^{-1}$ defined in \eqref{bef8b} and \eqref{bef8bb} are of the form 
\begin{equation}\label{mar22}
\left\{\begin{array}{l}
\dom U_0=\left\{\f_i'\in\sN'_i: D_{Y^*}M\f_i'\in\Omega_Y\right\}\\[2mm]
U_0\f_i'=\left(G[-Y^*+D_Y(I-Y)^{-1}D_{Y^*}]M+D_{G^*}XD_M\right)\f_i',\;\f_{i}'\in\dom U_0
\end{array}\right.,
\end{equation}
\begin{equation}\label{mar22a}
\left\{\begin{array}{l}
\dom U^{-1}_0=\ran U_0=\left\{\f_{-i}'\in\sN'_{-i}: D_{Y}G^*\f_{-i}'\in\Omega_{Y^*}\right\}\\[2mm]
U^{-1}_0\f_{-i}'=\left(M^*[-Y+D_{Y^*}(I-Y^*)^{-1}D_{Y}]G^*+D_{M}X^*D_{G^*}\right)\f_{-i}',\\
 \f_{-i}'\in\dom U^{-1}_0
\end{array}\right.;
\end{equation}
\item for the deficiency subspaces of $\wt S_0$ the equalities
\begin{equation}\label{bef29bb}
\sN_i(\wt S_0)=\sN'_i\ominus  M^{-1}\{D_{Y^*}^{[-1]}\Omega_Y\},\; \sN_{-i}(\wt S_0)=\sN'_{-i}\ominus G^{*-1}\{D_{Y}^{[-1]}\Omega_{Y^*}\}
\end{equation}
hold.
\end{enumerate}
Moreover,
\item [\rm (I)] the following are equivalent:
\begin{enumerate}
\item [\rm (a)] $\wt S_0=S_0$, 
\item [\rm (b)] $\ker M=\{0\}$ and $\Omega_Y=\{0\}$,
\item [\rm (c)] $\ker G^*=\{0\}$ and $\Omega_{Y^*}=\{0\}$;
\end{enumerate}
\item [\rm (II)]
the following are equivalent:
\begin{enumerate}
\item [\rm (a)]the compression $\wt S_0$ is maximal symmetric with $n_+(\wt S_0)=0$ and $n_-(\wt S_0)>0$,
\item [\rm (b)] $\left\{\begin{array}{l}\ran (I-Y)=\ran (I-Y_R)^\half\\
\ran (I-Y^*)\ne\ran (I-Y_R)^\half\end{array}\right.;$
\end{enumerate}
in particular, if $Y$ is a non-unitary co-isometry ($D_Y^*=0$), then statement (a) holds;
\item [\rm (III)] the following are equivalent:
\begin{enumerate}
\item [\rm (a)]the compression $\wt S_0$ is maximal symmetric with $n_+(\wt S_0)>0$ and $n_-(\wt S_0)=0$,
\item [\rm (b)]$\left\{\begin{array}{l}\ran (I-Y^*)=\ran (I-Y_R)^\half\\
\ran (I-Y)\ne\ran (I-Y_R)^\half\end{array}\right.;$
in particular, if $Y^*$ is a non-unitary isometry ($D_Y=0$), then statement (a) holds;
\item [\rm (IV)] in the case $n_+'=n_-'$ the following are equivalent:
\begin{enumerate}
\item
the compression $\wt S_0$ is selfadjoint,
\item
$\left\{\begin{array}{l}
\ran D_{Y^*}\subseteq\ran (I-Y)\\
\ran D_{Y}\subseteq\ran (I-Y^*)
\end{array}\right..
$
\item $\ran (I-Y^*)=\ran(I-Y)= \ran (I-Y_R)^\half.$
\end{enumerate}
In particular,
\begin{itemize}
\item the selfadjoint extension $\wt S$ is regular if and only if $\ran (I-Y)=\sL_{-i}$, and if this is the case, the compression $\wt S_0$ is selfadjoint,
\item if $Y$ is unitary, then $\wt S_0$ is selfadjoint.
\end{itemize}
\end{enumerate}
\end{theorem}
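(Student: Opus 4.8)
The plan is to read the block form \eqref{bef25b} off the general parametrization \eqref{twee}--\eqref{bef26a} of unitary $2\times2$ block matrices, and then to convert every statement about $\wt S_0$ into a statement about the isometry $U_0$ and, through Theorem \ref{mar14a}, into a range condition on $I-Y$. First I would decompose $U$ with respect to $\sN_i=\sL_i\oplus\sN'_i$, $\sN_{-i}=\sL_{-i}\oplus\sN'_{-i}$ and set $A:=P_{\sL_{-i}}U\uphar\sL_i$, $Y:=P_{\sL_{-i}}UV_{-i}\uphar\sL_{-i}$, so that $A=YV_i$ by construction and $\ker(I-Y)=\{0\}$ is exactly the admissibility of $U$. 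Since $V_i$ maps $\sL_i$ unitarily onto $\sL_{-i}$ one has $D_{A^*}=D_{Y^*}$, $D_A=V_i^*D_YV_i$, $\sD_A=V_i^*\sD_Y$; putting $G:=KV_i^*\uphar\sD_Y$ and using the intertwining identity $Y^*\sD_{Y^*}\subseteq\sD_Y$ to rewrite $KA^*M=GY^*M$ (with $D_{K^*}=D_{G^*}$) turns \eqref{twee} into \eqref{bef25b}. The relations \eqref{bef26a} then say precisely that $M^*$ and $G$ are isometries (from $D_{M^*}D_{A^*}=0$ and $D_KD_A=0$) and that $X$ is unitary (from $D_XD_M=0$ and $D_{X^*}D_{K^*}=0$), which forces $\dim\sD_M=\dim\sD_{G^*}$ and the bounds $\dim\sD_{Y^*}\le n'_+$, $\dim\sD_Y\le n'_-$.

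For the computational core I would compute $\ti\sN_i$ directly: a vector $f_i\oplus\f'_i$ lies in $\ker(P_{\sL_{-i}}U-V_iP_{\sL_i})$ iff $(I-Y)V_if_i=D_{Y^*}M\f'_i$, so that $h:=D_{Y^*}M\f'_i$ must lie in $\Omega_Y$, $f_i=V_{-i}(I-Y)^{-1}h$, and, since $M$ is a co-isometry onto $\sD_{Y^*}$ while $D_{Y^*}$ is injective there, $M\f'_i=D_{Y^*}^{[-1]}h$, i.e. $\f'_i=M^*D_{Y^*}^{[-1]}h+\psi'_i$ with $\psi'_i\in\ker M$; this is \eqref{mar5b}, and $\ti\sN_{-i}$ is the mirror computation for $U^{-1}=U^*$. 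Statement (2) then follows from the Schur-complement description of Remark \ref{mar23a}: from $U_{11}-V_i=(Y-I)V_i$ one gets $\ran(U_{11}-V_i)=\ran(I-Y)$ and $(U_{11}-V_i)^{-1}=V_i^*(Y-I)^{-1}$, giving \eqref{mar22}--\eqref{mar22a}; that $U_0$ is an isometry is exactly the ``moreover'' clause of Lemma \ref{L1} combined with Theorem \ref{mar14a}(4) applied to $h=M\f'_i$. Statement (3) is the orthogonal complement in Proposition \ref{bef15b}(2), once $\dom U_0$ is rewritten as $M^{-1}\{D_{Y^*}^{[-1]}\Omega_Y\}$.

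The assertions (I)--(IV) I would organise around the principle that $\wt S_0$ is determined by the isometry $U_0$. For (I) one has $\wt S_0=S_0$ iff $\dom\wt S\cap\cH_0=\dom S$, i.e. $\ti\sN_i=\{0\}$ by Proposition \ref{bef15b}(4); the description \eqref{mar5b} shows this happens exactly when $\Omega_Y=\{0\}$ and $\ker M=\{0\}$, and the equivalence with the $(Y^*,G^*)$-version uses Theorem \ref{mar14a}(2) ($\Omega_Y=\{0\}\iff\Omega_{Y^*}=\{0\}$) together with $\dim\sD_M=\dim\sD_{G^*}$ ($\ker M=\{0\}\iff\ker G^*=\{0\}$). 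For (IV), $\wt S_0$ is selfadjoint iff $U_0$ is unitary, i.e. $\dom U_0=\sN'_i$ and $\ran U_0=\sN'_{-i}$; the decisive identity is $\dom U_0=\sN'_i\iff D_{Y^*}\sD_{Y^*}\subseteq\ran(I-Y)\iff\ran D_{Y^*}\subseteq\ran(I-Y)$ (and symmetrically for $\ran U_0$), after which Theorem \ref{mar14a}(5),(6),(7) turns the two inclusions into the range equalities (b) and (c). The regularity clause is then immediate: $\wt S$ is regular iff $\ran(I-Y)=\sL_{-i}$ by \eqref{juni30aa}, and, with $\ker(I-Y)=\{0\}$, the open mapping theorem makes $I-Y$ a bounded bijection, so both inclusions hold trivially and $\wt S_0$ is selfadjoint; the unitary case is clear since then $D_Y=D_{Y^*}=0$.

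Statements (II) and (III) are where I expect the real difficulty. The range equalities of type \eqref{mar15a} require $\dom U_0$ (resp. $\ran U_0$) to be \emph{all} of $\sN'_i$ (resp. $\sN'_{-i}$), not merely dense, whereas $n_+(\wt S_0)=0$ a priori only gives density of $\dom U_0$; indeed the bounded preimage $\dom U_0=(D_{Y^*}M)^{-1}\{\ran(I-Y)\}$ of a non-closed manifold can be dense without being the whole space, so density alone does not yield $\ran D_{Y^*}\subseteq\ran(I-Y)$. The resolution is that the hypothesis ``$\wt S_0$ maximal symmetric'' forces $\wt S_0$ to be closed, hence $\dom U_0$ closed by Proposition \ref{bef15b}(3), so that $n_+(\wt S_0)=0$ upgrades to the exact equality $\dom U_0=\sN'_i$; Theorem \ref{mar14a}(5) then gives $\ran(I-Y)=\ran(I-Y_R)^\half$, while $n_-(\wt S_0)>0$ reads $\ran U_0\neq\sN'_{-i}$, i.e. $\ran(I-Y^*)\neq\ran(I-Y_R)^\half$ by Theorem \ref{mar14a}(6); (III) follows by interchanging the roles of $Y$ and $Y^*$. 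The co-isometry special cases drop out of Theorem \ref{mar14a}(1) (so $D_{Y^*}=0$ yields $\ran(I-Y)=\ran(I-Y_R)^\half$) together with Remark \ref{apr26aa}.
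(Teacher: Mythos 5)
Your proposal follows the same route as the paper's proof: the block form is read off the parametrization \eqref{twee}--\eqref{bef26a} with $A=YV_i$, the subspaces $\ti\sN_{\pm i}$ are computed from the kernel conditions $(I-Y)V_if_i=D_{Y^*}M\f'_i$ and $(I-Y^*)g_{-i}=D_YG^*\f'_{-i}$, statements (2)--(3) come from Proposition \ref{bef15b} together with the Schur complement of Remark \ref{mar23a}, and (I)--(IV) are reduced through the isometry $U_0$ to the range dichotomies of Theorem \ref{mar14a}. Your treatment of (II)--(III) is in fact more explicit than the paper's: the chain \eqref{mar15bb} silently identifies ``$n_+(\wt S_0)=0$'' with ``$\dom U_0=\sN'_i$'', and your observation that maximal symmetry forces $\wt S_0$ to be closed, hence $\dom U_0$ to be a subspace by Proposition \ref{bef15b}(3), so that density upgrades to equality, is exactly the justification that step needs.

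The one genuine gap is the regularity clause of (IV). You justify ``$\wt S$ is regular iff $\ran(I-Y)=\sL_{-i}$'' by citing \eqref{juni30aa}, but that display is only the introduction's announcement of this very statement, so the citation is circular; and your open-mapping argument only yields selfadjointness of $\wt S_0$ when $\ran(I-Y)=\sL_{-i}$, not regularity of $\wt S$ in either direction. The paper's argument is short but not trivial: by Remark \ref{mar9bb}, $\wt S$ is regular iff $(U-V_i)\sL_i$ is a subspace, i.e. iff $(U-V_i)\uphar\sL_i$ is bounded below, and from \eqref{bef25b} (using that $G$ is isometric) one has $\|(U-V_i)f_i\|^2=\|(I-Y)V_if_i\|^2+\|D_YV_if_i\|^2$. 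Sufficiency of $\ran(I-Y)=\sL_{-i}$ is then immediate, but for necessity one must pass from $\|(I-Y)h\|^2+\|D_Yh\|^2\ge c\|h\|^2$ to $I-Y$ being bounded below, which the paper does via the identity $\|(I-Y)h\|^2+\|D_Yh\|^2=2\RE((I-Y)h,h)\le 2\|h\|\,\|(I-Y)h\|$. Your proposal needs this (or an equivalent) argument to be complete.
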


\begin{proof} Note, that
in accordance with Theorem \ref{mar14a} for the linear manifolds $\Omega_Y$ and $\Omega_{Y^*}$ defined in \eqref{bef26c} the equivalence
$$\Omega_Y=\{0\}\Longleftrightarrow\Omega_{Y^*}=\{0\}$$
holds.

The block-operator matrix \eqref{bef25b} is of the form \eqref{twee} with $A=YV_i$, $D_A=V_{-i}D_YV_i$, $D_{A^*}=D_Y$, $G=KV_{-i}$ and due to properties of the operators $K,M,X$, conditions \eqref{bef26a} hold true. Moreover,
if $D_Y=0$, then in \eqref{bef25b} the operator $G=0:\{0\}\to\sN'_{-i}$ and $G^*=0:\sN'_{-i}\to\{0\}$ and $D_{G^*}=I_{\sN'_{-i}}$,
respectively $D_{Y^*}=0\Longrightarrow M^*=0:\{0\}\to\sN_{i}'$ and $M=0:\sN'_{i}\to\{0\}$ and $D_{M}=I_{\sN'_{i}}.$

It follows that $U$ is unitary, the condition $\ker(I-Y)=0$ gives that $\ker (U-V_i)\uphar\sL_i=\{0\}$.
Hence, the operator $\wt S$ defined by means of $U$ is a selfadjoint extension of $S$.

The operator $U^*=U^{-1}$ takes the form
\[
U^{-1}=
\begin{bmatrix} V_{-i}Y^* &V_{-i}D_YG^* \cr
M^*D_{Y^*}&-M^*YG^*+D_MX^*D_{G^*}\end{bmatrix}:\begin{array}{l}\sL_{-i}\\ \oplus\\
\sN'_{-i}\end{array}\to \begin{array}{l}\sL_{i}\\ \oplus\\
\sN'_{i}\end{array}.
\]
We will use Proposition \ref{bef15b}.
Then we have
\[\begin{array}{l}
\ker(P_{\sL_{-i}}U- V_iP_{\sL_i})=\left\{\begin{bmatrix} f_i\cr\f'_i \end{bmatrix}:(I-Y)V_if_i=D_{Y^*}M \f_i',\;f_i\in\sL_i,\; \f_i'\in\sN_i'\right\};\\[2mm]
\ker(P_{\sL_{i}}U^{-1}- V_{-i}P_{\sL_{-i}})=\left\{\begin{bmatrix}g_{-i}\cr\f'_{-i} \end{bmatrix}:(I-Y^*)g_{-i}=D_{Y}G^* \f_{-i}',\;g_{-i}\in\sL_{-i}. \f_{-i}'\in\sN_{-i}'\right\}.
\end{array}
\]
Since $\ran M=\sD_{Y^*}$ and $\ran G^*=\sD_Y$, from \eqref{mar5} and \eqref{march17} we arrive at \eqref{mar5b}.

From the equalities
\[
\begin{array}{l}
P_{\sN'_{i}}\ker(P_{\sL_{-i}}U- V_iP_{\sL_i})=\ker M\oplus M^*D_{Y^*}^{[-1]}\Omega_Y= M^{-1}\{D_{Y^*}^{[-1]}\Omega_Y\},\\[2mm]
P_{\sN'_{-i}}\ker(P_{\sL_{i}}U^{-1}- V_{-i}P_{\sL_{-i}})=G^{*-1}\{D_{Y}^{[-1]}\Omega_{Y^*}\},
\end{array}
\]
we get the equivalences:
\[
\begin{array}{l}
\ker(P_{\sL_{-i}}U- V_iP_{\sL_i})=\ker M\Longleftrightarrow\Omega_Y=\{0\},\\
\dom\wt S\cap\cH_0=\dom S\Longleftrightarrow \ker(P_{\sL_{-i}}U- V_iP_{\sL_i})=\{0\}\\
\Longleftrightarrow P_{\sN'_{i}}\ker(P_{\sL_{-i}}U- V_iP_{\sL_i})=\{0\}\Longleftrightarrow \left\{\begin{array}{l}\ker M=\{0\}\\
\Omega_Y=\{0\}\end{array}\right.,
\end{array}
\]
\[
\begin{array}{l}
\ker(P_{\sL_{i}}U^{-1}- V_{-i}P_{\sL_{-i}})=\ker G^*\Longleftrightarrow \Omega_{Y^*}=\{0\},\\
\dom\wt S\cap\cH_0=\dom S\Longleftrightarrow \ker(P_{\sL_{i}}U^{-1}- V_{-i}P_{\sL_{-i}})=\{0\}\\
\Longleftrightarrow P_{\sN'_{-i}}\ker(P_{\sL_{i}}U^{-1}- V_{-i}P_{\sL_{-i}})=\{0\}\Longleftrightarrow \left\{\begin{array}{l}\ker G^*=\{0\}\\
\Omega_{Y^*}=\{0\}\end{array}\right..
\end{array}
\]
Since $M(\psi_i+M^*D_{Y^*}^{[-1]}h)=D_{Y^*}^{[-1]}h$ for $\psi'_i\in\ker M$, $h\in\Omega_Y$, and $G^*(\psi_{-i}+GD_{Y}^{[-1]}g)=D_{Y}^{[-1]}g$ for $\psi'_{-i}\in\ker G^*$, $g\in\Omega_{Y^*}$,
 expressions for $U_0$, $U^{-1}_0$ in \eqref{mar22} and \eqref{mar22a} follow from \eqref{bef8}, \eqref{mar5b}, and Remark \ref{mar23a}.

Now the representations in \eqref{bef8} imply that the deficiency subspaces of the operator $\wt S_0$ are of the form \eqref{bef29bb}.
Hence
\[
n_{+}(\wt S_0)=\dim (\sN_i'\ominus M^{-1}\{D_{Y^*}^{[-1]}\Omega_Y\}),\; n_-(\wt S_0)=\dim \left(\sN_{-i}'\ominus G^{*-1}\{D_{Y}^{[-1]}\Omega_{Y^*}\}\right).
\]
Besides
\begin{equation}
\label{mar15bb}
\begin{array}{l}
\Omega_{Y}=\{0\}\Longleftrightarrow \Omega_{Y^*}=\{0\}\Longleftrightarrow M^{-1}\{D_{Y^*}^{[-1]}\Omega_{Y}\}=\ker M\\[2mm]
\Longleftrightarrow G^{*-1}\{D_{Y}^{[-1]}\Omega_{Y^*}\}=\ker G^*\Longrightarrow\left\{\begin{array}{l}n_+(\wt S_0)=\dim\sD_{Y^*}\\[2mm]
n_-({\wt S_0})=\dim\sD_{Y}\end{array}\right.,\\[2mm]
\Omega_Y=\ran D_{Y^*}\Longleftrightarrow M^{-1}\{D_{Y^*}^{[-1]}\Omega_{Y}\}=M^{-1}\{\sD_{Y^*}\}=\sN'_i\Longleftrightarrow n_{+}(\wt S_0)=0,\\[2mm]
\Omega_{Y^*}=\ran D_{Y}\Longleftrightarrow G^{*-1}\{D_{Y}^{[-1]}\Omega_{Y^*}\}=G^{*-1}\{\sD_{Y}\}=\sN'_{-i}\Longleftrightarrow n_{-}(\wt S_0)=0.
\end{array}
\end{equation}
The equivalences in (4)--(7) follow from \eqref{mar5b}, 
\eqref{mar15bb}, Theorem \ref{mar14a} and Remark \ref{apr26aa}.

Suppose $\ran (I-Y)=\sL_{-i}.$ Then $\ran D_{Y^*}\subset \ran(I-Y)$ and $\ran D_{Y}\subset \ran(I-Y^*)$ and $\wt S_0$ is selfadjoint in $\cH_0$. Besides, because for $f_i\in\sL_{i}$ from \eqref{bef25b}
\[
||(U-V_i)f_i||^2=||(Y-I)V_if_i||^2+||D_YV_i f_i||^2\ge c||V_if_i||^2=c||f_i||^2
\]
 for some $c>0$, we get that $\ran (U-V_i)\sL_i$ is a subspace in $\sN_{-i}$. Due to Remark \ref{mar9bb}, the selfadjoint extension $\wt S$ is regular.

Conversely, suppose $\wt S$ is a regular selfadjoint extension of $S$, then $\ran (U-V_i)\sL_i$ is a subspace in $\sN_{-i}$. Hence
\[
||(U-V_i)f_i||^2=||(Y-I)V_if_i||^2+||D_YV_i f_i||^2\ge c||f_i||^2 \;\;\forall f_i\in\sL_i
\]
with some $c>0$.
Since
\[
\begin{array}{l}
||(I-Y)h||^2+||D_Yh||^2=2||h||^2-2\RE(Yh,h)=2\RE((I- Y)h,h)\le 2|((I- Y)h,h)|\\[2mm]
\le 2||h||||(I-Y)h||\;\;\forall h\in\sL_{-i},
\end{array}
\]
the inequality $||(I-Y)h||^2+||D_Yh||^2\ge c||h||^2$
implies
\[
||(I-Y)h||\ge \half c||h||\;\;\forall h\in\sL_{-i}.
\]
Hence $\ran (I-Y)=\sL_{-i}$.
\end{proof}
\begin{remark} \label{mar29a}
Due to Theorem \ref{mar14a} and Lemma \ref{L1} the operators $U_0$ and $U^{-1}_0$ defined in \eqref{mar22} and \eqref{mar22a}
are isometric on $\dom U_0$ and $\dom U^{-1}_0$, respectively.
\end{remark}
\begin{theorem}\label{bef29ab}
Let $S$ be a non-densely defined closed symmetric operator in $\cH$ and assume
\begin{enumerate}
\item
$\dim (\dom S)^\perp=\infty,$
\item the operator $S$ is regular.
\end{enumerate}
Let $\wt S_0=S_0$ or let $\wt S_0$ be a symmetric extension of $S_0$. Then  there exists a selfadjoint extension $\wt S$ of $S$ in $\cH$ such that the compression
$P_{\cH_0}\wt S\uphar\dom (\wt S\cap\cH_0)$ coincides with $\wt S_0$.
\end{theorem}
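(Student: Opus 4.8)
\emph{The plan is to realize $\wt S_0$ as a compression by selecting the unitary parameter $U$ of Theorem~\ref{bef25a} so that its Schur--complement compression \eqref{mar22} reproduces the von Neumann parameter of $\wt S_0$.} First I would encode $\wt S_0$. Since $S$ is regular, $S_0$ is a densely defined closed symmetric operator in $\cH_0$ (Theorem~\ref{ivekmzy}) with deficiency subspaces $\sN'_i$, $\sN'_{-i}$. Hence von Neumann's description \eqref{feb7}, applied to $S_0$ in $\cH_0$, represents the closed symmetric extension $\wt S_0$ by an isometry $W$ from a subspace $\cD\subseteq\sN'_i$ onto a subspace $\cR\subseteq\sN'_{-i}$, with $\dom\wt S_0=\dom S_0\dot+(I-W)\cD$. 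Put $\cD^c:=\sN'_i\ominus\cD$ and $\cR^c:=\sN'_{-i}\ominus\cR$. By Theorem~\ref{bef25a} every selfadjoint extension $\wt S$ of $S$ is given by a unitary $U$ of the block form \eqref{bef25b}, built from a contraction $Y$ on $\sL_{-i}$ with $\ker(I-Y)=\{0\}$, isometries $M^*\in\bB(\sD_{Y^*},\sN'_i)$ and $G\in\bB(\sD_Y,\sN'_{-i})$, and a unitary $X\in\bB(\sD_M,\sD_{G^*})$; the compression then carries the parameter $U_0$ of \eqref{mar22}. Thus it suffices to choose $Y,M,G,X$ with $U_0=W$.

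The key is to make the $Y$-coupling inert on $\cD$. I would pick $Y$ on the infinite-dimensional space $\sL_{-i}$ (note $\dim\sL_{-i}=\dim\sL=\infty$, since $P_{\sN_{-i}}\uphar\sL$ is injective by \eqref{KRASn}) with $\ker(I-Y)=\{0\}$, with $\Omega_Y:=\ran(I-Y)\cap\ran D_{Y^*}=\{0\}$ (see \eqref{bef26c}), and with prescribed defect dimensions $\dim\sD_{Y^*}=\dim\cD^c$ and $\dim\sD_Y=\dim\cR^c$. I would then take $M:\sN'_i\to\sD_{Y^*}$ to be the co-isometry with $\ker M=\cD$ (so that $M^*$ is isometric and $\sD_M=\cD$), take $G:\sD_Y\to\sN'_{-i}$ to be an isometry with $\ran G=\cR^c$ (so that $\sD_{G^*}=\cR$), and set $X:=W:\cD=\sD_M\to\cR=\sD_{G^*}$, which is unitary because $\dim\sD_M=\dim\cD=\dim\cR=\dim\sD_{G^*}$. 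All remaining constraints of Theorem~\ref{bef25a} hold, since $\dim\sD_{Y^*}=\dim\cD^c\le n'_+$ and $\dim\sD_Y=\dim\cR^c\le n'_-$.

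The verification is then short. Because $D_{Y^*}$ is selfadjoint, $\ker D_{Y^*}=\sD_{Y^*}^{\perp}$, so $\Omega_Y=\{0\}$ forces the domain in \eqref{mar22} to collapse: $D_{Y^*}M\f'_i\in\Omega_Y=\{0\}$ holds iff $M\f'_i\in\sD_{Y^*}\cap\sD_{Y^*}^{\perp}=\{0\}$, that is iff $\f'_i\in\ker M=\cD$; hence $\dom U_0=\cD$, which by \eqref{bef29bb} yields the correct deficiency subspaces of $\wt S_0$. For $\f'_i\in\cD$ one has $M\f'_i=0$, so the first summand of \eqref{mar22} vanishes, while $D_M=P_{\sD_M}$ and $D_{G^*}=P_{\sD_{G^*}}$ act as identities on $\sD_M=\cD$ and $\sD_{G^*}=\cR$; therefore $U_0\f'_i=D_{G^*}XD_M\f'_i=X\f'_i=W\f'_i$. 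Thus $U_0=W$. Since $\ker(I-Y)=\{0\}$ guarantees admissibility and selfadjointness of the extension $\wt S$ determined by $U$, Theorem~\ref{bef25a} gives $P_{\cH_0}\wt S\uphar(\dom\wt S\cap\cH_0)=\wt S_0$. The case $\wt S_0=S_0$ is just the instance $\cD=\cR=\{0\}$.

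The main obstacle is the existence of the contraction $Y$ with $\ker(I-Y)=\{0\}$, $\Omega_Y=\{0\}$, and \emph{arbitrarily prescribed} (possibly unequal, possibly infinite) defect dimensions $\dim\sD_{Y^*}$, $\dim\sD_Y$; this is precisely where $\dim\sL_{-i}=\infty$ is indispensable (cf.\ Remark~\ref{juni22a}). I would assemble $Y$ as an orthogonal sum of elementary blocks on copies of $\ell^2(\dN)$: the unilateral forward shift $S$ gives a block with $\dim\sD_Y=0$, $\dim\sD_{Y^*}=1$, and its adjoint $S^*$ gives $\dim\sD_Y=1$, $\dim\sD_{Y^*}=0$, each satisfying $\ker(I-Y)=\{0\}$ and $\Omega_Y=\{0\}$; a unitary block with no eigenvalue $1$ supplies any remaining infinite-dimensional part. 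As $\ker(I-Y)$, the defect spaces, and the intersection $\ran(I-Y)\cap\ran D_{Y^*}$ all split along the orthogonal sum, the assembled $Y$ inherits all required properties, and the infinite supply of coordinates lets $\dim\sD_{Y^*}$ and $\dim\sD_Y$ be matched to $\dim\cD^c$ and $\dim\cR^c$.
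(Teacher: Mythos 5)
Your proposal is correct and follows essentially the same route as the paper: encode $\wt S_0$ by its von Neumann isometry $W:\cD\to\cR$, choose a contraction $Y$ on $\sL_{-i}$ with $\ker(I-Y)=\{0\}$, $\Omega_Y=\{0\}$ and defect dimensions matching $\dim(\sN_i'\ominus\cD)$ and $\dim(\sN_{-i}'\ominus\cR)$, take $M$ a co-isometry with $\ker M=\cD$, $G$ an isometry with $\ran G=\sN_{-i}'\ominus\cR$, and $X=W$, so that \eqref{mar22} collapses to $U_0=D_{G^*}XD_M=W$ on $\dom U_0=\ker M=\cD$. The only cosmetic difference is that you build $Y$ explicitly from unilateral shifts and a unitary without eigenvalue $1$, whereas the paper obtains it from Proposition \ref{apr7a} as a Cayley transform of a Shtraus extension; both supply the required $Y$.
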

\begin{proof}
We will use Theorem \ref{bef25a}, expressions in \eqref{mar22}, \eqref{mar22a}, and Proposition \ref{apr7a}.

For the domain $\dom\wt S_0$ one has
\[
\dom \wt S_0=\dom S\dot+(I-\wt U_0)\dom \wt U_0,\;\dom \wt U_0\subseteq \sN_i',
\]
where $\wt U_0$ maps $\dom \wt U_0$ isometrically into $\sN_{-i}'$.
If $\wt S_0=S_0$, then $\dom \wt U_0=\{0\}$.

Set
\[
\wt \sN_i':=\sN_i'\ominus\dom\wt U_0,\; \wt \sN_{-i}':=\sN_{-i}'\ominus\ran \wt U_0.
\]
Note that if $\wt S_0$ is a maximal symmetric extension of $S_0$, then $\dom \wt U_0=\sN_i'$, $\wt \sN_i'=\{0\}$ when $n_+'\le n_-'$
and $\ran \wt U_0=\sN'_{-i}$, $\wt \sN_{-i}'=\{0\}$ when $n_+'\ge n_-'.$

By Proposition \ref{apr7a} there exists in $\sL_{-i}$ a contraction $Y$ such that
$$\ker (I-Y)=\{0\}, \;\Omega_Y=\{0\},\;\dim \sD_{Y}=\dim \wt \sN_i',\; \dim \sD_{Y^*}=\wt \sN_{-i}'.$$
Further construct a unitary $U$ of the form \eqref{bef25b} with isometry $G\in\bB(\sD_{Y},\sN_{-i}')$, co-isometry $M\in\bB(\sN_{i}',\sD_{Y^*})$ 
such that
\[
\ran M^*=\wt\sN_i'\;(\Longleftrightarrow\ker M=\dom\wt U_0),\;\ran G=\wt\sN'_{-i}\;(\Longleftrightarrow\ker G^*= \ran\wt U_0).\\
\]
Then $\sD_M=\dom \wt U_0$, $\sD_{G^*}=\ran \wt U_0$. Set $X:=\wt U_0\in\bB(\sD_M,\sD_{G^*})$.

From \eqref{mar22} we get that
$
U_0=D_{G^*}XD_{M}=\wt U_0.
$
Then for the selfadjoint extension $\wt S$ of $S$ in $\cH$ with
$\dom \wt S=\dom S\dot+(I-U)\sN_i$
we get that the compression of $\wt S$ coincides with given $\wt S_0.$

 In particular,
\begin{enumerate}
\item
if $\wt S_0=S_0$, then $U$ is of the form
$
U=\begin{bmatrix} YV_i &D_{Y^*}M \cr
GD_{Y}V_i&-GY^*M\end{bmatrix}:\begin{array}{l}\sL_i\\ \oplus\\
\sN_i'\end{array}\to \begin{array}{l}\sL_{-i}\\ \oplus\\
\sN_{-i}'\end{array},
$
where $M\in\bB(\sN_i',\sD_{Y^*})$ and $G\in\bB(\sD_{Y},\sN_{-i}')$ are unitary operators; it follows from \eqref{mar22} that $\dom U_0=\{0\}$;
\item
 if (a) $\wt S_0$ is a maximal symmetric extension of $S_0$ with $\dom \wt U_0=\sN'_i$ (the case $n_+'\le n_-')$, then take a co-isometry $Y$ ($D_{Y^*}=0$); hence
$M=0:\sN_{i}'\to\{0\}$, $D_{M}=I_{\sN_i'}$, $D_{G^*}=P_{\ran \wt U_0}$, i.e.,
$
U=\begin{bmatrix} YV_i &0 \cr
GD_{Y}V_i& \wt U_0\end{bmatrix}:\begin{array}{l}\sL_i\\ \oplus\\
\sN_i'\end{array}\to \begin{array}{l}\sL_{-i}\\ \oplus\\
\sN_{-i}'\end{array};
$

(b) $\wt S_0$ is a maximal symmetric extension of $S_0$ with $\ran\wt U_0=\sN'_{-i}$ (the case $n_-'\le n_+')$, then $Y$ is a non-unitary isometry and
$ 
U=\begin{bmatrix} YV_i &D_{Y^*}M \cr
0& \wt U_0P_{\dom \wt U_0}\end{bmatrix}:\begin{array}{l}\sL_i\\ \oplus\\
\sN_i'\end{array}\to \begin{array}{l}\sL_{-i}\\ \oplus\\
\sN_{-i}'\end{array};
$
\item
if $\wt S_0$ is a selfadjoint extension of $S_0$ ($n_+'=n_-'$), then $\dom U_0=\sN_i'$, $\ran U_0=\sN'_{-i}$, $Y$ an arbitrary unitary in $\sL_{-i}$ with $\ker (I-Y)=\{0\}$,
$
U=\begin{bmatrix} YV_i &0 \cr
0&\wt U_0\end{bmatrix}:\begin{array}{l}\sL_{i}\\ \oplus\\
\sN'_i\end{array}\to \begin{array}{l}\sL_{-i}\\ \oplus\\
\sN'_{-i}\end{array}.
$
\end{enumerate}
\end{proof}

\section{Compressions of maximal dissipative extensions}\label{apr11c}

Let $S$ be a closed non-densely defined symmetric operator in $\cH$ and, as before, $\cH_0:=\cdom S$, $S_0:=P_{\cH_0}S.$ If $\wt S$ is a maximal dissipative extension of $S$ in $\cH$, then its adjoint $\wt S^*$ is maximal accumulative extension of $S$ i.e., $-\wt S^*$ is maximal dissipative extension of $-S$.
Set
$$\wt S_{*0}:=P_{\cH_0}\wt S^*\uphar(\dom\wt S^*\cap\cH_0).$$
Clearly $\wt S_0$ and $-\wt S_{*0}$ are dissipative operators,
$
S_0\subseteq \wt S_0,\;S_0\subseteq \wt S_{*0}
$
and
\begin{equation}\label{jul14a}
(\wt S_0f,g)=(f, \wt S_{*0}g)\;\;\forall f\in\dom \wt S_0,\;\forall g\in\dom \wt S_{*0}.
\end{equation}
Moreover, the following are equivalent:
\begin{enumerate}
\def\labelenumi{\rm (\roman{enumi})}
\item the operators $\wt S_0$ and $-\wt S_{*0}$ are maximal dissipative in $\cH_0$;
\item $(\wt S_0)^*=\wt S_{*0};$
\end{enumerate}
If $\codim \cH_0<\infty$, then according to \cite{Nud} the equality $\wt S_{*0}=S^*_0$  holds.

If $n_+'=0$ ($n_-'=0$), then  $\wt S_0=S_0$ (respectively, $\wt S_{*0}=S_0$).

Let us formulate an analogue of Proposition \ref{bef15b}.
\begin{proposition}
\label{mar11a}
Let $S$  be a non-densely defined regular symmetric operator $S$ 
and let $\wt S$ be a maximal dissipative
 extension of $S$,
\[
\dom \wt S=\dom S\dot+(I-U)\sN_i,
\]
where $U\in\bB(\sN_i,\sN_{-i})$ is a contraction and $\ker (U-V_i)\uphar\sL_i=\{0\}.$
Then
\begin{enumerate}
\item the equalities
\[
\left\{\begin{array}{l}
\dom \wt S\cap \cH_0=\dom S\dot+(I-U)\ker(P_{\sL_{-i}}U- V_iP_{\sL_i})\\[2mm]
\qquad\qquad =\dom S\dot+ (I-P_{\sN'_{-i}}U)P_{\sN'_i}\ker(P_{\sL_{-i}}U- V_iP_{\sL_i}),\\[2mm]
\dom \wt S^*\cap \cH_0= \dom S\dot + (I-U^{*})\ker (V_{-i}P_{\sL_{-i}}-P_{\sL_{i}}U^{*})\\[2mm]
=\dom S\dot+ (I-P_{\sN'_{i}}U^{*})P_{\sN'_{-i}}\ker(V_{-i} P_{\sL_{-i}}-P_{\sL_{i}}U^{*})
\end{array}\right.
\]
hold;
 \item the operators $U_0$ and $U_{*0}$ defined as follows
\begin{equation}\label{may5a}
\left\{\begin{array}{l}
 U_0:P_{\sN_i'}\f\mapsto P_{\sN'_{-i}}U\f,\;\;\f\in \ker (P_{\sL_{-i}}U- V_iP_{\sL_i}),\\
U_{*0}:P_{\sN_{-i}'}\psi\mapsto P_{\sN'_{i}}U^*\psi,\;\;\psi\in \ker (P_{\sL_{i}}U^*- V_{-i}P_{\sL_{-i}})
\end{array}\right.
\end{equation}
are contractions and
the following are valid:
\begin{equation}\label{jul14bb}
\begin{array}{l}
\left\{\begin{array}{l}\dom \wt S_0=\dom S\dot+ (I-U_0)\dom U_0,\\
\wt S_0(f_S+(I-U_0)\f_i')=S_0f_S+i(I+U_0)\f_i',\; f_S\in\dom S,\;\f_i'\in\dom U_0,
\end{array}\right.\\[2mm]
\left\{\begin{array}{l}\; \dom \wt S_{*0}=\dom S\dot+ (I-U_{*0})\dom U_{*0},\\
\wt S_{*0}(g_S+(I-U_{*0})\f_{-i}')=S_0g_S-i(I+U_{*0})\f_{-i}',\; g_S\in\dom S,\;\f_{-i}'\in\dom U_{*0}\end{array}\right.,
\end{array}
\end{equation}
\begin{equation}
\label{juni28ff}
(U_0\f_{i}, \psi_{-i})=(\f_{i}, U_{*0}\psi_{-i})\;\;\forall \f_{i}\in\dom U_0,\;\forall \psi_{-i}\in\dom U_{*0};
\end{equation}
\item the following are equivalent:
\begin{enumerate}
\def\labelenumi{\rm (\roman{enumi})}
\item $\dom\wt S\cap\cH_0=\dom S,$
\item $\ker(P_{\sL_{-i}}U- V_iP_{\sL_i})=\{0\},$
\item $P_{\sN'_i}\ker(P_{\sL_{-i}}U- V_iP_{\sL_i})=\{0\}$;
\end{enumerate}
\item the following are equivalent:
\begin{enumerate}
\item $\dom\wt S^*\cap\cH_0=\dom S,$
\item $\ker (V_{-i} P_{\sL_{-i}}-P_{\sL_{i}}U^{*})=\{0\}$,
\item $P_{\sN'_{-i}}\ker(V_{-i} P_{\sL_{-i}}-P_{\sL_{i}}U^{*})=\{0\};$
\end{enumerate}
in particular,
if one of the semi-deficiency subspaces is trivial, then $\dom \wt S\cap \cH_0=\dom S$ and the operator
$\wt S_0:=P_{\cH_0}\wt S\uphar(\dom \wt S\cap \cH_0)$ is maximal symmetric;
\item
the following are equivalent:
\begin{enumerate}
\def\labelenumi{\rm (\roman{enumi})}
\item operator $\wt S_0$ is maximal dissipative in $\cH_0,$
\item $P_{\sN'_i}\ker(P_{\sL_{-i}}U- V_iP_{\sL_i})=\sN'_i$; 
\end{enumerate}
\item
the following are equivalent:
\begin{enumerate}
\def\labelenumi{\rm (\roman{enumi})}
\item the operator $-\wt S_{*0}$ is maximal dissipative in $\cH_0,$
\item $P_{\sN'_{-i}}\ker(V_{-i} P_{\sL_{-i}}-P_{\sL_{i}}U^{*})=\sN'_{-i}.$
\end{enumerate}
\item
the following are equivalent:
\begin{enumerate}
\def\labelenumi{\rm (\roman{enumi})}
\item $\wt S_{*0}=(\wt S_0)^*$;
\item $\left\{\begin{array}{l} P_{\sN'_i}\ker(P_{\sL_{-i}}U- V_iP_{\sL_i})=\sN'_i\\
P_{\sN'_{-i}}\ker(V_{-i} P_{\sL_{-i}}-P_{\sL_{i}}U^{*})=\sN'_{-i}\end{array}\right.;$
\end{enumerate}
\item if $\codim\cH_0<\infty$, then $\wt S_{*0}=(\wt S_0)^*.$
\end{enumerate}
\end{proposition}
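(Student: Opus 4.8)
The plan is to transcribe the scheme of Proposition~\ref{bef15b}, paying attention to the two points where contractivity of $U$ (rather than unitarity) changes the picture: the parametrizing operator $U_0$ is now only a contraction, and the adjoint $\wt S^*$ must be handled separately through $U^*$. By \eqref{ljghfci} and \eqref{ljghfci2} with $\lambda=i$ and $M=U$ one has $\dom\wt S=\dom S\dot+(I-U)\sN_i$ and $\dom\wt S^*=\dom S\dot+(I-U^*)\sN_{-i}$, so the analysis of $\wt S^*$ is the mirror image of that of $\wt S$ under the substitution $(i,U,V_i)\mapsto(-i,U^*,V_{-i})$. For part~(1) I would reuse verbatim the computation that produced \eqref{mar5}: that derivation used only the representation \eqref{ljghfci} and the decomposition \eqref{gthtctx1}, never the unitarity of $U$, and it gives $\ti\sN_i=\ker(P_{\sL_{-i}}U-V_iP_{\sL_i})$ together with the first domain formula; the same argument applied to $\wt S^*$ yields $\ti\sN_{-i}=\ker(V_{-i}P_{\sL_{-i}}-P_{\sL_i}U^*)$ and the second.

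For part~(2) the contractivity of $U_0$ follows from
\[
\|P_{\sN'_{-i}}U\f\|^2=\|U\f\|^2-\|P_{\sL_{-i}}U\f\|^2\le\|\f\|^2-\|V_iP_{\sL_i}\f\|^2=\|P_{\sN'_i}\f\|^2,
\]
where the inequality is $\|U\f\|\le\|\f\|$ and the final equality uses the kernel relation $P_{\sL_{-i}}U\f=V_iP_{\sL_i}\f$ and the isometry of $V_i$; the same estimate for $U^*$ makes $U_{*0}$ a contraction, and both maps are well defined because $(V_i-P_{\sL_{-i}}U)\uphar\sL_i$ is injective, which is exactly admissibility ($\ker(I-Y)=\{0\}$). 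The domain identities in \eqref{jul14bb} are \eqref{bef8c} and its mirror for $\wt S^*$, while the action formulas follow by applying $P_{\cH_0}$ to the defining formulas \eqref{ljghfci}, \eqref{ljghfci2} and reducing modulo $\dom S$ via \eqref{gthtctx1}. The dual-pair identity \eqref{juni28ff} reduces, after writing $P_{\sN'_{\pm i}}=I-P_{\sL_{\pm i}}$ and using $(U\f,\psi)=(\f,U^*\psi)$, to the single equality $(V_iP_{\sL_i}\f,\psi)=(\f,V_{-i}P_{\sL_{-i}}\psi)$, which holds because $V_i$ is an isometry of $\sL_i$ onto $\sL_{-i}$ with inverse $V_{-i}$ (see \eqref{forbis}) and both kernel conditions are in force.

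Parts~(3)--(8) then follow quickly. Single-valuedness of $\wt S$ forces $I-U$ to be injective on $\sN_i$ (a nonzero fixed vector of $U$ would give $\wt S(0)=2i\f\ne0$), so $\dom\wt S\cap\cH_0=\dom S$ is equivalent to $\ker(P_{\sL_{-i}}U-V_iP_{\sL_i})=\{0\}$, and the passage to $P_{\sN'_i}\ker(\cdots)$ uses admissibility once more; this is~(3), and its mirror is~(4). Parts~(5) and~(6) record that a dissipative extension of the densely defined $S_0$ is maximal dissipative precisely when its Cayley transform is globally defined, i.e. $\dom U_0=P_{\sN'_i}\ker(\cdots)=\sN'_i$, respectively $\dom U_{*0}=\sN'_{-i}$. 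Part~(7) combines~(5) and~(6) with the equivalence between $(\wt S_0)^*=\wt S_{*0}$ and the maximal dissipativity of both $\wt S_0$ and $-\wt S_{*0}$, recorded in the introduction. Finally, when $\dim\sL<\infty$ the operator $(V_i-P_{\sL_{-i}}U)\uphar\sL_i\colon\sL_i\to\sL_{-i}$ is an injection between equidimensional finite-dimensional spaces (as $\dim\sL_i=\dim\sL_{-i}=\dim\sL$ by \eqref{KRASn}), hence a bijection; solving \eqref{bef19a} for $f_i$ then places every $\f'_i\in\sN'_i$ in $P_{\sN'_i}\ker(\cdots)$, so this projection exhausts $\sN'_i$, and symmetrically $\sN'_{-i}$, whence~(7) gives part~(8).

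I expect the main obstacle to be the bookkeeping in part~(2): checking that $U_0$ and $U_{*0}$ are well-defined contractions and that \eqref{jul14bb} and \eqref{juni28ff} hold demands careful separation of the $\sL_{\pm i}$- and $\sN'_{\pm i}$-components and repeated use of the isometry identities $V_{-i}=V_i^{-1}=V_i^*\uphar\sL_i$; everything else is a faithful transcription of the unitary argument of Proposition~\ref{bef15b} under the symmetry $\wt S\leftrightarrow\wt S^*$.
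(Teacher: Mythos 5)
Your proposal is correct and follows essentially the same route as the paper: the paper's own proof consists of the remark that the argument of Proposition~\ref{bef15b} carries over once unitarity is replaced by contractivity, plus the criterion that a densely defined dissipative operator is maximal dissipative iff its ``Cayley parameter'' $U_0$ (resp.\ $U_{*0}$) is everywhere defined on $\sN_i'$ (resp.\ $\sN_{-i}'$), plus the explicit finite-dimensional surjectivity argument for item~(8) --- all of which you reproduce, including the norm estimate showing $U_0$, $U_{*0}$ are contractions. The only (harmless) deviation is your direct verification of \eqref{juni28ff} via $V_{-i}=V_i^*\uphar\sL_{-i}$, where the paper instead derives it from the dual-pair relation \eqref{jul14a} combined with \eqref{jul14bb}; your version is slightly more self-contained.
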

\begin{proof}
Taking into account that $U$ is a contraction and $\ker(U-V_i)\uphar\sL_i=\{0\}$, the proof is similar to the proof of Proposition \ref{bef15b}.
Besides the statement that \textit{the operator $T$ is maximal dissipative $\Longleftrightarrow$ $T$ is densely defined and dissipative and $-T^*$ is dissipative} has to be used.
The equality \eqref{juni28ff} follows from \eqref{jul14a} and  \eqref{jul14bb}.

Suppose $\codim\cH_0<\infty$. Since the subspaces $\sL_\lambda$ are finite-dimensional, $V_\lambda$ unitarily  maps $\sL_\lambda$ onto $\sL_{\bar\lambda},$
$U\in\bB(\sN_i,\sN_{-i})$ is an admissible contraction, the subspaces $\ker(P_{\sL_{-i}}U- V_iP_{\sL_i})$ and $ \ker(V_{-i} P_{\sL_{-i}}-P_{\sL_{i}}U^{*})$
take the form
\[
\begin{array}{l}
\ker(P_{\sL_{-i}}U- V_iP_{\sL_i})=\left\{\begin{bmatrix}(V_i-P_{\sL_{-i}}U\uphar\sL_i)^{-1}P_{\sL_{-i}}U\f'_i\cr\f_i'\end{bmatrix}:\; \f'_i\in\sN_i'   \right\},\\
\ker(V_{-i} P_{\sL_{-i}}-P_{\sL_{i}}U^{*})=\left\{\begin{bmatrix}(V_{-i}-P_{\sL_{i}}U^*\uphar\sL_{-i})^{-1}P_{\sL_{i}}U^*\f'_{-i}\cr\f_{-i}'\end{bmatrix}:\; \f'_{-i}\in\sN_{-i}'   \right\}.
\end{array}
\]
Hence $P_{\sN'_i}\ker(P_{\sL_{-i}}U- V_iP_{\sL_i})=\sN'_i$ and $P_{\sN'_{-i}}\ker(V_{-i} P_{\sL_{-i}}-P_{\sL_{i}}U^{*})=\sN'_{-i}.$
Consequently, $\wt S_{*0}=(\wt S_0)^*.$
\end{proof}
\section{Parametrization of admissible contractive  $2\times 2$ block operator matrices and compressions of maximal dissipative extensions}
An admissible contraction $U\in\bB(\sN_i,\sN_{-i})$ can be represented by operator-matrix as follows:
\begin{enumerate}
\item if $n_\pm'\ne 0$, then
\begin{equation}\label{mar11bbb}
U=
\begin{bmatrix} YV_i &D_{Y^*}M \cr
GD_{Y}V_i&-GY^*M+D_{G^*}XD_{M}\end{bmatrix}:\begin{array}{l}\sL_{i}\\ \oplus\\
\sN'_i\end{array}\to \begin{array}{l}\sL_{-i}\\ \oplus\\
\sN'_{-i}\end{array}.
\end{equation}
\item if $n_+'=0$ and $n_-'\ne 0,$ then
\begin{equation}\label{11bbbmar}
U=
\begin{bmatrix} YV_i \cr
GD_{Y}V_i\end{bmatrix}:\sN_i=\sL_{i}\to \begin{array}{l}\sL_{-i}\\ \oplus\\
\sN'_{-i}\end{array},
\end{equation}
\item if $n_-=0$ and $n_+\ne 0$, then
\begin{equation}\label{bbbmar12}
U=
\begin{bmatrix} YV_i &D_{Y^*}M \end{bmatrix}:\begin{array}{l}\sL_{i}\\ \oplus\\
\sN'_i\end{array}\to \sL_{-i}=\sN_{-i},
\end{equation}
\end{enumerate}
where\begin{equation}\label{mar11bbbb}
\left\{\begin{array}{l}
Y \;\mbox{is a contraction in}\; \sL_{-i},\;\ker (I-Y)=\{0\},\\
\mbox{the operators}\; M\in\bB(\sN_{i}',\sD_{Y^*}),\;G\in\bB(\sD_{Y},\sN_{-i}')\;
\mbox{are contractions}\\
 X\in\bB(\sD_{M},\sD_{G^*})\;\mbox{is a contraction}.
\end{array}\right.
\end{equation}

\begin{theorem}\label{mar11aaa}
Let $S$ be a non-densely defined closed regular symmetric operator and $n_\pm'\ne 0$. Let $U\in\bB(\sN_i,\sN_{-i})$ be
an admissible contraction of the form \eqref{mar11bbb}. 
  Assume
$\dim (\dom S)^\perp=\infty.$
Then for the maximal dissipative extension of $S$
\[
\left\{\begin{array}{l}
\dom \wt S=\dom S\dot+(I-U)\sN_i,\\
\wt S(f_S+(I-U)\f_i)=Sf_S+i(I+U)\f_i,\;\;f_S\in\dom S,\; \f_i\in\sN_i
\end{array}\right.,
\]
 the following assertions are valid:
\begin{enumerate}
\item $\ker (P_{\sL_{-i}}U- V_iP_{\sL_i})=\left\{\begin{bmatrix} f_i\cr\f'_i \end{bmatrix}:(I-Y)V_if_i=D_{Y^*}M \f_i',\;f_i\in\sL_i,\; \f_i'\in\sN_i'\right\}, $\\
$\ker(P_{\sL_{i}}U^{*}- V_{-i}P_{\sL_{-i}})=\left\{\begin{bmatrix}g_{-i}\cr\f'_{-i} \end{bmatrix}:(I-Y^*)g_{-i}=D_{Y}G^* \f_{-i}',\;g_{-i}\in\sL_{-i},\; \f_{-i}'\in\sN_{-i}'\right\}$;
\item for the operators $U_0$ and $U_{*0}$ defined in \eqref{may5a} the equalities
\begin{equation}\label{mar222}
\begin{array}{l}
\left\{\begin{array}{l}
\dom U_0=\left\{\f_i'\in\sN'_i: D_{Y^*}M\f_i'\in\ran (I-Y)\right\},\\[2mm]
U_0\f_i'=\left(G[-Y^*+D_Y(I-Y)^{-1}D_{Y^*}]M+D_{G^*}XD_M\right)\f_i',\;\f_{i}'\in\dom U_0
\end{array}\right.\end{array},
\end{equation}
\begin{equation}\label{mar222a}
\left\{\begin{array}{l}
\dom U_{*0}=\left\{\f_{-i}'\in\sN'_{-i}: D_{Y}G^*\f_{-i}'\in\ran(I-Y^*)\right\},\\[2mm]
U_{*0}\f_{-i}'=\left(M^*[-Y+D_{Y^*}(I-Y^*)^{-1}D_{Y}]G^*+D_{M}X^*D_{G^*}\right)\f_{-i}',\;
 \f_{-i}'\in\dom U_{*0}
\end{array}\right.
\end{equation}
hold.
\end{enumerate}
Moreover,
\begin{enumerate}
\item [\rm (I)] the following are equivalent:
\begin{enumerate}
\item $\wt S_0=S_0$,
\item $\ker M=\{0\}$ and $\ran (I-Y)\cap D_{Y^*}(\ran M)=\{0\}$,
\end{enumerate}
\item [\rm (II)]the following are equivalent:
\begin{enumerate}
\item $\wt S_{*0}=S_0$,
\item $\ker G^*=\{0\}$ and $\ran (I-Y^*)\cap D_{Y}(\ran G^*)=\{0\}$;
\end{enumerate}
\item [\rm (III)] the compression $\wt S_0$ is maximal dissipative if and only if
$$\ran (I-Y)\supset D_{Y^*}(\ran M);$$
in particular, if $\ran (I-Y)=\ran (I-Y_R)^\half$, then $\wt S_0$ is maximal dissipative;
\item [\rm (IV)]the operator $-\wt S_{*0}$ is maximal dissipative if and only if
\[
\ran (I-Y^*)\supset D_{Y}(\ran G^*);
\]
in particular, if $\ran (I-Y^*)=\ran (I-Y_R)^\half$, then $-\wt S_{*0}$ is maximal dissipative;
\item [\rm (V)]the following are equivalent
\begin{enumerate}
\def\labelenumi{\rm (\roman{enumi})}
\item $\wt S_{*0}=(\wt S_0)^*$;
\item $\left\{\begin{array}{l} \ran (I-Y)\supset D_{Y^*}(\ran M)\\
\ran (I-Y^*)\supset D_{Y}(\ran G^*)\end{array}\right.;$
\end{enumerate}
in particular, if $\ran (I-Y)=\ran (I-Y^*)=\ran (I-Y_R)^\half$, then $\wt S_{*0}=(\wt S_0)^*$;
\item [\rm (VI)]the following are equivalent:
\begin{enumerate}
\item the operator $\wt S$ is regular;
\item the operator $\wt S^*$ is regular;
\item $\ran (I-Y)=\sL_{-i}$.
\end{enumerate}
If one of the conditions (a)--(c) is fulfilled, then $\wt S_0$ and $-\wt S_{*0}$ are maximal dissipative in $\cH_0$.
\end{enumerate}
\end{theorem}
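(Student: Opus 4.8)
The plan is to follow the same architecture as the proof of Theorem~\ref{bef25a}, replacing the unitary $U$ there by the admissible contraction \eqref{mar11bbb} and the isometric/unitary blocks by the contractions of \eqref{mar11bbbb}, and invoking Proposition~\ref{mar11a} (the dissipative analogue of Proposition~\ref{bef15b}) in place of Proposition~\ref{bef15b}. First I would record that \eqref{mar11bbb} is a $2\times2$ operator contraction of the form \eqref{twee} with $A=YV_i$, $D_{A^*}=D_Y$ and $D_A=V_{-i}D_YV_i$; conditions \eqref{mar11bbbb} guarantee that the requirements (a),(b) after \eqref{twee} hold, so $U$ is a contraction, and $\ker(I-Y)=\{0\}$ yields $(U-V_i)f_i\ne0$ for $f_i\in\sL_i\setminus\{0\}$, i.e.\ the admissibility already assumed.

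For part~(1) I would apply Proposition~\ref{mar11a}(1): $\ker(P_{\sL_{-i}}U-V_iP_{\sL_i})=\wt\sN_i$. Writing a generic vector as $f_i\oplus\f_i'\in\sL_i\oplus\sN_i'$ and reading off the first block-row of \eqref{mar11bbb}, the defining equation $P_{\sL_{-i}}U=V_iP_{\sL_i}$ becomes $YV_if_i+D_{Y^*}M\f_i'=V_if_i$, i.e.\ $(I-Y)V_if_i=D_{Y^*}M\f_i'$; computing the matrix of $U^*$ and reading off its first block-row gives the kernel for $U^*$ identically. For part~(2), since $\ker(I-Y)=\{0\}$ the equation $(I-Y)V_if_i=D_{Y^*}M\f_i'$ is solvable in $f_i$ exactly when $D_{Y^*}M\f_i'\in\ran(I-Y)$, and then $V_if_i=(I-Y)^{-1}D_{Y^*}M\f_i'$; substituting into the second block-row of \eqref{mar11bbb} and using \eqref{may5a} gives $U_0\f_i'=G[-Y^*+D_Y(I-Y)^{-1}D_{Y^*}]M\f_i'+D_{G^*}XD_M\f_i'$, which is \eqref{mar222} — the Schur-complement form of Remark~\ref{mar23a}. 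That $U_0$ is a contraction on its domain follows from Lemma~\ref{L1} together with Theorem~\ref{mar14a}(4). Formula \eqref{mar222a} for $U_{*0}$ is the mirror computation under $Y\mapsto Y^*$, $M\mapsto G^*$, $G\mapsto M^*$, $X\mapsto X^*$.

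The equivalences (I)--(V) then reduce to statements about the two domains. By \eqref{jul14bb} one has $\wt S_0=S_0\iff\dom U_0=\{0\}$ and $\wt S_{*0}=S_0\iff\dom U_{*0}=\{0\}$, while Proposition~\ref{mar11a}(5),(6),(7) give: $\wt S_0$ maximal dissipative $\iff\dom U_0=\sN_i'$; $-\wt S_{*0}$ maximal dissipative $\iff\dom U_{*0}=\sN_{-i}'$; and $\wt S_{*0}=(\wt S_0)^*$ $\iff$ both. Using $\dom U_0=\{\f_i':D_{Y^*}M\f_i'\in\ran(I-Y)\}$ and the fact that $D_{Y^*}$ is injective on $\sD_{Y^*}=\cran D_{Y^*}\supseteq\ran M$, the condition $\dom U_0=\{0\}$ splits into $\ker M=\{0\}$ (killing the part on which $D_{Y^*}M\f_i'=0$ automatically) and $\ran(I-Y)\cap D_{Y^*}(\ran M)=\{0\}$, giving (I); (II) is symmetric. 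Likewise $\dom U_0=\sN_i'\iff D_{Y^*}(\ran M)\subseteq\ran(I-Y)$, which is (III), and the ``in particular'' clauses are immediate from Theorem~\ref{mar14a}(5),(6),(7) (e.g.\ $\ran(I-Y)=\ran(I-Y_R)^\half$ forces $\ran D_{Y^*}\subseteq\ran(I-Y)\supseteq D_{Y^*}(\ran M)$); (IV) and (V) follow the same way.

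For (VI) the equivalence (a)$\iff$(b) is immediate from Remark~\ref{mar9bb}(i)$\iff$(ii). For (a)$\iff$(c) I would use Remark~\ref{mar9bb}(iii): regularity of $\wt S$ is equivalent to $(U-V_i)\sL_i$ being a subspace, and from \eqref{mar11bbb} one has $(U-V_i)f_i=(Y-I)V_if_i\oplus GD_YV_if_i$, so closedness means $\|(I-Y)h\|^2+\|GD_Yh\|^2\ge c\|h\|^2$ for $h=V_if_i$ ranging over $\sL_{-i}$. If $\ran(I-Y)=\sL_{-i}$, then $I-Y$ is boundedly invertible, the lower bound is clear, and $\wt S$ (hence $\wt S^*$, via (iv)) is regular; moreover $\ran(I-Y)=\ran(I-Y^*)=\sL_{-i}$ then dominate $D_{Y^*}(\ran M)$ and $D_Y(\ran G^*)$, so (III),(IV) make $\wt S_0$ and $-\wt S_{*0}$ maximal dissipative. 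The main obstacle is the converse, since here $G$ is only a contraction, not the isometry available in Theorem~\ref{bef25a}. The point is that only the \emph{upper} estimate $\|GD_Yh\|\le\|D_Yh\|$ is needed: combined with the identity $\|(I-Y)h\|^2+\|D_Yh\|^2=2\RE((I-Y)h,h)\le2\|(I-Y)h\|\,\|h\|$, the lower bound becomes $c\|h\|^2\le2\|(I-Y)h\|\,\|h\|$, so $\|(I-Y)h\|\ge\half c\|h\|$. Finally, a contraction $Y$ with $\ker(I-Y)=\{0\}$ that is bounded below is automatically surjective: $Y^*u=u$ gives $\|u-Yu\|^2=\|Yu\|^2-\|u\|^2\le0$, so $\ker(I-Y^*)\subseteq\ker(I-Y)=\{0\}$ and $\ran(I-Y)=(\ker(I-Y^*))^\perp=\sL_{-i}$, which is (c).
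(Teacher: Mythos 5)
Your proposal is correct and follows exactly the route the paper intends: the paper omits the proof, stating only that it is ``similar to the proof of Theorem~\ref{bef25a}'', and your argument is a faithful execution of that adaptation, substituting Proposition~\ref{mar11a} for Proposition~\ref{bef15b} and Theorem~\ref{mar14a}/Lemma~\ref{L1} for the contraction estimates. You also correctly handle the one point where the adaptation is not mechanical --- the converse direction of (VI), where $G$ is merely a contraction rather than an isometry --- by observing that only the upper bound $\|GD_Yh\|\le\|D_Yh\|$ is needed before invoking the identity $\|(I-Y)h\|^2+\|D_Yh\|^2=2\RE((I-Y)h,h)$, which is precisely the estimate used in the paper's proof of Theorem~\ref{bef25a}(IV).
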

\begin{theorem}\label{gua31}
Let $S$ be a non-densely defined closed regular symmetric operator, $\dim (\dom S)^\perp=\infty.$
\begin{enumerate}
\item
If $n_+'=0,$ and $n_-'\ne 0 $, then for the maximal dissipative extension of $S$
\[
\left\{\begin{array}{l}
\dom \wt S=\dom S\dot+(I-U)\sN_i,\\
\wt S(f_S+(I-U)\f_i)=Sf_S+i(I+U)\f_i,\;\;f_S\in\dom S,\; \f_i\in\sN_i
\end{array}\right.,
\]
where $U$ is of the form \eqref{11bbbmar}, the following assertions hold true:
\begin{enumerate}
\item $\wt S_0=S_0$;
\item $ S_{*0}= S^*_0\uphar\left(\dom S\dot+\left\{\f_{-i}'\in\sN_{-i}':D_YG^*\f_{-i}'\in\ran(I-Y^*)\right\}\right).$

In particular, if $\ran (I-Y^*)\cap\ran D_{Y}=\{0\}$, then $\wt S_{*0}=S^*_0\uphar\left(\dom S\dot+\ker G^*\right).$
\end{enumerate}
\item If $n_+'\ne 0,$ and $n_-'= 0 $, then for the maximal dissipative extension of $S$
\[
\left\{\begin{array}{l}
\dom \wt S=\dom S\dot+(I-U)\sN_i,\\
\wt S(f_S+(I-U)\f_i)=Sf_S+i(I+U)\f_i,\;\;f_S\in\dom S,\; \f_i\in\sN_i
\end{array}\right.,
\]
where $U$ is of the form \eqref{bbbmar12}, the following assertions hold true:
\begin{enumerate}
\item $\wt S_{*0}=S_0$;
\item $ \wt S_{0}= S^*_0\uphar\left(\dom S\dot+\left\{\f_{i}'\in\sN_{i}':D_{Y^*}M\f_{i}'\in\ran(I-Y)\right\}\right).$

In particular, if $\ran (I-Y^*)\cap\ran D_{Y}=\{0\}$, then $\wt S_{0}=S^*_0\uphar\left(\dom S\dot+\ker M\right).$
\end{enumerate}
\end{enumerate}
\end{theorem}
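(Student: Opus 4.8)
The plan is to read Theorem \ref{gua31} as the degenerate limit of Proposition \ref{mar11a} in which one semi-deficiency subspace collapses. Because Theorem \ref{mar11aaa} is stated under the standing assumption $n'_\pm\ne 0$, it cannot be invoked verbatim, so I would rerun its (short) computations directly from Proposition \ref{mar11a}. I carry out the argument for assertion (1), where $n'_+=0$ and $n'_-\ne0$, so that $\sN'_i=\{0\}$, $\sN_i=\sL_i$ and $P_{\sL_i}=I_{\sN_i}$; assertion (2) follows by the analogous computation with the two semi-deficiency subspaces interchanged (replacing $U$ by $U^*$, $Y$ by $Y^*$ and $M$ by $G^*$), using Theorem \ref{mar14a}(2) to match the intersection condition $\ran(I-Y)\cap\ran D_{Y^*}=\{0\}$ with the stated $\ran(I-Y^*)\cap\ran D_Y=\{0\}$.

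For (a) I would insert $U$ of the form \eqref{11bbbmar} into Proposition \ref{mar11a}. Then $P_{\sL_{-i}}U=YV_i$ and, since $P_{\sL_i}=I_{\sN_i}$, the relation defining $\ker(P_{\sL_{-i}}U-V_iP_{\sL_i})$ becomes $(I-Y)V_if_i=0$; as $\ker(I-Y)=\{0\}$ and $V_i$ is isometric this forces $f_i=0$. Hence the kernel is trivial, and Proposition \ref{mar11a}(3) yields $\dom\wt S\cap\cH_0=\dom S$, i.e.\ $\wt S_0=S_0$.

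For (b) I would first compute $U^*=[\,V_{-i}Y^*\;\;V_{-i}D_YG^*\,]:\sL_{-i}\oplus\sN'_{-i}\to\sL_i$. Since $\ran U^*\subseteq\sL_i=\sN_i$, cancelling the isometry $V_{-i}$ in the equation defining $\ker(V_{-i}P_{\sL_{-i}}-P_{\sL_i}U^*)$ turns it into $(I-Y^*)g_{-i}=D_YG^*\f'_{-i}$; projecting onto $\sN'_{-i}$ according to \eqref{may5a} gives $\dom U_{*0}=\{\f'_{-i}\in\sN'_{-i}:D_YG^*\f'_{-i}\in\ran(I-Y^*)\}$. The decisive simplification is that $U_{*0}=0$, since its values lie in $P_{\sN'_i}\,(\cdots)$ and $\sN'_i=\{0\}$. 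Substituting $U_{*0}=0$ into the action formula \eqref{jul14bb} gives $\wt S_{*0}(g_S+\f'_{-i})=S_0g_S-i\f'_{-i}$ for $g_S\in\dom S$ and $\f'_{-i}\in\dom U_{*0}$; because $\sN'_{-i}=\ker(S_0^*+iI_{\cH_0})$, the right-hand side equals $S_0^*(g_S+\f'_{-i})$, so $\wt S_{*0}=S_0^*\uphar(\dom S\dot+\dom U_{*0})$, which is the asserted description.

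Finally, for the ``in particular'' clause I would note that $D_YG^*\f'_{-i}\in\ran D_Y$ always, so under $\ran(I-Y^*)\cap\ran D_Y=\{0\}$ the membership $D_YG^*\f'_{-i}\in\ran(I-Y^*)$ forces $D_YG^*\f'_{-i}=0$; since $\ran G^*\subseteq\sD_Y=(\ker D_Y)^\perp$ this is equivalent to $G^*\f'_{-i}=0$, whence $\dom U_{*0}=\ker G^*$. I do not anticipate a deep difficulty: once Proposition \ref{mar11a} is in hand the content is essentially bookkeeping. The two points needing care are the handling of the degenerate defect data (the absence of the second block column of \eqref{mar11bbb}, and the vanishing of $U_{*0}$ caused by $P_{\sN'_i}=0$), and the identification of the formal value $-i\f'_{-i}$ with the genuine action of $S_0^*$ in $\cH_0$, which rests on the identities $\sN'_{\pm i}=\ker(S_0^*\mp iI_{\cH_0})$.
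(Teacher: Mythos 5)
Your proof is correct and takes essentially the approach the paper intends: the paper gives no details for Theorem \ref{gua31} beyond the remark that it is proved like Theorem \ref{bef25a}, and your direct specialization of Proposition \ref{mar11a} to the degenerate matrices \eqref{11bbbmar} and \eqref{bbbmar12} --- including the observations that $P_{\sN'_i}=0$ forces $U_{*0}=0$ (so that $\wt S_{*0}$ is a restriction of $S_0^*$) and that $\ran G^*\subseteq\sD_Y$ converts the range-intersection hypothesis into $\dom U_{*0}=\ker G^*$ --- is exactly that computation carried out. Your appeal to Theorem \ref{mar14a}(2) to reconcile the stated condition $\ran(I-Y^*)\cap\ran D_Y=\{0\}$ with the condition $\ran(I-Y)\cap\ran D_{Y^*}=\{0\}$ arising in assertion (2) is also correct and is a detail worth making explicit.
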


The proof of Theorem \ref{mar11aaa} and Theorem \ref{gua31}  are similar to the proof of Theorem \ref{bef25a}.
Note, that another proof of the statement that the compression $\wt S_0$ is maximal dissipative in $\cH_0$ when $\wt S$ is a regular maximal dissipative extension of $S$ is given in \cite[Theorem 4.1.10]{ArlBelTsek2011}.

\begin{theorem}\label{juni27bb} Suppose $\dim(\dom S)^\perp=\infty$ and $n_\pm '\ne 0$.

{\rm(1)}
Let $U\in\bB(\sN_i,\sN_{-i})$ be a contraction of the form \eqref{mar11bbb} whose entries have the properties
\begin{enumerate}
\item [{\rm (a)}] $Y\in\bB(\sL_{-i})$ is a contraction and $\ran (I-Y)\cap\ran D_{Y^*}=\{0\}$,
\item [{\rm (b)}] $M\in\bB(\sN'_i,\sD_{Y^*})$ and $G\in\bB(\sD_{Y},\sN'_{-i})$ are non-zero contractions,
\begin{equation}\label{juni27d}
\ker M\ne\{0\},\; \ker G^*\ne\{0\}.
\end{equation}
\end{enumerate}
Then for any contraction $X\in\bB(\sD_M,\sD_{G^*})$, for the corresponding to $U$ the maximal dissipative extension $\wt S$,
$
\dom\wt S=\dom S\dot+(I-U)\sN_i
$
and for contractions $U_0$ and $U_{*0}$ defined in \eqref{mar222} and \eqref{mar222a} one has
\begin{equation}\label{juni27cc}
\begin{array}{l}
\dom U_0=\ker M,\; U_0=D_{G^*}X\uphar\ker M,\\[2mm]
\dom U_{*0}=\ker G^*,\; U_{*0}=D_{M}X^*\uphar\ker G^*,
\end{array}
\end{equation}
\begin{equation}\label{juni27e}
\begin{array}{l}
U_0h\notin \dom U_{*0}\Longrightarrow ||U_0 h||<||h||,\\[2mm]
U_{*0}f\notin \dom U_{0}\Longrightarrow ||U_{*0} f||<||f||.
\end{array}
\end{equation}
In terms of the corresponding compressions $\wt S_0$ and $\wt S_{*0}$ conditions \eqref{juni27e} are equivalent to the following equalities
\begin{equation}\label{juli24a}
\begin{array}{l}
\dom\wt S_0\cap\dom \wt S_{*0}=\{f\in\dom \wt S_0:\IM(\wt S_0f, f)=0\}\\[2mm] 
=
\{g\in\dom\wt S_{*0}:\IM(\wt S_{*0}g,g)=0\}. 
\end{array}
\end{equation}

{\rm(2)} If
$\ker M\ne\{0\},\; \ker G^*=\{0\},$ ($\ker G^*\ne\{0\},\; \ker M=\{0\}$),
then for any contraction $X\in\bB(\sD_M,\sD_{G^*})$,  for the corresponding to $U$ the maximal dissipative extension $\wt S$
and for contractions defined in \eqref{mar222} and \eqref{mar222a} one has
\[
\begin{array}{l}
\dom U_0=\ker M,\; U_0=D_{G^*}X\uphar\ker M,\;\;\dom U_{*0}=\{0\}\\[2mm]
(\mbox{respectively},\; \dom U_{0}=\{0\},\;\;\dom U_{*0}=\ker G^*,\; U_{*0}=D_M X^*\uphar\ker G^*),
\end{array}
\]
and
\begin{equation}\label{juni28e}
\begin{array}{l}
||U_0 h||<||h||\;\;\forall h\in\dom U_0\setminus\{0\},\\[2mm]
(\mbox{respectively},\; ||U_{*0} g||<||g||\;\;\forall g\in\dom U_{*0}\setminus\{0\}).
\end{array}
\end{equation}
\end{theorem}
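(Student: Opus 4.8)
The plan is to derive every assertion from the explicit descriptions of $U_0$ and $U_{*0}$ supplied by Theorem \ref{mar11aaa}, together with hypothesis (a) in the form $\ran(I-Y)\cap\ran D_{Y^*}=\{0\}$ and its companion $\ran(I-Y^*)\cap\ran D_{Y}=\{0\}$, the two being equivalent by Theorem \ref{mar14a}(2). First I would prove \eqref{juni27cc}. By \eqref{mar222} a vector $\f_i'$ lies in $\dom U_0$ iff $D_{Y^*}M\f_i'\in\ran(I-Y)$; since $D_{Y^*}M\f_i'\in\ran D_{Y^*}$ and $\ran(I-Y)\cap\ran D_{Y^*}=\{0\}$, this forces $D_{Y^*}M\f_i'=0$, and as $M\f_i'\in\sD_{Y^*}=\cran D_{Y^*}$ where $D_{Y^*}$ is injective, we get $M\f_i'=0$, i.e. $\f_i'\in\ker M$; the reverse inclusion $\ker M\subseteq\dom U_0$ is immediate. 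On $\ker M$ one has $M\f_i'=0$, hence $D_M\f_i'=\f_i'$ and the first summand in \eqref{mar222} vanishes, giving $U_0=D_{G^*}X\uphar\ker M$. The corresponding statements for $U_{*0}$ follow verbatim from \eqref{mar222a} using $\ran(I-Y^*)\cap\ran D_{Y}=\{0\}$.

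Next I would establish \eqref{juni27e}. For $h\in\ker M=\dom U_0$ we have $U_0h=D_{G^*}Xh$, so $\|U_0h\|^2=\|Xh\|^2-\|G^*Xh\|^2\le\|Xh\|^2\le\|h\|^2$, while the intertwining $G^*D_{G^*}=D_GG^*$ yields $G^*U_0h=D_GG^*Xh$. If $\|U_0h\|=\|h\|$, equality must hold throughout the chain $\|D_{G^*}Xh\|\le\|Xh\|\le\|h\|$, which forces $G^*Xh=0$ and hence $G^*U_0h=0$, i.e. $U_0h\in\ker G^*=\dom U_{*0}$. The contrapositive is exactly the first implication of \eqref{juni27e}, and the second is obtained by interchanging the roles of $(Y,M,X)$ and $(Y^*,G^*,X^*)$.

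The substantive part, and the step I expect to be the main obstacle, is the equivalence of \eqref{juni27e} with \eqref{juli24a}. I would begin by observing that $\wt S_0=S_0^*\uphar\dom\wt S_0$ and $\wt S_{*0}=S_0^*\uphar\dom\wt S_{*0}$, and then evaluate the boundary form of $S_0^*$ via von Neumann's decomposition $\dom S_0^*=\dom S_0\dot+\sN_i'\dot+\sN_{-i}'$: for $f=f_S+(I-U_0)\f$ with $\f\in\dom U_0$ the components are $\f\in\sN_i'$ and $-U_0\f\in\sN_{-i}'$, and a direct computation (all cross terms being real) gives $\IM(\wt S_0f,f)=\|\f\|^2-\|U_0\f\|^2$; symmetrically $\IM(\wt S_{*0}g,g)=\|U_{*0}\psi\|^2-\|\psi\|^2$ for $g=g_S+(I-U_{*0})\psi$. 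Thus $\cR_0:=\{f\in\dom\wt S_0:\IM(\wt S_0f,f)=0\}$ is the set of $f$ with $\|U_0\f\|=\|\f\|$, and likewise for $\cR_{*0}$. By uniqueness of the von Neumann decomposition, $f\in\dom\wt S_0\cap\dom\wt S_{*0}$ iff $\f\in\dom U_0$, $U_0\f\in\dom U_{*0}$ and $U_{*0}U_0\f=\f$; the dual-pair identity \eqref{juni28ff} (with test vector $U_0\f$) then gives $\|U_0\f\|^2=(\f,U_{*0}U_0\f)=\|\f\|^2$, so the common domain is contained in $\cR_0\cap\cR_{*0}$ unconditionally. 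For the reverse inclusion, if $f\in\cR_0$ then $\|U_0\f\|=\|\f\|$, whence the first implication of \eqref{juni27e} places $U_0\f\in\dom U_{*0}$, and a Cauchy--Schwarz argument, $\|\f\|^2=\|U_0\f\|^2=(\f,U_{*0}U_0\f)\le\|\f\|\,\|U_{*0}U_0\f\|\le\|\f\|^2$ with $\|U_{*0}U_0\f\|\le\|U_0\f\|=\|\f\|$, forces $U_{*0}U_0\f=\f$, so $f$ lies in the common domain. Hence the two implications in \eqref{juni27e} are equivalent to $\cR_0=\dom\wt S_0\cap\dom\wt S_{*0}$ and $\cR_{*0}=\dom\wt S_0\cap\dom\wt S_{*0}$, respectively, which together are \eqref{juli24a}; the converse direction is read off by testing the equality $\cR_0=\dom\wt S_0\cap\dom\wt S_{*0}$ against the elements $f=(I-U_0)h$. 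Since \eqref{juni27e} has just been proved, \eqref{juli24a} holds as well.

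Finally, part (2) is the degenerate instance of this analysis. Hypothesis (a) still yields $\dom U_0=\ker M$ and $\dom U_{*0}=\ker G^*$ together with the formulas in \eqref{juni27cc}, but now exactly one of these kernels is $\{0\}$. If $\ker G^*=\{0\}$, so $\dom U_{*0}=\{0\}$, then for $h\ne 0$ either $U_0h=0$, whence $\|U_0h\|=0<\|h\|$, or $U_0h\ne 0$, in which case $U_0h\notin\dom U_{*0}$ and the first implication of \eqref{juni27e} gives $\|U_0h\|<\|h\|$; this is precisely \eqref{juni28e}. The case $\ker M=\{0\}$ is identical with $U_0$ and $U_{*0}$ interchanged.
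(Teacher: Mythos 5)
Your proposal is correct and follows essentially the same route as the paper's proof: you derive $\dom U_0=\ker M$ and $U_0=D_{G^*}X\uphar\ker M$ from $\ran(I-Y)\cap\ran D_{Y^*}=\{0\}$ together with the injectivity of $D_{Y^*}$ on $\sD_{Y^*}$, obtain \eqref{juni27e} from the defect identity for $D_{G^*}$, and prove the equivalence with \eqref{juli24a} via the boundary-form computation $\IM(\wt S_0f,f)=\|h\|^2-\|U_0h\|^2$, the duality relation \eqref{juni28ff}, and the Cauchy--Schwarz equality argument forcing $U_{*0}U_0h=h$. The only cosmetic deviation is that you establish the unconditional inclusion of $\dom\wt S_0\cap\dom\wt S_{*0}$ in the two zero sets through the uniqueness of the von Neumann decomposition rather than through the dissipativity of $\wt S_0$ and $-\wt S_{*0}$; both arguments are valid.
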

\begin{proof}

(1) Due to \eqref{juni27d} we get
\[
\begin{array}{l}
D_{M}\uphar\ker M=I_{\ker M},\; ||D_M h||<||h||\;\forall h\in\cran M^*\setminus\{0\},\\[2mm]
D_{G^*}\uphar\ker G^*=I_{\ker G^*}, \; ||D_{G^*} f||<||f||\;\forall f\in\cran G\setminus\{0\}.
\end{array}
\]
Condition (a) on $Y$, the equivalence (2) in Theorem \ref{mar14a}, and  \eqref{mar222}, \eqref{mar222a} yield \eqref{juni27cc}.

Since ${\ker G^*}=\dom U_{*0}$ and for $h\in\dom U_0$ we have
$$U_0h=P_{\ker G^*}Xh+D_{G^*}P_{\cran G}Xh,$$
the condition $U_0h\notin \dom U_{*0}$ implies $P_{\cran G}Xh\ne 0$. Hence
\[
\begin{array}{l}
||U_0 h||^2=||P_{\ker G^*}Xh||^2+||D_{G^*}P_{\cran G}Xh||^2\\[2mm]
< ||P_{\ker G^*}Xh||^2+||P_{\cran G}Xh||^2=||Xh||^2\le ||h||^2.
\end{array}
\]
Thus, we get the first implication in \eqref{juni27e}, the second can be proved similarly.

The relation \eqref{juni28ff} follows from \eqref{juni27cc}.

Let us prove the equivalence of \eqref{juni27e} and \eqref{juli24a}. If $f\in\dom \wt S_0\cap\dom\wt S_{*0}$, then because $\wt S_0 f=S_0^*f$, $\wt S_{*0}f=S_0^*f$ and $\wt S_0$ and
$-\wt S_{*0}$ are dissipative operators, we have
$$0\le\IM(\wt S_0 f,f)=\IM(S^*_0f,f),\; 0\ge \IM(\wt S_{*0}f,f)=\IM(S^*_0f,f).$$
Consequently, $\IM(\wt S_0 f,f)=\IM(\wt S_{*0} f,f)=0.$

Note that from $\wt S_0 (f_S+(I-U_0)h)= S_0f_S+i(I+U_0)h$, $f_S\in\dom S,\; h\in\dom U_0$ it follows the equality
\begin{equation}\label{31ajul}
\IM \left(S_0f_S+i(I+U_0)h,f_S+(I-U_0)h\right)=||h||^2-||U_0h||^2.
\end{equation}
 Suppose \eqref{juni27e} and assume for $f=(I-U_0)h(\in\dom \wt S_0)$ that the equality $\IM(\wt S_0 f,f)=0$ holds. Then from \eqref{31ajul}
we get $||h||^2=||U_0h||^2$. The equality \eqref{juni27e} gives $U_0h\in\dom U_{*0}$.
Now from \eqref{juni28ff} with $\f_i=h,$ $\psi_{-i}=U_0h$:
 \[
(U_0h, U_{0}h)=(h, U_{*0}U_0h).
\]
Hence by the Cauchy-Schwartz inequality
\[
||h||^2=||U_0 h||^2=(h, U_{*0}U_0h)=|(h, U_{*0}U_0h)|\le ||h||||U_{*0}U_0 h|| \Longrightarrow ||h||\le ||U_{*0}U_0h||.
\]
Because the operator $U_{*0}U_0$ is a contraction, we obtain $||U_{*0}U_0h||=||h||$ and $(h, U_{*0}U_0h)=||h||||U_{*0}U_0h||.$
It follows that $U_{*0}U_0h=h.$
The latter gives that
\[
f=(I-U_0)h=((I-U_{*0})(-U_0h)\in\dom \wt S_{*0}.
\]
Thus, $f\in\dom\wt S_0\cap\wt S_{*0}.$

Now suppose the implication $\IM(\wt S_0f, f)=0 \Longrightarrow f\in \dom\wt S_0\cap\dom \wt S_{*0}$ holds.
Assume $||\f_i||=||U_0\f_i||$ for some $\f_i\in\dom U_0$. Then for $f=(I-U_0)\f_i\in\dom \wt S_0$  holds the equality $\IM(\wt S_0f, f)=0$, which implies
$f\in\dom \wt S_{*0}$, i.e.,
there is $\psi_{-i}\in\dom U_{*0}$ such that $f=(U_{*0}-I)\psi_{-i}$. Hence
\[
(I-U_0)\f_i=(U_{*0}-I)\psi_{-i}\Longleftrightarrow U_0\f_i=\psi_{-i} \quad\mbox{and}\quad \f_i=U_{*0}\psi_{-i}.
\]
Thus, $U_0\f_i\in\dom U_{*0}$. Therefore, if $U_0h\notin\dom U_{*0}$, then necessary $||U_0h||<||h||.$

Similarly the implication $\IM(\wt S_{*0}g, )=0 \Longrightarrow g\in \dom\wt S_0\cap\dom \wt S_{*0}$ implies that
if $U_{*0}h\notin\dom U_{0}$, then necessary $||U_{*0}h||<||h||.$

Thus, \eqref{juli24a} $\Longrightarrow$ \eqref{juni27e}.

(2) Consider the case $\ker M\ne\{0\},\; \ker G^*=\{0\}$. The case $\ker G^*\ne\{0\},\; \ker M=\{0\}$ one can be consider similarly.

Since $\ker G^*\ne\{0\}$ from \eqref{mar222a} it follows $\dom U_{*0}=\{0\}$ and from \eqref{juni27cc}
\[
\dom U_0=\ker M,\;\; U_0=D_{G^*}X\uphar\ker M.
\]
Since $\ker G^*=\{0\}$ implies $||D_{G^*}g||<||g||$ for all $g\in\sD_{G^*}\setminus\{0\}$ and $||Xh||\le ||h||$, we get \eqref{juni28e}.

\end{proof}

The theorem below is the inverse to Theorem \ref{mar11aaa} and Theorem \ref{juni27bb}.
\begin{theorem}\label{juni18a}
Let $S$ be a non-densely defined regular symmetric operator with non-zero semi-deficiency indices. Assume $\dim(\dom S)^\perp=\infty$.

Suppose that the dual pair $\left<\wt S_0,\wt S_{*0}\right>$ consists of closed extensions of $S_0$ in $\cH_0$ such that
\begin{itemize}
\item $\wt S_0$ and $-\wt S_{*0}$ are dissipative,
\item the equalities in \eqref{juli24a} hold.
\end{itemize}
Then there exist maximal dissipative extensions $\wt S$ of $S$ in $\cH$ such that
 $$P_{\cH_0}\wt S\uphar\dom \wt S\cap\cH_0= \wt S_0,\;
P_{\cH_0}\wt S^*\uphar\dom \wt S^*\cap\cH_0=\wt S_{*0}.$$
\end{theorem}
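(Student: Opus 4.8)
The plan is to invert the passage $U\mapsto(\wt S_0,\wt S_{*0})$ of Proposition~\ref{mar11a} and Theorem~\ref{mar11aaa}: I will produce an admissible contraction $U\in\bB(\sN_i,\sN_{-i})$ of the block form \eqref{mar11bbb} whose associated compression operators are the given $\wt S_0,\wt S_{*0}$, and then let $\wt S$ be the maximal dissipative extension determined by $U$. First I encode the data by boundary contractions. Since $S_0$ is densely defined, closed and symmetric in $\cH_0$ with deficiency subspaces $\sN'_i,\sN'_{-i}$, and $\wt S_0$, $-\wt S_{*0}$ are closed dissipative extensions of $S_0$, their Cayley transforms furnish contractions $U_0:\dom U_0\subseteq\sN'_i\to\sN'_{-i}$ and $U_{*0}:\dom U_{*0}\subseteq\sN'_{-i}\to\sN'_{i}$ through the representations \eqref{jul14bb}. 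The dual-pair identity \eqref{jul14a} gives \eqref{juni28ff}, and here the hypothesis enters: by the computation \eqref{31ajul} one has $\IM(\wt S_0f,f)=0$ exactly on the isometric subspace $\ker D_{U_0}$ of $U_0$, so the equalities \eqref{juli24a} are equivalent to the contraction conditions \eqref{juni27e}. This equivalence, proved inside Theorem~\ref{juni27bb}(1), uses only the abstract relations \eqref{jul14bb}, \eqref{juni28ff} and therefore applies to the extracted pair $\left<U_0,U_{*0}\right>$.

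Next I fix the ambient parameters. Because $\dim(\dom S)^\perp=\dim\sL_{\pm i}=\infty$, Proposition~\ref{apr7a} provides a contraction $Y$ in $\sL_{-i}$ with $\ker(I-Y)=\{0\}$ and $\Omega_Y=\ran(I-Y)\cap\ran D_{Y^*}=\{0\}$, whose defect subspaces $\sD_Y,\sD_{Y^*}$ may be prescribed of arbitrary, in particular sufficiently large, dimension. In this regime the compression formulas \eqref{mar222}, \eqref{mar222a} collapse to the simple form \eqref{juni27cc}, so the task becomes: choose $M\in\bB(\sN'_i,\sD_{Y^*})$ with $\ker M=\dom U_0$ and $G\in\bB(\sD_Y,\sN'_{-i})$ with $\ker G^*=\dom U_{*0}$ (whence $D_M\uphar\dom U_0=I$ and $D_{G^*}\uphar\dom U_{*0}=I$), together with a contraction $X\in\bB(\sD_M,\sD_{G^*})$, such that
\[
U_0=D_{G^*}X\uphar\dom U_0,\qquad U_{*0}=D_{M}X^*\uphar\dom U_{*0}.
\]

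The core of the argument is the simultaneous construction of $X$. Projecting the first identity onto $\ker G^*=\dom U_{*0}$ and the second onto $\ker M=\dom U_0$ yields the same requirement $P_{\dom U_{*0}}Xh=P_{\dom U_{*0}}U_0h$ for $h\in\dom U_0$, which is forced by \eqref{juni28ff}; thus the two prescriptions are automatically compatible on the kernels. On the isometric part $\ker D_{U_0}\subseteq\dom U_0$, condition \eqref{juni27e} together with the argument in the proof of Theorem~\ref{juni27bb}(1) shows $U_0h\in\dom U_{*0}$ and $U_{*0}U_0h=h$, so I may set $Xh:=U_0h\in\ker G^*$ and then $D_{G^*}Xh=U_0h$; the mirror statement handles $U_{*0}$. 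On the strictly contractive directions the loss of norm of $U_0$ (respectively $U_{*0}$) is absorbed by choosing the positive contraction $D_{G^*}\uphar\cran G$ (respectively $D_{M}\uphar\cran M^*$) so as to factor $U_0=D_{G^*}X$ (respectively $U_{*0}=D_MX^*$) with $X$ isometric there; the room to realize these positive factors and the coupling $X$ as a genuine contraction is exactly what $\dim\sL_{\pm i}=\infty$ guarantees. I expect the main obstacle to lie precisely here, namely in verifying that a single contraction $X$ meets the two off-kernel specifications for $U_0$ and $U_{*0}$ at once, i.e.\ that the factorizations on the $\cran G$- and $\cran M^*$-components are adjoint-compatible; the conditions \eqref{juni27e} are what make this possible.

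Finally, with $Y,M,G,X$ in hand, \eqref{mar11bbb} defines a contraction $U$; since $\ker(I-Y)=\{0\}$ it is admissible, so it determines a maximal dissipative extension $\wt S$ of $S$ with $\dom\wt S=\dom S\dot+(I-U)\sN_i$. By construction the contractions attached to $\wt S$ and $\wt S^*$ via \eqref{may5a}, \eqref{mar222}, \eqref{mar222a} are $U_0$ and $U_{*0}$, so Proposition~\ref{mar11a} and Theorem~\ref{mar11aaa} give $P_{\cH_0}\wt S\uphar(\dom\wt S\cap\cH_0)=\wt S_0$ and $P_{\cH_0}\wt S^*\uphar(\dom\wt S^*\cap\cH_0)=\wt S_{*0}$, as required. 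The degenerate cases in which $\dom U_0$ or $\dom U_{*0}$ is trivial or exhausts $\sN'_{\pm i}$ (so that $M$ or $G$ vanishes) are handled by letting the corresponding entry degenerate within \eqref{mar11bbb}, exactly as in the analogous cases of the proofs of Theorem~\ref{bef29ab} and Theorem~\ref{gua31}.
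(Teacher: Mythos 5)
Your overall architecture is the paper's own: extract the contractions $U_0$, $U_{*0}$ from the given dual pair via \eqref{jul14bb}, choose $Y\in\bB(\sL_{-i})$ with $\ker(I-Y)=\{0\}$ and $\ran(I-Y)\cap\ran D_{Y^*}=\{0\}$ and sufficiently large defect subspaces, choose $M$ and $G$ with $\ker M=\dom U_0$ and $\ker G^*=\dom U_{*0}$, realize $U_0=D_{G^*}X\uphar\ker M$ and $U_{*0}=D_MX^*\uphar\ker G^*$ for a \emph{single} contraction $X$, and assemble \eqref{mar11bbb}. The reductions you carry out up to that point (equivalence of \eqref{juli24a} with \eqref{juni27e} via \eqref{31ajul}, compatibility of the two prescriptions on the kernel components via \eqref{juni28ff}, the behaviour on the isometric part of $U_0$) are correct and coincide with the paper's. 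But the step you yourself label ``the main obstacle'' --- that one contraction $X$ can meet both off-kernel specifications at once --- is precisely the substance of the theorem and the only place where the hypothesis \eqref{juli24a}/\eqref{juni27e} is quantitatively consumed; you do not prove it, so the argument does not close.

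Moreover, the recipe you sketch for closing it --- absorb the entire norm loss of $U_0$ into $D_{G^*}\uphar\cran G$ and take $X$ isometric on the strictly contractive directions --- cannot work as stated. Write $P_{*0}$ for the projection onto $\dom U_{*0}$ and $\Phi_0:=P_{*0}^\perp U_0$. Since $D_{G^*}$ acts as the identity on $\ker G^*=\dom U_{*0}$, the component of $Xh$ in $\dom U_{*0}$ is forced to be $P_{*0}U_0h$, undamped; if in addition the complementary component of $Xh$ is chosen isometrically on the orthogonal complement of $\ker\Phi_0$ in $\dom U_0$, then $\|Xh\|^2=\|P_{*0}U_0h\|^2+\|P_{(\ker\Phi_0)^\perp}h\|^2$, which exceeds $\|h\|^2$ whenever $h\perp\ker\Phi_0$ and $P_{*0}U_0h\ne 0$. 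So the division of the norm budget between the positive factor $D_{G^*}$ and the coupling $X$ must be calibrated, and --- the point your proposal is missing --- the adjoint-compatibility of the two calibrations is obtained in the paper not by inspection but by explicitly defining partial operators $X_0$ on $\ker M$ and $X_{*0}$ on $\ker G^*$ through the polar decompositions of $\Phi_0$ and $\Phi_{*0}$ (Steps 1--3 of the paper's proof), verifying that $\left<X_0,X_{*0}\right>$ is a dual pair, and then invoking the theorem on contractive extensions of dual pairs of contractions (\cite[Chapter IV, Proposition 4.2]{SF}, \cite{AG1982}). Nothing in your proposal plays the role of that extension theorem, and without some such device the existence of $X$ --- hence of $\wt S$ --- remains unproved.
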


\begin{proof}
The domains of $\wt S_0$ and $\wt S_{*0}$ have the representations
\[
\dom \wt S_0=\dom S\dot +(I-U_0)\dom U_0,\; \dom \wt S_{*0}=\dom S\dot +(I-U_{*0})\dom U_{*0},
\]
 where $U_0$ and $U_{*0}$ are closed contractions
 \[\begin{array}{l}
\sN_{i}'\supseteq\dom U_0\stackrel{U_0}\longrightarrow\sN_{-i}',\; 
\sN_{-i}'\supseteq\dom U_{*0}\stackrel{U_{*0}}\longrightarrow\sN_{i}',\\
\end{array}
\]
satisfying conditions \eqref{juni28ff}. By Theorem \ref{juni27bb} the condition \eqref{juli24a} is equivalent to \eqref{juni27e}.

We will construct a contraction $\wt U\in\bB(\sN_i,\sN_{-i})$ of the form \eqref{mar11bbb}--\eqref{mar11bbbb}
such that (see Theorem \ref{mar11aaa} (2))
\[
\begin{array}{l}
\ker (I-Y)=\{0\},\;\ran (I-Y)\cap\ran D_{Y^*}=\{0\},\\
\dom U_0=\ker M,\; \dom U_{*0}=\ker G^*,\\
U_0=D_{G^*}X\uphar\ker M,\; U_{*0}=D_M X^*\uphar\ker G.
\end{array}
\]

Our construction is divided into several steps.

{\textbf{Step 1}}

(1) \textit{Constructions of auxiliary operators $V_0$, and $\cA_0$.}

Let $P_{*0}$ and $P^\perp_{*0}$ be the orthogonal projections in $\sN'_{-i}$ onto the subspaces $\dom U_{*0}$ and $\sN'_{-i}\ominus\dom U_{*0}$, respectively.
Then from condition \eqref{juni27e} it follows that
\[
||P^\perp_{*0} U_0\f_i||^2<||\f_{i}||^2-||P_{*0}U_0\f_i||^2
\;\;\forall \f_i\in\dom U_0: U_0\f_i\notin\dom U_{*0}.
\]
Set
$$\Phi_0:=P^\perp_{*0} U_0:\dom U_0\to\sN_{-i}'\ominus\dom U_{*0}$$
and let
\[
\Phi_0=|\Phi_0^*|V_0 
\]
be the polar decomposition of $\Phi_0$, where
\begin{itemize}
\item $\Phi_0^*:\sN_{-i}'\ominus\dom U_{*0}\to\dom U_{0}$ is the adjoint to $\Phi_0$,
\item $|\Phi_0^*|=(\Phi_0\Phi^*_0)^\half\in\bB(\sN_{-i}'\ominus\dom U_{*0})$ is the absolute value of $\Phi_0$, $\ran |\Phi_0^*|=\ran \Phi_0,$
\item $V_0:\dom U_{0}\to \sN_{-i}'\ominus\dom U_{*0}$
is a partial isometry
$$\ran V_0=\cran \Phi_0,\;
\;\ker V_0=\ker\Phi_0=\{h\in\dom U_0: U_0h\in\dom U_{*0}\}.$$
\end{itemize}
So, we have a representation
\begin{equation}\label{juni20a}
U_0=P_{*0}U_0+P_{*0}^\perp U_0=P_{*0}U_0+|\Phi^*_0| V_0.
\end{equation}
Let us show that
$$|||\Phi_0^*| h||<||h||\;\; \forall h\in\cran |\Phi_0^*|\setminus\{0\}=\cran \Phi_0\setminus\{0\}.$$

Assume $|||\Phi_0^*| h||^2=||h||^2$ for some $h\in\cran \Phi_0$. Then $h=V_0\f_i$ for some $\f_i\in\dom U_0,$
\[
\begin{array}{l}
\f_i=f_1+f_2,\; f_1\in\ker V_0,\;f_2\in\ran V^*_0=\dom U_0\ominus\ker V_0,\\[2mm]
h=V_0\f_i=V_0f_2,\\[2mm]
|||\Phi_0^*| h||^2=|||\Phi_0^*|V_0 \f_i||^2=|||\Phi_0^*|V_0 f_2||^2=||P^\perp_{*0} U_0 f_2||^2,\\[2mm]
||h||^2=||V_0\f_i||^2=||V_0f_2||^2=||f_2||^2.
\end{array}
\]
Hence
\[
||f_2||^2=||P^\perp_{*0} U_0 f_2||^2=||U_0f_2||^2- ||P_{*0} U_0 f_2||^2\le ||f_2||^2-||P_{*0} U_0 f_2||^2.
\]
Therefore $P_{*0} U_0 f_2=0$, i.e., $U_0f_2\notin \dom U_{*0}.$ Consequently by \eqref{juni27e}
\[
||f_2||^2>||U_0 f_2||^2=||P_{*0} U_0 f_2||^2+||P^\perp_{*0} U_0 f_2||^2.
\]
This inequality contradicts to the equality $||f_2||^2=||P^\perp_{*0} U_0 f_2||^2.$
It follows that $h=0.$

Define a nonnegative selfadjoint contraction $\cA_0\in\bB(\sN_{-i}')$ by means of the block operator matrix w.r.t the decomposition
$\sN_{-i}'=\dom U_{*0}\oplus(\sN_{-i}'\ominus\dom U_{*0})$:
\begin{equation}\label{juni24a}
\cA_0:=\begin{bmatrix}0&0\cr 0&I-|\Phi_0^*|^2  \end{bmatrix}:\begin{array}{l}\dom U_{*0}\\\oplus\\\sN_{-i}'\ominus\dom U_{*0}\end{array}\to \begin{array}{l}\dom U_{*0}\\\oplus\\\sN_{-i}'\ominus\dom U_{*0}\end{array}.
\end{equation}
Then $\ker \cA_0=\dom U_{*0}$. 

(2) \textit{Constructions of auxiliary operators $V_{*0}$, and $\cA_{*0}$.}

Let $P_{0}$ and $P^\perp_{0}$ be the orthogonal projections in $\sN'_{i}$ onto $\dom U_{0}$ and $\sN'_{i}\ominus\dom U_{0}$, respectively.

Set $\Phi_{*0}:=P^\perp_{0} U_{*0}:\dom U_{*0}\to\sN_{i}'\ominus\dom U_{0}$ and let
\[
\Phi_{*0}=|\Phi_{*0}^*|V_{*0} 
\]
be the polar decomposition of $\Phi_{*0}$, where
\begin{itemize}
\item
$\Phi_{*0}^*:\sN_{i}'\ominus\dom U_{0}\to\dom U_{*0}$ is the adjoint to $\Phi_{*0}$,
\item $|\Phi_{*0}^*|=(\Phi_{*0}\Phi^*_{*0})^\half\in\bB(\sN_{i}'\ominus\dom U_{0})$ is the absolute value of $\Phi_{*0}$, $\ran |\Phi_{*0}^*|=\ran \Phi_{*0},$
\item $V_{*0}:\dom U_{*0}\to \sN_{i}'\ominus\dom U_{0}$
is a partial isometry
$$\ker V_{*0}=\ker\Phi_{*0}=\{h\in\dom U_{*0}: U_{*0}h\in\dom U_{0}\}.$$
\end{itemize}
As above one can proof that
$$|||\Phi_{*0}^*\f_{-i}||<||\f_{-i}||\;\;
 \f_{-i}\in\cran |\Phi_{*0}^*|\setminus\{0\}=\cran \Phi_{*0}\setminus\{0\}.$$
We obtain the following representation of $U_{*0}$:
\begin{equation}\label{juni20b}
U_{*0}=P_{0}U_{*0}+P_{0}^\perp U_{*0}=P_{0}U_{*0}+|\Phi^*_{*0}| V_{*0}.
\end{equation}
Set
\begin{equation}\label{juni24b}
\cA_{*0}:=\begin{bmatrix}0&0\cr 0&I-|\Phi_{*0}^*|^2  \end{bmatrix}:\begin{array}{l}\dom U_{0}\\\oplus\\\sN_{i}'\ominus\dom U_{0}\end{array}\to \begin{array}{l}\dom U_{0}\\\oplus\\\sN_{i}'\ominus\dom U_{0}\end{array}.
\end{equation}

Then $\cA_{*0}$ is nonnegative selfadjoint contraction in $\sN_i'$ and  $\ker \cA_{*0}=\dom U_{*0}$. 

{\textbf{Step 2}}

(1) \textit{The choice of the operator $Y$}

By Remark \ref{juni22a} it is possible to find
a contraction $Y\in\bB(\sL_{-i})$ having the following properties:
\begin{equation}\label{aug8a}
\begin{array}{l}
({\bf{a}})\;\ker (I-Y)=\{0\},\;\ran (I-Y)\cap\ran D_{Y^*}=\{0\},\\
({{\bf b}})\;\dim \sD_{Y}\ge \dim(\sN_{-i}'\ominus\dom U_{*0}),\\
({{\bf c}})\;\dim \sD_{Y^*}\ge \dim(\sN_{i}'\ominus\dom U_{0}).
\end{array}
\end{equation}

(2) \textit{The operator $M$}

Let $\cA_{*0}$ be given by \eqref{juni24b}, let $Z_{*0}\in\bB(\cran A_{*0},\sD_{Y^*})$ be an isometry. Define a contraction
$$M:=Z_{*0}\cA_{*0}^\half \in\bB(\sN'_i,\sD_{Y^*}).$$
Then $\ker M=\dom U_0$, $\cran M^*=\sN_i'\ominus\dom U_0,$ and
\begin{equation}\label{juni20cc}
\begin{array}{l}
D^2_M=I-\cA_{*0}=\begin{bmatrix}I&0\cr 0&|\Phi_{*0}^*|^2  \end{bmatrix}:\begin{array}{l}\dom U_{0}\\\oplus\\\sN_{i}'\ominus\dom U_{0}\end{array}\to \begin{array}{l}\dom U_{0}\\\oplus\\\sN_{i}'\ominus\dom U_{0}\end{array},\\[3mm]
\sD_M^2=\dom U_0\oplus\cran \Phi_{*0}=\ker M\oplus \cran \Phi_{*0}.
\end{array}
\end{equation}
(3) \textit{The operator $G$}

Let $\cA_0$ be given by \eqref{juni24a}, let $Z_{0}\in \bB(\cran\cA_{0},\sD_{Y})$ be an isometry and
let the contraction $G\in\bB(\sD_Y,\sN_i')$ be defined via its adjoint $G^*\in\bB(\sN_i',\sD_{Y})$ as follows
$$G^*:=Z_{0}\cA_{0}^\half.$$
Then $G=\cA_{0}^\half Z_0^*$, $\ker G^*=\dom U_{*0}$, $\cran G=\sN_{-i}'\ominus\dom U_{*0}$ and
\begin{equation}\label{juni20dd}
\begin{array}{l}
D^2_{G^*}=I-\cA_{0}=\begin{bmatrix}I&0\cr 0&|\Phi_{0}^*|^2  \end{bmatrix}:\begin{array}{l}\dom U_{*0}\\\oplus\\\sN_{-i}'\ominus\dom U_{*0}\end{array}\to \begin{array}{l}\dom U_{*0}\\\sN_{-i}'\ominus\dom U_{*0}\end{array},\\[3mm]
\sD_{G^*}=\dom U_{*0}\oplus \cran\Phi_{0}=\ker G^*\oplus \cran\Phi_{0}.
\end{array}
\end{equation}

{\textbf{Step 3}}
\textit{Contractive operator $X\in\bB(\sD_M,\sD_{G^*})$}

Define contractions
\begin{equation}\label{jul20a}
\begin{array}{l}
X_{0}:\ker M=\dom U_0\to\sD_{G^*},\; X_0:=P_{*0}U_0+V_0,\\[2mm]
X_{*0}:\ker G^*=\dom U_{*0}\to\sD_{M},\; X_{*0}:=P_{0}U_{*0}+V_{*0}.
\end{array}
\end{equation}
Then \eqref{juni20a}, \eqref{juni20b}, \eqref{juni20cc}, \eqref{juni20dd}, \eqref{jul20a} imply the equalities:
\begin{equation}\label{juni20ee}
\begin{array}{l}
U_0=D_{G^*}X_0\uphar\ker M=(I-\cA_{0})^\half X_0\uphar \dom U_0=P_{*0}U_0+|\Phi_{0}^*|V_0,\\
 U_{*0}=D_M X_{*0}\uphar\ker G^*=(I-\cA_{*0})^\half X_{*0}\uphar \dom U_{*0}=P_{0}U_{*0}+|\Phi_{*0}^*|V_{*0}.
\end{array}
\end{equation}
Moreover, if $\f_i'\in\dom X_0$ and $\f_{-i}'\in\dom X_{*0}$, then since $D_{G^*}^{[-1]}\f_{-i}'=\f_{-i}'$, $D_M\f_i'=\f_i'$, we have
\[
\begin{array}{l}
(X_0 \f_i',\f_{-i}')=(D_{G^*}^{[-1]}U_0\f_i', \f_{-i}')=(U_0\f_i', D_{G^*}^{[-1]}\f_{-i}')=(U_0\f_i,\f_{-i}')=\\
=(\f_{i}', U_{*0}\f_{-i}')=(\f_i',D_M X_{*0}\f_{-i}')=(D_M\f_i', X_{*0}\f_{-i}')=(\f_i', X_{*0}\f_{-i}').
\end{array}
\]
Thus, if $\sH_1:=\sD_M$ and $\sH_2=:\sD_{G^*}$, then the pair $\left<X_0, X_{*0}\right>$ forms a dual pair of contractions acting from $\ker M=\dom X_0\subset\sH_1$ into $\sH_2$ and from $\ker G^*=\dom X_{*0}\subset\sH_2$ into $\sH_1$, respectively. According to \cite[Chapter IV, Proposition 4.2]{SF},
 \cite{AG1982} there exists contraction $X\in\bB(\sH_1,\sH_2)$ such that
\[
X\supset X_0,\;\; X^*\supset X_{*0}.
\]
This means that $X\uphar\ker M=X_0$, $X^*\uphar\ker G^*=X_{*0}.$

{\textbf{Step 4}}
\textit{Maximal dissipative extension $\wt S$}

Set
\[
\wt U:=
\begin{bmatrix} YV_i &D_{Y^*}M \cr
GD_{Y}V_i&-GY^*M+D_{G^*}XD_{M}\end{bmatrix}:\begin{array}{l}\sL_{i}\\ \oplus\\
\sN'_i\end{array}\to \begin{array}{l}\sL_{-i}\\ \oplus\\
\sN'_{-i}\end{array},
\]
where the operators $G$, $M$, $X$, and $Y$ have been described previously.
Define the maximal dissipative extension $\wt S$ of $S$ with
\[
\dom \wt S=\dom S\dot+(I-\wt U)\sN_i.
\]
Since $\ran (I-Y)\cap\ran D_{Y^*}=\ran (I-Y^*)\cap\ran D_{Y}=\{0\}$, from \eqref{may5a} by Theorem \ref{mar11aaa} (2) and from \eqref{juni20ee} we get
\[
\begin{array}{l}
\dom \wt U_0=\ker M=\dom U_0,\;\; \wt U_{0}=D_{G^*}X\uphar\ker M=U_0 \\
\dom \wt U_{*0}=\ker G^*=\dom U_{*0},\;\;\wt U_{*0}=D_{M}X^*\uphar\ker G=U_{*0}.
\end{array}
\]
According to Theorem \ref{mar11aaa}
the compressions of $\wt S$ and $\wt S^*$ on $\cH_0$ coincide with given extensions $\wt S_0$ and $\wt S_{*0}$ of $S_0$ in $\cH_0$.
\end{proof}
\begin{remark}\label{juni30a}
{\rm (1)} If $\wt S_{*0}=S_0$ (i.e., $\dom U_{*0}=\{0\}$), then the first condition in \eqref{juni27e} takes the form
\[
U_0h\ne 0\Longrightarrow ||U_0h||<||h|| 
\]
and implies that
\[
||U_0 h||<||h||\;\;\forall h\in\dom U_0\setminus\{0\}.
\]
Hence
$\Phi_0=U_0$, $U_0=|U_0^*|V_0 $
is the polar decomposition of $U_0$,
$$\begin{array}{l}
V_0:\dom U_0\to\sN_{-i}',\;
\ran V_0=\cran |U^*_0|=\cran U_0,\\
0\le |U_0^*|h<h$ for all $h\in\cran U_0^*,\\
\cA_0=I_{\sN_{-i}'}-|U_0^*|^2:\cran U_0\to\cran U_0,\; \ker \cA_0=\{0\},\\
 G^*:=Z_{0}\cA_{0}^\half =Z_0 (I_{\sN_{-i}'}-|U_0^*|^2)^\half,\; \ker G^*=\{0\},\;D_{G^*}=|U_0^*|\uphar\cran U_0,
\end{array} $$
where $Z_0$ is an isometry.
\[\begin{array}{l}
X_{0}:\ker M=\dom U_0\to\sD_{G^*}=\cran U_0,\; X_0:=V_0,\\[2mm]
X_{*0}=0:\ker G^*=\{0\}\to\sD_{M}. 
\end{array}
\]
Then $X$ is any contractive extension (see \cite{Crandall_1969}) of $X_0$ on the space $\sN_i'$.

In particular, if $\wt S_0=\wt S_{*0}=S_0$ ($\dom U_0=\{0\},\;\dom U_{*0}=\{0\}$), then $\wt S$ is determined by
\[
\wt U:=
\begin{bmatrix} YV_i &D_{Y^*}M \cr
GD_{Y}V_i&-GY^*M+D_{G^*}XD_{M}\end{bmatrix},
\]
where in \eqref{aug8a} ({{\bf a}}) remains the same, the conditions ({{\bf b}}), ({{\bf c}}) become $\dim \sD_{Y}\ge n_-'$ and $\dim \sD_{Y^*}\ge n_+'$, respectively, and
\[
\ker G^*=\{0\},\; \ker M=\{0\},\;
X\in\bB(\sD_M,\sD_{G^*})\;\mbox{is an arbitrary contraction}.
\]

{\rm(2)} If $\wt S_{*0}=(\wt S_0)^*$ in $\cH_0$ (i.e., $\dom U_0=\sN_i'$ and $\dom U_{*0}=\sN_{-i}'$), then $U_{*0}=(U_0)^*,$
by  \cite[Lemma 3.3]{ArlCAOT2023} and \cite[Theorem 4.5]{Arl_arxiv2024} the equalities \eqref{juli24a} are valid. This equivalent to
 $$||U_0h||\le ||h||\;\;\forall h\in\sN_i'.$$
Our construction of $\wt U$ gives
$
\wt U=\begin{bmatrix} YV_i &0 \cr
0&U_0\end{bmatrix}:\begin{array}{l}\sL_{i}\\ \oplus\\
\sN'_i\end{array}\to \begin{array}{l}\sL_{-i}\\ \oplus\\
\sN'_{-i}\end{array}
$
with an arbitrary contraction $Y\in\bB(\sL_{-i})$ such that $\ker (I-Y)=\{0\}.$
\end{remark}
\section{Compressions of exit space extensions of densely defined closed symmetric operators}
The following assertion is an application of Theorem \ref{bef29ab} and Theorem \ref{mar11aaa} to exit space extensions of a densely defined closed symmetric operators.
\begin{theorem}\label{aug06a} cf. 
Let $\cS$ be a densely defined closed symmetric operator in the Hilbert space $\cH$.  Assume that $\cH$ is a subspace of a Hilbert space $\sH$, $\cH_1:=\sH\ominus\cH,$
$ \dim\cH_1=\infty$.
If $n_{\pm}=\infty$, then in $\sH$ there exist
\begin{enumerate}
\item selfadjoint extensions $\wt \cS$ of $\cS$ such that
\[
\dom \wt \cS\cap\cH=\dom\cS \;\mbox{\rm{and}}\; \dom \wt \cS\cap\cH_1=\{0\}.
\]
\item maximal dissipative extensions $\wt S$ such that
\[
\dom \wt \cS\cap\cH=\dom\wt \cS^*\cap \cH=\dom\cS\; \mbox{\rm{and}}\; \dom \wt \cS\cap\cH_1=\dom\wt \cS^*\cap\cH_1=\{0\}.
\]
\end{enumerate}
\end{theorem}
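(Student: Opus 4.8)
The plan is to transport everything into $\sH$, where, as recorded in the Introduction, $\cS$ is a non-densely defined regular symmetric operator with $\cH_0=\cdom\cS=\cH$, $\sL=\sH\ominus\cH=\cH_1$, $\sL_{\pm i}=\cH_1$, $V_{\pm i}=I_{\cH_1}$, $S_0=\cS$, and semi-deficiency subspaces $\sN'_{\pm i}$ that coincide with the deficiency subspaces $\sN_{\pm i}$ of $\cS$ in $\cH$ (so $n'_\pm=n_\pm=\infty$). Under this dictionary the requirement $\dom\wt\cS\cap\cH=\dom\cS$ is exactly $\wt\cS_0=\cS$ (an extension of the second kind), so both assertions reduce to producing extensions with the prescribed trivial compression and the extra property $\dom\wt\cS\cap\cH_1=\{0\}$ (and, in (2), also $\dom\wt\cS^*\cap\cH_1=\{0\}$). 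Thus the whole task is to choose the parameters $Y,M,G,X$ in the block forms \eqref{bef25b}, \eqref{mar11bbb} so as to annihilate these intersections.

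The computational core is the following pair of identities, obtained directly from the block form of $U$ and the decomposition $\sN_i(\sH)=\cH_1\oplus\sN_i$. Writing a generic element of $\dom\wt\cS=\dom\cS\dot+(I-U)\sN_i(\sH)$ as $f_S+(I-U)(f_i\oplus\f_i')$ with $f_i\in\cH_1$, $\f_i'\in\sN_i$, and demanding that its $\cH$-component vanish, the von Neumann directness of $\dom\cS^*=\dom\cS\dot+\sN_i\dot+\sN_{-i}$ (adjoint and deficiency subspaces taken in $\cH$, valid since $\cS$ is densely defined there) forces $f_S=0$, $\f_i'=0$ and $GD_Yf_i=0$. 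Hence
\[
\dom\wt\cS\cap\cH_1=(I-Y)\{f_i\in\cH_1:\ D_Yf_i\in\ker G\},
\]
and the symmetric computation applied to $U^*$ gives
\[
\dom\wt\cS^*\cap\cH_1=(I-Y^*)\{g\in\cH_1:\ D_{Y^*}g\in\ker M^*\}.
\]
Since $\ker(I-Y)=\ker(I-Y^*)=\{0\}$, both sides are trivial as soon as $\ker D_Y=\ker D_{Y^*}=\{0\}$ together with $\ker G=\{0\}$ and $\ker M^*=\{0\}$: then $D_Yf_i\in\ker G$ forces $D_Yf_i=0$, whence $f_i=0$, and likewise for the adjoint.

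For (1) I invoke Theorem \ref{bef29ab} with $\wt\cS_0=\cS$; the resulting selfadjoint extension has $U$ of the form \eqref{bef25b} with $M\in\bB(\sN_i,\sD_{Y^*})$, $G\in\bB(\sD_Y,\sN_{-i})$ unitary and $X=0$, subject to $\Omega_Y=\ran(I-Y)\cap\ran D_{Y^*}=\{0\}$ and $\dim\sD_Y=\dim\sD_{Y^*}=\infty$. By Remark \ref{juni22a} (through \cite[Corollary 6.2]{Arl_arxiv2024} and Proposition \ref{apr7a}) there is a contraction $Y\in\bB(\cH_1)$ with $\ker D_Y=\ker D_{Y^*}=\{0\}$ and $\Omega_Y=\{0\}$; here $\ker D_Y=\{0\}$ already yields $\ker(I-Y)=\{0\}$ and, via $\cran D_Y=\cran D_{Y^*}=\cH_1$, the infinite-dimensionality of $\sD_Y,\sD_{Y^*}$ needed for $M,G$ unitary. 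As $G$ is unitary, $\ker G=\{0\}$, so the first displayed formula gives $\dom\wt\cS\cap\cH_1=(I-Y)\ker D_Y=\{0\}$; selfadjointness then supplies the $\cH_1$-statement for $\wt\cS^*=\wt\cS$ free of charge. For (2) I retain the same $Y$ but force $U$ to be a proper contraction, hence non-selfadjoint: take $M\in\bB(\sN_i,\sD_{Y^*})$ and $G\in\bB(\sD_Y,\sN_{-i})$ injective, non-isometric with dense range (e.g. $M=c_1W_1$, $G=c_2W_2$ with $W_1,W_2$ unitary, $0<c_1,c_2<1$), so $\ker M=\ker M^*=\ker G=\ker G^*=\{0\}$, and let $X\in\bB(\sD_M,\sD_{G^*})$ be arbitrary. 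Then $U$ in \eqref{mar11bbb} is an admissible contraction (admissibility being $\ker(I-Y)=\{0\}$), so it determines a maximal dissipative $\wt\cS$. The range conditions in Theorem \ref{mar11aaa}(I),(II) are automatic because $D_{Y^*}(\ran M)\subseteq\ran D_{Y^*}$ and $D_Y(\ran G^*)\subseteq\ran D_Y$ give $\ran(I-Y)\cap D_{Y^*}(\ran M)\subseteq\Omega_Y=\{0\}$ and $\ran(I-Y^*)\cap D_Y(\ran G^*)\subseteq\Omega_{Y^*}=\{0\}$; with $\ker M=\ker G^*=\{0\}$ this gives $\wt\cS_0=\wt\cS_{*0}=\cS$, i.e. $\dom\wt\cS\cap\cH=\dom\wt\cS^*\cap\cH=\dom\cS$, while the two displayed identities give $\dom\wt\cS\cap\cH_1=\dom\wt\cS^*\cap\cH_1=\{0\}$.

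The main obstacle is the two intersection identities above: extracting them cleanly from the block form demands careful use of the von Neumann directness \emph{in $\cH$} (not in $\sH$) and attention to which terms of $(I-U)(f_i\oplus\f_i')$ land in $\sN_{-i}$ rather than in $\cH_1$. Once these are in hand, the remainder is the purely existential input of a single contraction $Y$ on the infinite-dimensional $\cH_1$ with $\ker D_Y=\ker D_{Y^*}=\{0\}$ and $\Omega_Y=\{0\}$, which is precisely the configuration furnished by Remark \ref{juni22a}.
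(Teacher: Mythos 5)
Your proposal is correct and follows essentially the same route as the paper: transport $\cS$ to $\sH$ (where $\sL_{\pm i}=\cH_1$, $V_{\pm i}=I_{\cH_1}$, and the deficiency subspaces of $\cS$ in $\cH$ become its semi-deficiency subspaces), choose $Y$ via Remark \ref{juni22a} with $\ker D_Y=\{0\}$ and $\ran(I-Y)\cap\ran D_{Y^*}=\{0\}$, take $M,G$ unitary (resp.\ injective with injective adjoints) in the block form to get $\dom\wt\cS\cap\cH=\dom\cS$ from Theorem \ref{bef29ab} (resp.\ Theorem \ref{mar11aaa}), and kill $\dom\wt\cS\cap\cH_1$ by the von Neumann directness of $\dom\cS^*=\dom\cS\dot+\cN_i\dot+\cN_{-i}$ in $\cH$. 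Your explicit identities $\dom\wt\cS\cap\cH_1=(I-Y)\{f:D_Yf\in\ker G\}$ and its adjoint counterpart are just a cleanly packaged version of the inline computation the paper performs.
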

\begin{proof} The operator $\cS$ is a non-densely defined and regular symmetric in $\sH$, the deficiency subspace $\cN_\lambda$ of $\cS$ in $\cH$ becomes the semi-deficiency subspace of $\cS$ in $\sH$, $\sH\ominus\ran (\cS-\bar\lambda I)=\cN_\lambda\oplus\cH_1$. For the subspaces $\sL_\lambda$ and the isometric operators $V_\lambda$ defined in \eqref{gjlghjcn} and \eqref{forbis} we have the equalities $\sL_\lambda= \cH_1$ and $V_\lambda=I_{\cH_1}$ for all $\lambda\in \dC\setminus\dR$.

Take a contraction $\cY$ in $\cH_1$ such that (see Remark \ref{juni22a}):
\[
\ker D_{\cY}=\{0\},\;\; \ran (I-\cY)\cap \ran D_{\cY^*}=\{0\}.
\]
Then $\sD_{\cY}=\sD_{\cY^*}=\cH_1.$

(1) Let 
\begin{equation}\label{aug8f}
\wt\cU=\begin{bmatrix} \cY &D_{\cY^*}\cM \cr
\cG D_{\cY}&-\cG\cY^*\cM\end{bmatrix}: \begin{array}{l}\\\cH_1\\\oplus\\
\cN_i\end{array}\to\begin{array}{l}\cH_1\\\oplus\\\cN_{-i}\end{array}
\end{equation}
be a unitary operator with entries having the properties:
\begin{itemize}
\item $\cG$ is a unitary operator from $\cH_1$ onto $\cN_i$,
\item $\cM$ is a unitary operator from $\cN_{-i}$ onto $\cH_1$.
\end{itemize}
Define a selfadjoint extension $\wt \cS$ in $\sH=\cH\oplus\cH_1$:
\[\begin{array}{l}
\dom \wt\cS=\dom \cS\dot+(I-\wt\cU)(\cH_1\oplus\cN_i).
\end{array}
\]
By Theorem \ref{bef29ab} we have $\dom\wt \cS\cap\cH=\dom \cS$. If $g\in\dom \wt S\cap\cH_1$, then
$$P_\cH g=f_{\cS}+f_i-\cG D_{\cY} f_1+\cG\cY^*\cM f_i =0,\; f_\cS\in\dom\cS, f_1\in\cH_1, f_i\in\cN_i.$$
 Because $\dom \cS^*=\dom \cS\dot+\cN_i\dot+\cN_{-i}$
we get
\[
f_\cS=0,\; f_i=0,\; \cG D_{\cY}f_1-\cG\cY^*\cM f_i=0
\]
Due to properties of $\cG$ and $D_{\cY}$ we conclude that $f_1=0$ and, therefore, $g=0$.

(2) Let us take non-unitary contractions $\cM$ and $\cG$ such that
$$\ker \cM=\ker \cM^*=\ker \cG=\ker \cG^*=\{0\}.$$
Let $\wt \cU$ be a contractive matrix of the form \eqref{aug8f} and let $\wt\cS$ be the corresponding maximal dissipative extension of $\cS$ in $\sH.$
Then from items (3) and (4) of Theorem \ref{mar11aaa} we conclude that $\dom \wt \cS\cap\cH=\dom\wt \cS^*\cap \cH=\dom\cS$.

Arguing further as above in (1), we arrive at the equalities
$$\dom \wt \cS\cap\cH_1=\dom\wt \cS^*\cap\cH_1=\{0\}.$$

\end{proof}

\begin{remark}\label{gua29a}
Statement (1) of Theorem \ref{aug06a} was estabished by Naimark in \cite[Corollary 13]{Naimark3}.
\end{remark}
\section{Compressions of the Shtraus extensions and characteristic functions of symmetric operators}

Recall that the Shtraus extension $\wt S_z$ of a symmetric operator $S$ takes the form \eqref{slamb}.

\begin{proposition}\label{htuek1}
Let $S$ be a regular symmetric operator and let $z\in\wh\rho(S)$. Then
\begin{enumerate}
\item the operator $P_{\cH_0}\wt S_z$ is closed;
\item the compression $P_{\cH_0}\wt S_z\uphar(\dom \wt S_z\cap\cH_0)$ coincides with the Shtraus extension $(\wt {S_0})_z$ of $S_0$;
\item if $n_+'=0$ and $n_-'>0$, then $\dom \wt S_z\subset\dom \wt S^*_z=\dom S^*;$
\item if $n_+'>0$ and $n_-'=0$, then $\dom \wt S^*_z\subset\dom \wt S_z=\dom S^*$;
\item if $S_0$ is selfadjoint in $\cH_0$, then
$\dom \wt S_z=\dom \wt S^*_z=\dom S^*$, the operator $(\wt S_z+\wt S^*_z)/2$ is selfadjoint and the operator $(\wt S_z-\wt S^*_z)/2i$ is bounded and essentially selfadjoint.
\end{enumerate}
\end{proposition}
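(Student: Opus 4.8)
The plan is to prove the compression identity (2) first, because it also underlies the closedness in (1), and then to read (3)--(5) off the inclusion bullets of Proposition~\ref{apr7a}(3), with the genuine analysis concentrated in (5). For (2) I would write a generic element of $\dom\wt S_z=\dom S\dot+\sN_z$ as $f_S+\f_z$ with $f_S\in\dom S$, $\f_z\in\sN_z$. Since $\dom S\subset\cH_0$, membership $f_S+\f_z\in\cH_0$ forces $\f_z\in\sN_z\cap\cH_0$; and $\f_z\in\cH_0$ is equivalent to $\f_z\perp\sL$, hence to $\f_z\perp P_{\sN_z}\sL=\sL_z$, so $\sN_z\cap\cH_0=\sN_z\ominus\sL_z=\sN'_z$. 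Thus $\dom\wt S_z\cap\cH_0=\dom S\dot+\sN'_z$, and there $P_{\cH_0}\wt S_z(f_S+\f'_z)=P_{\cH_0}(Sf_S+z\f'_z)=S_0f_S+z\f'_z$ because $\f'_z\in\sN'_z\subset\cH_0$. As $\sN'_z$ is exactly the deficiency subspace of $S_0$ at $z$ and $\dom S_0=\dom S$, this is the action of the Shtraus extension $(\wt S_0)_z$, giving $(\wt S_z)_0=(\wt S_0)_z$.

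For (1), regularity makes $S_0$ closed, hence $(\wt S_0)_z$ closed, and by (2) this is the restriction of $P_{\cH_0}\wt S_z$ to $\dom\wt S_z\cap\cH_0$. For non-real $z$ I would invoke Remark~\ref{mar9bb}: here $\bar z\in\rho(\wt S_z)$ and $\wt S_z^*=\wt S_{\bar z}$ extends $S$, while by Proposition~\ref{apr7a}(1) the associated parameter is $M=Y_z\uphar\sN_z=0$, so $(M-V_z)\sL_z=V_z\sL_z=\sL_{\bar z}$ is a subspace by Theorem~\ref{ivekmzy}; condition (iii) of Remark~\ref{mar9bb} then gives that $P_{\cH_0}\wt S_z$ is closed. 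To cover every $z\in\wh\rho(S)$ uniformly (including real $z$, where $\wt S_z$ is selfadjoint) I would instead use the splitting $\dom\wt S_z=(\dom S\dot+\sN'_z)\dot+\sL_z$: regularity makes $P_{\sN_z}\uphar\sL$ bounded below, whence $P_\sL P_{\sN_z}\uphar\sL$ is a positive boundedly invertible operator on $\sL$ and $\cH=\cH_0\dot+\sL_z$ is a topological direct sum; on the first summand $P_{\cH_0}\wt S_z$ acts as the closed operator $(\wt S_0)_z$ and on $\sL_z$ as the bounded operator $z\,P_{\cH_0}\uphar\sL_z$, and along a topological splitting the sum of a closed and a bounded operator is closed.

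For (3)--(4) I would read off Proposition~\ref{apr7a}(3). If $n'_+=0$, $n'_->0$, the relevant bullet gives $\sN_z=\sD_{Y_z}=\ran D_{Y_z}\subset\ran(I-Y_z^*)=\dom\wt S_z^*$; with $\dom S\subset\dom\wt S_z^*$ this yields $\dom\wt S_z=\dom S\dot+\sN_z\subset\dom\wt S_z^*$. Moreover $n'_+=0$ means $\sN_z=\sL_z$, and by \eqref{gthtctx1} every vector $P_{\sN_z}h\in\sL_z$ lies in $\dom S+\sN_{\bar z}$, so $\dom S^*=\dom S+\sN_z+\sN_{\bar z}=\dom S\dot+\sN_{\bar z}=\dom\wt S_z^*$; this is (3), and (4) is its mirror image under $Y_z\leftrightarrow Y_z^*$, $z\leftrightarrow\bar z$. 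For the domains in (5), $S_0$ selfadjoint means $n'_\pm=0$, so the first bullet of Proposition~\ref{apr7a}(3) supplies both $\sN_{\bar z}\subset\dom\wt S_z$ and $\sN_z\subset\dom\wt S_z^*$, whence $\dom\wt S_z=\dom\wt S_z^*=\dom S^*$ as above.

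It remains to treat the operator claims in (5), and this is where I expect the main obstacle. Evaluating the form on $u=f_S+\f_z$, the cross terms carry no imaginary part (since $(S-\bar zI)f_S\in\sM_{\bar z}\perp\sN_z$ forces $(Sf_S,\f_z)=\bar z(f_S,\f_z)$), leaving $\IM(\wt S_z u,u)=\IM z\,\|\f_z\|^2$. The crux is the quantitative bound $\|\f_z\|\le C\|u\|$: because $P_\sL u=P_\sL\f_z$ and $P_\sL P_{\sN_z}\uphar\sL$ is boundedly invertible on $\sL$ (its form equals $\|P_{\sN_z}h\|^2\ge c^2\|h\|^2$ by regularity), one recovers $\f_z$ boundedly from $P_\sL u$. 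Hence $B:=(\wt S_z-\wt S_z^*)/2i$ has a bounded nonnegative form, so it is a bounded operator, and being symmetric and densely defined it is essentially selfadjoint. Finally $A:=(\wt S_z+\wt S_z^*)/2=\wt S_z-iB$ is closed and symmetric on $\dom S^*$; for $s>\|B\|$ the identity $A\mp isI=(\wt S_z\mp isI)-iB$ with the maximal-dissipative resolvent bound $\|(\wt S_z\mp isI)^{-1}\|\le 1/s$ makes $A\mp isI$ surjective by a Neumann-series fixed point, so $\ker(A^*\mp isI)=\{0\}$, both deficiency indices vanish, and $A$ is selfadjoint. Proving the boundedness of $\IM\wt S_z$ (rather than mere nonnegativity) and deducing selfadjointness of the real part from it are the steps that genuinely require the regularity estimate and the resolvent argument; assertions (2)--(4) and the closedness (1) are bookkeeping on top of Proposition~\ref{apr7a}, Remark~\ref{mar9bb} and the regularity of $S$.
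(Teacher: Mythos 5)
Your treatment of (2) coincides with the paper's: both reduce to $\dom\wt S_z\cap\cH_0=\dom S\dot+(\sN_z\cap\cH_0)=\dom S\dot+\sN_z'$ and read off the action of $(\wt S_0)_z$. Elsewhere you diverge. For (1) the paper argues sequentially: from $P_{\cH_0}\wt S_z(f_S+\f_z)-z(f_S+\f_z)=(S_0-zI)f_S-zP_{\sL}\f_z$ and the orthogonality of $\cH_0$ and $\sL$ it extracts convergence of $\{f_S^{(n)}\}$ and $\{\f_z^{(n)}\}$ separately; your two alternatives (Remark~\ref{mar9bb} with parameter $M=Y_z\uphar\sN_z=0$, or the topological splitting $\cH=\cH_0\dot+\sL_z$ obtained from the bounded invertibility of $P_{\sL}P_{\sN_z}\uphar\sL$) are both sound and more structural, and the splitting is exactly what later yields the estimate $\|\f_z\|\le C\|u\|$. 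Items (3)--(4) are in substance the paper's one-line deduction from $\dom\wt S_z\cap\dom\wt S_z^*=\dom S\dot+\sL_z=\dom S\dot+\sL_{\bar z}$. For (5) the paper only cites \cite[Theorem 4.1.12]{ArlBelTsek2011}, whereas you give a self-contained proof: $\IM(\wt S_zu,u)=\IM z\,\|\f_z\|^2$ with $\f_z$ recovered boundedly from $P_{\sL}u$ by regularity, hence the imaginary part $B$ has a bounded symmetric form and is a bounded essentially selfadjoint operator, and the real part is handled by a bounded perturbation of the resolvent. That is a genuine, and welcome, replacement of the external reference.

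One step of your argument for (5) fails as written: the bound $\|(\wt S_z-isI)^{-1}\|\le 1/s$ is not available. For a maximal dissipative operator only the open lower half-plane is guaranteed to lie in the resolvent set, and $isI$ with $s>0$ sits in the upper half-plane, where $\wt S_z$ does have spectrum (indeed $\wt S_z\f_z=z\f_z$ for $\f_z\in\sN_z$, so $z$ itself is an eigenvalue). The repair is immediate with tools you already use: since $\wt S_z=A+iB$ and $\wt S_z^*=A-iB$ on $\dom S^*$, write $A-isI=(\wt S_z^*-isI)+iB$ and use $\|(\wt S_z^*-isI)^{-1}\|\le 1/s$, valid because $-\wt S_z^*$ is maximal dissipative; the Neumann series then gives surjectivity of $A-isI$ for $s>\|B\|$, and together with the surjectivity of $A+isI$ obtained from $\wt S_z$ this proves selfadjointness of $A$. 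With that one-line correction the whole proposal is correct.
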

\begin{proof} (1) For $f_S\in\dom S$ and $\f_z\in \sN_z$ we have
\[
P_{\cH_0}\wt S_z(f_S+\f_z)-z(f_S+\f_z)=(S_0-z I)f_S-zP_{\sL}\f_z
\]
Hence, if
\[
\lim\limits_{n\to\infty}(f_S^{(n)}+\f_z^{(n)})=f,\; \lim\limits_{n\to\infty}P_{\cH_0}\wt S_z(f_S^{(n)}+\f_z^{(n)})=h,
\]
then the sequence $\{(S_0-z I)f_S^{(n)}\}$ converges. This implies that the sequence $\{f_S^{(n)}\}$ converges and therefore, the sequences
$
\{S_0f_S^{(n)}\}$ and $ \{\f_z^{(n)}\}$ converge. Because the operator $S_0$ is closed and $\sN_z$ is a subspace we conclude that the vector
$g:=\lim\limits_{n\to \infty}f_S^{(n)}$ belongs to $\dom S_0(=\dom S)$, the vector  $\f_z:=\lim\limits_{n\to \infty}\f_z^{(n)}$ belongs to $\sN_z$,
and
$$
f=\lim\limits_{n\to\infty}(f_S^{(n)}+\f_z^{(n)})=g+\f_z\in\dom \wt S_z,\;
h= \lim\limits_{n\to\infty}P_{\cH_0}\wt S_z(f_S^{(n)}+\f_z^{(n)})= P_{\cH_0}\wt S_z(g+\f_z),$$
 i.e., the operator $P_{\cH_0}\wt S_z$ is closed.

(2) Clearly
$$(\dom S\dot+\sN_z)\cap\cH_0=\dom S_0\dot+(\sN_z\cap\cH_0)=\dom S_0\dot+\sN_z'.$$
Hence
$\dom \wt S_z\cap\cH_0=\dom (\wt {S_0})_z$ and $(\wt {S_0})_z=P_{\cH_0}\wt S_z\uphar\cH_0.$

(3) and (4) are consequences of the equality (see\eqref{rhfc11})
$$ \dom\wt S_z\cap\dom \wt S^*_z=\dom S\dot+\sL_z=\dom S\dot+\sL_{\bar z}.$$

(5) Because $S_0$ is selfadjoint in $\cH_0$ and the operators $P_{\cH_0}\wt S_z$ and $P_{\cH_0}\wt S^*_z$ are closed, by \cite[Theorem 4.1.12]{ArlBelTsek2011}
the statements in (5) hold.
\end{proof}

In the next proposition we establish connections between characteristic functions of symmetric operators $S$ and $S_0$.
\begin{proposition}\label{apr10bb}
Let $S$ be a non-densely defined regular symmetric operator with non-zero semi-deficiency indices, let $C_{\lambda}^S(z)\in\bB(\sN_{\lambda},\sN_{{\bar\lambda}})$ and
$C_{\lambda}^{S_0}(z)\in\bB(\sN_{\lambda}',\sN_{{\bar\lambda}}')$ be the characteristic functions of $S$ and $S_0$, respectively, $\lambda, z\in\dC_+$ (see Section \ref{jul11}).

Represent the function $C_{\lambda}^S(z)$ in the block-matrix form
\[
C_{\lambda}^S(z)=\begin{bmatrix}A^{(\lambda)}_{11}(z)&A^{(\lambda)}_{12}(z)\cr A^{(\lambda)}_{21}(z)&A^{(\lambda)}_{22}(z)  \end{bmatrix} :\begin{array}{l}\sL_\lambda\\\oplus\\\sN_\lambda'\end{array}\to \begin{array}{l}\sL_{\bar\lambda}\\\oplus\\\sN_{\bar\lambda}'\end{array},
\]
where
\[
\begin{array}{l}
A^{(\lambda)}_{11}(z)= P_{\sL_{\bar\lambda}}C_{\lambda}^S(z)\uphar\sL_\lambda,\; A^{(\lambda)}_{12}(z)= P_{\sL_{\bar\lambda}}C_{\lambda}^S(z)\uphar\sN'_\lambda,\\[2mm]
A^{(\lambda)}_{21}(z)= P_{\sN'_{\bar\lambda}}C_{\lambda}^S(z)\uphar\sL_\lambda,\; A^{(\lambda)}_{22}(z)= P_{\sN_{\bar\lambda}'}C_{\lambda}^S(z)\uphar\sN'_\lambda,\;
\lambda\in\dC_+.
\end{array}
\]
Then  $C_{\lambda}^{S_0}(z)$ is the Schur complement of the block operator-matrix function
$$C_{\lambda}^S(z)-V_{\lambda}P_{\sL_{\lambda}}=\begin{bmatrix}A^{(\lambda)}_{11}(z)-V_{\lambda}&A^{(\lambda)}_{12}(z)\cr A^{(\lambda)}_{21}(z)&A^{(\lambda)}_{22}(z)  \end{bmatrix} :\begin{array}{l}\sL_\lambda\\\oplus\\\sN_\lambda'\end{array}\to \begin{array}{l}\sL_{\bar\lambda}\\\oplus\\\sN_{\bar\lambda}'\end{array},$$
i.e., the following relations for any number $z\in\dC_+$ are valid:
\begin{equation}\label{apr15a}
\begin{array}{l}
C_{\lambda}^{S_0}(z)=A^{(\lambda)}_{22}(z)-A^{(\lambda)}_{21}(z)(A^{(\lambda)}_{11}(z)-V_{\lambda})^{-1}A^{(\lambda)}_{12}\\
=\biggl(P_{\sN_{\bar\lambda}'}C_{\lambda}^S(z)-
P_{\sN'_{\bar\lambda}}C_{\lambda}^S(z)(P_{\sL_{\bar\lambda}}C_{\lambda}^S(z)-V_{\lambda})^{-1}P_{\sL_{\bar\lambda}}C_{\lambda}^S(z)\biggr)\uphar\sN'_\lambda,
\end{array}
\end{equation}
\begin{multline}\label{apr15b}
\qquad \qquad C_{\lambda}^S(z)-V_{\lambda}P_{\sL_{\lambda}}=\\[2mm]
\qquad=\begin{bmatrix}I_{\sL_{\bar\lambda}}&0\cr A^{(\lambda)}_{21}(z)(A^{(\lambda)}_{11}(z)-V_{\lambda})^{-1}&I_{\sN_{\bar\lambda}'}\end{bmatrix}\begin{bmatrix} A^{(\lambda)}_{11}(z)-V_{\lambda}&0\cr 0&C_{\lambda}^{S_0}(z)\end{bmatrix}\begin{bmatrix}I_{\sL_{\lambda}}&(A^{(\lambda)}_{11}(z)-V_{\lambda})^{-1}A^{(\lambda)}_{12}(z)\cr 0&I_{\sN_\lambda'}  \end{bmatrix}. 
\end{multline}
\end{proposition}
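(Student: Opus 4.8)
The plan is to realize $C_\lambda^{S_0}(z)$ as the Shtraus parameter of the compression of the Shtraus extension $\wt S_z$ of $S$, and then to invoke the fact — established for compressions of maximal dissipative extensions — that the compression parameter is precisely the Schur complement of the block matrix of the original parameter relative to the splittings $\sN_\lambda=\sL_\lambda\oplus\sN'_\lambda$ and $\sN_{\bar\lambda}=\sL_{\bar\lambda}\oplus\sN'_{\bar\lambda}$. Concretely, fix $z,\lambda\in\dC_+$ and record three structural facts. By \cite{Straus1968} the Shtraus extension $\wt S_z$ is maximal dissipative; by Proposition~\ref{htuek1}(1) it is regular (the operator $P_{\cH_0}\wt S_z$ is closed); and by \eqref{kbdinh12} its Shtraus parameter in the representation \eqref{ljghfci} (for the fixed $\lambda$) is exactly $M=C_\lambda^S(z)$, since $\dom\wt S_z=\dom S\dot+(I-C_\lambda^S(z))\sN_\lambda$. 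On the compression side, Proposition~\ref{htuek1}(2) identifies $(\wt S_z)_0:=P_{\cH_0}\wt S_z\uphar(\dom\wt S_z\cap\cH_0)$ with the Shtraus extension $(\wt{S_0})_z$ of $S_0$, whose Shtraus parameter relative to the deficiency subspace $\sN'_\lambda$ of $S_0$ is, by the very definition of the characteristic function of $S_0$, the operator $C_\lambda^{S_0}(z)$.

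The analytic input that makes the Schur complement genuine (rather than forcing a pseudoinverse) is the strict contractivity of $C_\lambda^S(z)$: since $z,\lambda\in\dC_+$ one has $\left|(z-\lambda)/(z-\bar\lambda)\right|<1$, so by \eqref{31jul} the operator $C_\lambda^S(z)$, and hence its corner $A^{(\lambda)}_{11}(z)=P_{\sL_{\bar\lambda}}C_\lambda^S(z)\uphar\sL_\lambda$, is a strict contraction. Writing $A^{(\lambda)}_{11}(z)-V_\lambda=(A^{(\lambda)}_{11}(z)V_{\bar\lambda}-I_{\sL_{\bar\lambda}})V_\lambda$ and noting that $A^{(\lambda)}_{11}(z)V_{\bar\lambda}$ is a strict contraction in $\sL_{\bar\lambda}$ while $V_\lambda$ is unitary from $\sL_\lambda$ onto $\sL_{\bar\lambda}$, I conclude that $A^{(\lambda)}_{11}(z)-V_\lambda$ is boundedly invertible with range all of $\sL_{\bar\lambda}$. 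In the notation of the Introduction this says $I-Y$ is invertible for $Y=A^{(\lambda)}_{11}(z)V_{\bar\lambda}$, so the admissibility condition $\ker(I-Y)=\{0\}$ holds and, via \eqref{juni30aa}, $\wt S_z$ is regular, in agreement with Proposition~\ref{htuek1}(1); moreover the compression domain is forced to be all of $\sN'_\lambda$.

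It then remains to apply the compression formula. The expressions displayed in the Introduction and derived in Remark~\ref{mar23a} give the compression parameter $U_0$ of a maximal dissipative extension as $U_{22}-U_{21}(U_{11}-V_\lambda)^{-1}U_{12}$ on $\dom U_0=\{\f'\in\sN'_\lambda:U_{12}\f'\in\ran(U_{11}-V_\lambda)\}$. Since these are stated for $\lambda=i$, I would transport them to general $\lambda=a+ib\in\dC_+$ through the affine change $S_{a,b}=b^{-1}(S-aI)$ of Remark~\ref{pes12a}, under which $C_\lambda^S(z)=C_i^{S_{a,b}}(b^{-1}(z-a))$, $(S_{a,b})_0=(S_0)_{a,b}$, and the data $\sL_\lambda,V_\lambda,\sN'_\lambda$ of $S$ become $\sL_i,V_i,\sN'_i$ of $S_{a,b}$. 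Taking $U=C_\lambda^S(z)$ and using the invertibility of $U_{11}-V_\lambda$ from the previous step (so that $\ran(U_{11}-V_\lambda)=\sL_{\bar\lambda}$ and $\dom U_0=\sN'_\lambda$), the compression parameter becomes the genuine Schur complement $A^{(\lambda)}_{22}(z)-A^{(\lambda)}_{21}(z)(A^{(\lambda)}_{11}(z)-V_\lambda)^{-1}A^{(\lambda)}_{12}(z)$. Matching this against $U_0=C_\lambda^{S_0}(z)$ — legitimate by the uniqueness of the Shtraus parameter in \eqref{ljghfci} applied to the equality $(\wt{S_0})_z=(\wt S_z)_0$ — yields \eqref{apr15a}, and the block factorization \eqref{apr15b} is then the standard $LDU$ decomposition associated with an invertible $(1,1)$ corner.

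The step I expect to be the main obstacle is the bookkeeping in the reduction to $\lambda=i$: one must verify that the compression operation commutes with the affine transformation $S\mapsto S_{a,b}$ (using $\dom S\subset\cH_0$, so $P_{\cH_0}\uphar\dom S=I\uphar\dom S$ and thus $(S_{a,b})_0=(S_0)_{a,b}$), and that the subspaces $\sL_\lambda,\sN'_\lambda$, the isometries $V_\lambda$, and the characteristic functions of both $S$ and $S_0$ transform compatibly, so that the $\lambda=i$ Schur-complement identity transports verbatim to every $\lambda\in\dC_+$. Everything else is either a direct quotation of Proposition~\ref{htuek1} or the elementary invertibility argument of the second paragraph.
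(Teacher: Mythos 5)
Your proposal is correct and follows essentially the same route as the paper: identify $(\wt S_z)_0$ with $(\wt{S_0})_z$ via Proposition \ref{htuek1}, observe that $A^{(\lambda)}_{11}(z)-V_\lambda$ is boundedly invertible (the paper gets this from the strict norm bound \eqref{31jul} together with regularity, you via the Neumann series for the strict contraction $A^{(\lambda)}_{11}(z)V_{\bar\lambda}$ — the same substance), and then read off $C^{S_0}_\lambda(z)$ as the Schur complement from the general compression formulas of Proposition \ref{mar11a} and Theorem \ref{mar11aaa}. Your explicit reduction of the $\lambda=i$ formulas to general $\lambda$ via Remark \ref{pes12a} is a point the paper leaves implicit, but it is routine bookkeeping and does not change the argument.
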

\begin{proof}

 Recall that by Proposition \ref{htuek1}  the operator $\wt S_z$ is a regular extension of $S$. Therefore, the linear manifold $(C^{(S)}_{\lambda}(z)-V_\lambda)\sL_\lambda$ is a subspace (see Remark \ref{mar9bb}).
Besides, the relations \eqref{apr9bb} and \eqref{31jul} for $C^{S}_{\lambda}(\cdot)$ hold.
It follows that the operator $(P_{\sL_\lambda}C^{S}_{\lambda}(z)-V_\lambda)\uphar\sL_\lambda=A^{(\lambda)}_{11}(z)-V_{\lambda}$ has bounded inverse defined on $\sL_\lambda$ for all $z\in\dC_+$.

Since the operator $\wt S_z$ is maximal dissipative extension of $S$,
\[
\dom\wt S_z=\dom S\dot+(I-C^{S}_{\lambda}(z))\sN_\lambda,
\]
and $P_{\cH_0}\wt S_z\uphar(\dom\wt S_z\cap\cH_0)=(\wt {S_0})_z$ (by Proposition \ref{htuek1}),
\[
\dom (\wt {S_0})_z=\dom S_0\dot+\sN_z'=\dom S_0\dot +(I-C_{\lambda}^{S_0}(z))\sN_\lambda',
\]
in order to get \eqref{apr15a} and \eqref{apr15b} we can apply Proposition \ref{mar11a}, Theorem \ref{mar11aaa} (statements (2) and (8)).
\end{proof}

\end{document}